\newtheorem{theorem}{Theorem}
\newtheorem{remark}[theorem]{Remark}
\newtheorem{lemma}[theorem]{Lemma}
\newtheorem{proposition}[theorem]{Proposition}
\newcommand{\N}{\mathbb{N}}
\newcommand{\R}{\mathbb{R}}
\renewcommand{\l}{\left}
\renewcommand{\r}{\right}
\numberwithin{theorem}{section}
\numberwithin{equation}{section}
\title[Degenerate Kirchhoff problems]{Degenerate Kirchhoff problems with nonlinear Neumann boundary condition}
\author[F. Borer]{Franziska Borer}
\address[F. Borer]{Technische Universit\"{a}t Berlin, Institut f\"{u}r Mathematik, Stra\ss e des 17. Juni 136, 10623 Berlin, Germany}
\email{borer@math.tu-berlin.de}
\author[M.T.O. Pimenta]{Marcos T.O. Pimenta}
\address[M.T.O. Pimenta]{Departamento de Matem\'atica e Computa\c{c}\~ao, Universidade Estadual Paulista - Unesp, CEP: 19060-900, Presidente Prudente - SP, Brazil}
\email{marcos.pimenta@unesp.br}
\author[P. Winkert]{Patrick Winkert}
\address[P. Winkert]{Technische Universit\"{a}t Berlin, Institut f\"{u}r Mathematik, Stra\ss e des 17.\,Juni 136, 10623 Berlin, Germany}
\email{winkert@math.tu-berlin.de}
\subjclass{35A01, 35J20, 35J25, 35J57, 35J62, 35J66}
\keywords{Constraint set, degenerate Kirchhoff problem, least energy sign-changing solution, nodal ground state solution, nonlocal term}
\begin{document}
\begin{abstract}
	In this paper we consider degenerate Kirchhoff-type equations of the form
	\begin{equation*}
		\begin{aligned}
			-\phi(\Xi(u)) \l(\mathcal{A}(u)-|u|^{p-2}u\r) & = f(x,u)\quad &  & \text{in } \Omega,\\
			\phi (\Xi(u)) \mathcal{B}(u) \cdot \nu &= g(x,u)          &  & \text{on } \partial\Omega,
		\end{aligned}
	\end{equation*}
	where $\Omega\subseteq \R^N$, $N\geq 2$, is a bounded domain with Lipschitz boundary $\partial\Omega$, $\mathcal{A}$ denotes the double phase operator given by
	\begin{align*}
		\mathcal{A}(u)=\operatorname{div} \left(|\nabla u|^{p-2}\nabla u + \mu(x) |\nabla u|^{q-2}\nabla u \right)\quad \text{for }u\in W^{1,\mathcal{H}}(\Omega),
	\end{align*}
	$\nu(x)$ is the outer unit normal of $\Omega$ at $x \in \partial\Omega$,
	\begin{equation*}
		\begin{aligned}
			\mathcal{B}(u)&=|\nabla u|^{p-2}\nabla u + \mu(x) |\nabla u|^{q-2}\nabla u,\\
			\Xi(u)&= \int_\Omega \left(\frac{|\nabla u|^p+|u|^p}{p}+\mu(x) \frac{|\nabla u|^q}{q}\right)\,\mathrm{d} x,
		\end{aligned}
	\end{equation*}
	 $1<p<N$, $p<q<p^*=\frac{Np}{N-p}$, $0 \leq \mu(\cdot)\in L^\infty(\Omega)$, $\phi(s) = a + b s^{\zeta-1}$ for $s\in\R$ with $a \geq 0$, $b>0$ and $\zeta \geq 1$, and $f\colon\Omega\times\R\to\R$, $g\colon\partial\Omega\times\R\to\R$ are Carath\'{e}odory functions that grow superlinearly and subcritically. We prove the existence of a nodal ground state solution to the problem above, based on variational methods and minimization of the associated energy functional $\mathcal{E}\colon W^{1,\mathcal{H}}(\Omega) \to\R$ over the constraint set
	\begin{align*}
		\mathcal{C}=\Big\{u \in  W^{1,\mathcal{H}}(\Omega)\colon u^{\pm}\neq 0,\, \l\langle \mathcal{E}'(u),u^+ \r\rangle= \l\langle \mathcal{E}'(u),-u^- \r\rangle=0 \Big\},
	\end{align*}
	whereby $\mathcal{C}$ differs from the well-known nodal Nehari manifold due to the nonlocal character of the problem.
\end{abstract}

\maketitle

\section{Introduction and main results}

Let $\Omega\subseteq \R^N$, $N\geq 2$, be a bounded domain with a Lipschitz boundary $\partial\Omega$. In this work we study the following degenerate Kirchhoff problem with a nonlinear Neumann boundary condition of the shape

\begin{equation}
	\label{problem}
	\begin{aligned}
		-\phi(\Xi(u)) \l(\mathcal{A}(u)-|u|^{p-2}u\r) & = f(x,u)\quad &  & \text{in } \Omega,\\
		\phi (\Xi(u)) \mathcal{B}(u) \cdot \nu &= g(x,u)          &  & \text{on } \partial\Omega,
	\end{aligned}
\end{equation}
where
\begin{align}\label{operator_double_phase}
	\mathcal{A}(u)=\operatorname{div} \left(|\nabla u|^{p-2}\nabla u + \mu(x) |\nabla u|^{q-2}\nabla u \right)\quad \text{for }u\in W^{1,\mathcal{H}}(\Omega)
\end{align}
is the double phase operator, $W^{1,\mathcal{H}}(\Omega)$ denotes the Musielak-Orlicz Sobolev space (for a precise definition see Section \ref{Section_2}),
$\nu(x)$ is the outer unit normal of $\Omega$ at $x \in \partial\Omega$,
\begin{equation}\label{new-16}
	\begin{aligned}
		\mathcal{B}(u)&=|\nabla u|^{p-2}\nabla u + \mu(x) |\nabla u|^{q-2}\nabla u,\\
		\Xi(u)&= \int_\Omega \left(\frac{|\nabla u|^p+|u|^p}{p}+\mu(x) \frac{|\nabla u|^q}{q}\right)\,\mathrm{d} x,
	\end{aligned}
\end{equation}
$1<p<N$, $p<q<p^*=\frac{Np}{N-p}$, $0 \leq \mu(\cdot)\in L^\infty(\Omega)$, $\phi(s) = a + b s^{\zeta-1}$ for $s\in\R$ with $a \geq 0$, $b>0$ and $\zeta \geq 1$, and $f\colon\Omega\times\R\to\R$, $g\colon\partial\Omega\times\R\to\R$ are Carath\'{e}odory functions that grow superlinearly and subcritically. We refer to hypotheses \eqref{H3} for the exact conditions on $f$ and $g$.

We emphasize that problem \eqref{problem} combines several interesting phenomena. First, the occurring operator is the double phase operator given in \eqref{operator_double_phase} which is closely related to the energy functional
\begin{align}\label{integral_minimizer}
	J(u)= \int_\Omega \big(|\nabla  u|^p+\mu(x)|\nabla  u|^q\big)\,\mathrm{d} x.
\end{align}
Functionals of the form \eqref{integral_minimizer} were first mentioned by Zhikov \cite{Zhikov-1986} in 1986 in order to characterize models for strongly anisotropic materials in the context of homogenization and elasticity. It also occurs in the study of duality theory and of the Lavrentiev gap phenomenon, see the works of Zhikov \cite{Zhikov-1995,Zhikov-2011}. From the mathematical point of view, first regularity properties of local minimizers of functionals like \eqref{integral_minimizer} have been proved in the papers by Baroni-Colombo-Mingione \cite{Baroni-Colombo-Mingione-2015,Baroni-Colombo-Mingione-2018} and Colombo-Mingione \cite{Colombo-Mingione-2015a,Colombo-Mingione-2015b}. We also mention the pioneering works of Marcellini \cite{Marcellini-1991,Marcellini-1989} for integral functionals with nonstandard growth condition.

A second interesting phenomenon is the appearance of the nonlocal Kirchhoff term which is generated by the function $\phi(s) = a + b s^{\zeta-1}$ for $s\in\R$ with $a \geq 0$, $b>0$ and $\zeta \geq 1$. Problems of this type go back to a model which was first presented by Kirchhoff \cite{Kirchhoff-1883} in 1883. He proposed the model problem
\begin{align*}
	\rho \frac{\partial^2 u}{\partial t^2}-\left(\frac{\rho_0}{h}+\frac{E}{2L}\int_0^L \left|\frac{\partial u}{\partial x}\right|^2 \,\mathrm{d} x\right)\frac{\partial^2 u}{\partial x^2}=0,
\end{align*}
which is a generalization of the D'Alembert equation. It should be pointed out that problem \eqref{problem} generalizes several models which describe interesting phenomena studied on mathematical physics. Since the constant $a$ in the Kirchhoff function could be zero, problem \eqref{problem} is called degenerate which creates the most interesting models in the applications. Indeed, if $p=2$ and $\mu(x)\equiv 0$, the transverse oscillations of a stretched string with nonlocal flexural rigidity depends continuously on the Sobolev deflection norm of $u$ via $\phi(\int_\Omega |\nabla u|^2\,\mathrm{d} x)$ meaning that $\phi(0)=0$ is nothing less than the base tension of the string is zero. Moreover, there is a large list of references dealing with different type of Kirchhoff problems. We just refer to some of the most famous ones, that is, Alves-Corr\^{e}a-Ma \cite{Alves-Correa-Ma-2005}, Autuori-Pucci-Salvatori \cite{Autuori-Pucci-Salvatori-2010}, D'Ancona-Spagnolo \cite{DAncona-Spagnolo-1992}, Figueiredo \cite{Figueiredo-2013},  Fiscella-Valdinoci \cite{Fiscella-Valdinoci-2014}, He-Zou \cite{He-Zou-2012}, Lions \cite{Lions-1978}, Mao-Zhang \cite{Mao-Zhang-2009}, Mingqi-R\u{a}dulescu-Zhang \cite{Mingqi-Radulescu-Zhang-2019}, Perera-Zhang \cite{Perera-Zhang-2006}, Pucci-Xiang-Zhang \cite{Pucci-Xiang-Zhang-2015}, Xiang-Zhang-R\u{a}dulescu \cite{Xiang-Zhang-Radulescu-2016}, see also the references therein.

Even though the literature for Kirchhoff problems is very large, it is still very manageable for Kirchhoff double phase settings. The first work in this field was written by Fiscella-Pinamonti \cite{Fiscella-Pinamonti-2023} who considered Kirchhoff-type problems of the style
\begin{align*}
	-m \left[\int_\Omega \left( \frac{|\nabla u|^p}{p} + \mu(x) \frac{|\nabla u|^q}{q}\right)\,\mathrm{d} x\right]\mathcal{A}(u) =f(x,u) \quad \text{in } \Omega,\quad u  = 0 \quad \text{on } \partial\Omega,
\end{align*}
whereby they suppose the Ambrosetti-Rabinowitz condition along with a subcritical growth on the perturbation $f\colon \Omega\times\R\to\R$. Their existence result is mainly based on a mountain pass geometry of the problem. Singular Kirchhoff-type problems involving the double phase operator have been studied by Arora-Fiscella-Mukherjee-Win\-kert \cite{ Arora-Fiscella-Mukherjee-Winkert-2023} (see also \cite{Arora-Fiscella-Mukherjee-Winkert-2022} by the same authors in the critical case) in order to get positive solutions of the problem
\begin{align*}
	-m \left[\int_\Omega \left( \frac{|\nabla u|^p}{p} + \mu(x) \frac{|\nabla u|^q}{q}\right)\,\mathrm{d} x\right]\mathcal{A}(u) = \lambda u^{-\gamma} +u^{r-1} \quad \text{in } \Omega,
	\quad u\mid_{\partial\Omega} = 0
\end{align*}
by using the Nehari manifold and minimization arguments. Other results can be found in the papers of Cen-Vetro-Zeng \cite{Cen-Vetro-Zeng-2023}, Cheng-Bai \cite{Cheng-Bai-2024}, Ho-Winkert \cite{Ho-Winkert-2023}, see also Isernia-Repov\v{s} \cite{Isernia-Repovs-2021} for $(p,q)$-problems in the whole space.

Finally, a third interesting phenomenon in problem \eqref{problem} is the appearance of a nonlinear Neumann boundary condition. Such a boundary condition makes the treatment of problem \eqref{problem} much more complicated. As far as we know there is just one paper in the direction of Kirchhoff double phase problems with a nonlinear boundary condition published by Fiscella-Marino-Pinamonti-Verzellesi \cite{Fiscella-Marino-Pinamonti-Verzellesi-2024} studying the problem
\begin{align*}
	-M \left[\int_\Omega \left( \frac{|\nabla u|^p}{p} + \mu(x) \frac{|\nabla u|^q}{q}\right)\,\mathrm{d} x\right]\mathcal{A}(u) &= h_1(x,u) \quad \text{in } \Omega,\\
	-M \left[\int_\Omega \left( \frac{|\nabla u|^p}{p} + \mu(x) \frac{|\nabla u|^q}{q}\right)\,\mathrm{d} x\right]\mathcal{B}(u)
	&=h_2(x,u)\quad \text{in }\partial\Omega
\end{align*}
with $\mathcal{B}$ as in \eqref{new-16}. The authors prove several existence results for different structures of $h_1$ and $h_2$ based on variational tools and a version of the fountain theorem. Further works for double phase problems with nonlinear Neumann boundary condition but without Kirchhoff term have been published by Guarnotta-Livrea-Winkert \cite{Guarnotta-Livrea-Winkert-2023}, Papageorgiou-R\u{a}dulescu-Repov\v{s} \cite{Papageorgiou-Radulescu-Repovs-2020}, Papageorgiou-Vetro-Vetro \cite{Papageorgiou-Vetro-Vetro-2021} and Zeng-R\u{a}dulescu-Winkert \cite{Zeng-Radulescu-Winkert-2022,Zeng-Radulescu-Winkert-2024}, see also the papers of Corr\^{e}a-Nascimento \cite{Correa-Nascimento-2009} and Dai-Ma \cite{Dai-Ma-2011} for Kirchhoff problems with Neumann boundary condition, but without a double phase operator.

Now we are going to formulate our assumptions on the data of problem \eqref{problem} and present our main result in this paper. We assume the subsequent conditions:

\begin{enumerate}[label=\textnormal{(A$_1$)},ref=\textnormal{A$_1$}]
	\item\label{H1}
		$1<p<N$, $p<q<p^*=\frac{Np}{N-p}$ and $0 \leq \mu(\cdot)\in L^\infty(\Omega)$.
\end{enumerate}

\begin{enumerate}[label=\textnormal{(A$_2$)},ref=\textnormal{A$_2$}]
	\item\label{H2}
		$\phi\colon  [0,\infty) \to [0,\infty)$ is a continuous function given by
		\begin{align}\label{new-17}
			\phi(s) = a + b s^{\zeta-1}
		\end{align}
		with $a \geq 0$, $b>0$ and $\zeta \geq 1$ is such that $q\zeta<p_*=\frac{(N-1)p}{N-p}$.
\end{enumerate}

\begin{enumerate}[label=\textnormal{(A$_3$)},ref=\textnormal{A$_3$}]
	\item\label{H3}
		$f\colon \Omega \times \R \to \R$ and $g\colon\partial\Omega\times\R\to\R$ are Carath\'{e}odory functions such that the following hold:
		\begin{enumerate}[label=\textnormal{(\roman*)},ref=\textnormal{\roman*}]
			\item\label{H3i}
				there exist constants $c_1,c_2>0$ such that
				\begin{align*}
					|f(x,s)| & \leq c_1\left( 1 +|s|^{r_1-1}\right) \quad \text{for a.a.\,}x\in\Omega,\\
					|g(x,s)| & \leq c_2\left( 1 +|s|^{r_2-1}\right) \quad \text{for a.a.\,}x\in\partial \Omega,
				\end{align*}
				for all $s\in \R$, where $r_1<p^*$ and $r_2<p_*$;
			\item\label{H3ii}
				\begin{align*}
					& \lim_{s \to \pm \infty}\,\frac{f(x,s)}{|s|^{q\zeta-2}s}=+\infty \quad\text{uniformly for a.a.\,}x\in\Omega,\\
					& \lim_{s \to \pm \infty}\,\frac{g(x,s)}{|s|^{q\zeta-2}s}=+\infty \quad\text{uniformly for a.a.\,}x\in\partial \Omega;
				\end{align*}
			\item\label{H3iii}
				\begin{align*}
					& \lim_{s \to 0}\,\frac{f(x,s)}{|s|^{p\zeta-2}s}=0 \quad\text{uniformly for a.a.\,}x\in\Omega,\\
					& \lim_{s \to 0}\,\frac{g(x,s)}{|s|^{p\zeta-2}s}=0 \quad\text{uniformly for a.a.\,}x\in\partial\Omega;
				\end{align*}
			\item\label{H3iv}
				the functions
				\begin{align*}
					s\mapsto f(x,s)s- q\zeta F(x,s)
					\quad\text{and}\quad
					s\mapsto g(x,s)s- q\zeta G(x,s)
				\end{align*}
				are nondecreasing on $[0,\infty)$ and nonincreasing on $(-\infty,0]$ for a.a.\,$x\in\Omega$ and for a.a.\,$x\in\partial\Omega$, respectively, where
				\begin{align*}
					F(x,s)=\int_0^s f(x,t)\,\mathrm{d} t
					\quad\text{and}\quad
					G(x,s)=\int_0^s g(x,t)\,\mathrm{d} t;
				\end{align*}
			\item\label{H3v}
				the functions
				\begin{align*}
					s\mapsto \frac{f(x,s)}{|s|^{q\zeta-1}}
					\quad\text{and}\quad
					s\mapsto \frac{g(x,s)}{|s|^{q\zeta-1}}
				\end{align*}
				are strictly increasing on $(-\infty,0)$ and on $(0,+\infty)$ for a.a.\,$x\in\Omega$ and for a.a.\,$x\in\partial\Omega$, respectively, where $\zeta$ is from \eqref{H2}.
		\end{enumerate}
\end{enumerate}

\begin{remark}\label{remark-H2}
	It should be mentioned that from \eqref{H3}\eqref{H3i} and \eqref{H3ii} we can conclude that $q\zeta < \min\{r_1,r_2\}$. Moreover, due to \eqref{H3}\eqref{H3i} and \eqref{H3ii}, it holds that
	\begin{equation}\label{superlinear-F}
		\begin{aligned}
			&\lim_{s \to \pm \infty}\,\frac{F(x,s)}{|s|^{q\zeta}}=+\infty \quad\text{uniformly for a.a.\,}x\in\Omega,\\
			&\lim_{s \to \pm \infty}\,\frac{G(x,s)}{|s|^{q\zeta}}=+\infty \quad\text{uniformly for a.a.\,}x\in\partial\Omega.
		\end{aligned}
	\end{equation}
\end{remark}

\begin{remark}
	If $a>0$, one can suppose weaker assumptions as in \eqref{H3}\eqref{H3iii} of the form
	\begin{align*}
		& \lim_{s \to 0}\,\frac{f(x,s)}{|s|^{p-2}s}=0 \quad\text{uniformly for a.a.\,}x\in\Omega,\\
		& \lim_{s \to 0}\,\frac{g(x,s)}{|s|^{p-2}s}=0 \quad\text{uniformly for a.a.\,}x\in\partial\Omega.
	\end{align*}
	At some places the proofs become easier under these assumptions. We will not consider this case further.
\end{remark}

A weak solution of problem \eqref{problem} is to be understood as a function $u \in  W^{1,\mathcal{H}}(\Omega)$ such that
\begin{align*}
	&\phi(\Xi(u))
	\l(\int_\Omega \left(|\nabla u|^{p-2}\nabla u +\mu(x) |\nabla u|^{q-2}\nabla u \right)\cdot \nabla \varphi\,\mathrm{d} x+\int_\Omega |u|^{p-2}u\varphi\, \mathrm{d}x\r)\\
	&= \int_{\Omega} f(x,u)\varphi\,\mathrm{d} x+\int_{\partial\Omega}g(x,u)\varphi\,\mathrm{d}\sigma
\end{align*}
is satisfied for all test functions $\varphi \in  W^{1,\mathcal{H}}(\Omega)$, where $\phi$ and $\Xi$ are given in \eqref{new-17} and \eqref{new-16}, respectively.

The main result in this paper is the following one.

\begin{theorem}\label{theorem_main_result}
	Let hypotheses \eqref{H1}--\eqref{H3} be satisfied. Then problem \eqref{problem} has a least energy sign-changing solution $y_0\in W^{1,\mathcal{H}}(\Omega)$.
\end{theorem}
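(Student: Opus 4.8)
The starting point is the energy functional associated with \eqref{problem}, namely
\begin{align*}
\mathcal{E}(u)=a\,\Xi(u)+\frac{b}{\zeta}\,\Xi(u)^{\zeta}-\int_\Omega F(x,u)\,\mathrm{d}x-\int_{\partial\Omega}G(x,u)\,\mathrm{d}\sigma,
\end{align*}
which is of class $C^1$ on $W^{1,\mathcal{H}}(\Omega)$ under \eqref{H1}--\eqref{H3} (the subcritical growth $r_1<p^*$, $r_2<p_*$ guaranteeing that the $F$- and $G$-terms are well defined and differentiable), and whose derivative reproduces the weak formulation because $\int_0^t\phi(s)\,\mathrm{d}s=at+\frac{b}{\zeta}t^{\zeta}$. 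Since $\nabla u^+$ and $\nabla u^-$ have disjoint supports and $|u|^p=|u^+|^p+|u^-|^p$ pointwise, the modular splits as $\Xi(u)=\Xi(u^+)+\Xi(u^-)$, but the Kirchhoff factor $\phi(\Xi(u))=\phi(\Xi(u^+)+\Xi(u^-))$ couples the two signs; this is precisely why $\mathcal{C}$ is not the usual nodal Nehari manifold and is the root of the main difficulty. The overall plan is to minimise $\mathcal{E}$ over $\mathcal{C}$, to recover a minimiser as a genuine critical point, and to identify it as a least energy nodal solution.

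The decisive preliminary step is a \emph{projection lemma}: for every $u\in W^{1,\mathcal{H}}(\Omega)$ with $u^{\pm}\neq 0$ I would show that the fibering map $\psi_u(s,t)=\mathcal{E}(su^+-tu^-)$ on $[0,\infty)^2$ admits a unique critical point $(s_u,t_u)\in(0,\infty)^2$, which is its unique global maximiser, so that $s_u u^+-t_u u^-\in\mathcal{C}$. The hypothesis \eqref{H3}\eqref{H3iii}, with the shifted exponent $p\zeta$ forced by the leading term $\tfrac{b}{\zeta}\Xi^{\zeta}$ in the degenerate case $a=0$, makes $\psi_u$ strictly positive near the origin and keeps $\mathcal{C}$ away from $0$, while the superlinearity \eqref{superlinear-F} of order $q\zeta$ drives $\psi_u(s,t)\to-\infty$ as $s+t\to\infty$, yielding an interior maximiser. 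Uniqueness is the hardest point, as the coupling through $\phi$ prevents the two equations from decoupling: writing the system
\begin{align*}
\alpha(s,t):=\l\langle\mathcal{E}'(su^+-tu^-),u^+\r\rangle=0,\qquad
\beta(s,t):=\l\langle\mathcal{E}'(su^+-tu^-),-u^-\r\rangle=0,
\end{align*}
I would use the strict monotonicity in \eqref{H3}\eqref{H3v} and the monotone dependence of $\phi$ on $\Xi$ to show that $\alpha$ is positive for small $s$ and negative for large $s$ (uniformly for $t$ in bounded sets), and symmetrically for $\beta$, so that a Poincar\'e--Miranda/degree argument provides existence and forces any two solutions to coincide. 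This lemma also yields that the projection $u\mapsto(s_u,t_u)$ is continuous, which will be needed later.

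Next, set $m=\inf_{u\in\mathcal{C}}\mathcal{E}(u)$. Using \eqref{H3}\eqref{H3iii} together with the Sobolev and trace embeddings one shows $\|u^{\pm}\|\geq\delta$ for a fixed $\delta>0$ and all $u\in\mathcal{C}$, whence $m>0$. For a minimising sequence $(u_n)\subset\mathcal{C}$, evaluating $\mathcal{E}(u_n)=\mathcal{E}(u_n)-\tfrac{1}{q\zeta}\l\langle\mathcal{E}'(u_n),u_n\r\rangle$ and using \eqref{H3}\eqref{H3iv} gives
\begin{align*}
\mathcal{E}(u_n)\geq\frac{b}{\zeta}\l(\frac1p-\frac1q\r)\Xi(u_n)^{\zeta-1}\int_\Omega\l(|\nabla u_n|^p+|u_n|^p\r)\,\mathrm{d}x;
\end{align*}
this bound, the lower estimate on $\|u_n^{\pm}\|$, and the superlinear growth \eqref{superlinear-F} (via the rescaling argument comparing $\mathcal{E}(u_n)$ with $\mathcal{E}(Ru_n/\|u_n\|)$ through the maximality property of $\mathcal{C}$) exclude $\|u_n\|\to\infty$, so $(u_n)$ is bounded. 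By reflexivity of $W^{1,\mathcal{H}}(\Omega)$ and the compactness of $W^{1,\mathcal{H}}(\Omega)\hookrightarrow L^{r_1}(\Omega)$ and $W^{1,\mathcal{H}}(\Omega)\hookrightarrow L^{r_2}(\partial\Omega)$ I pass to $u_n\rightharpoonup y_0$ with $u_n^{\pm}\rightharpoonup y_0^{\pm}$, strong convergence in the relevant Lebesgue and trace spaces, and a.e.\ convergence; the uniform lower bound on $\|u_n^{\pm}\|$ then forces $y_0^{\pm}\neq 0$. Applying the projection lemma to $y_0$ gives $s,t>0$ with $\bar y:=sy_0^+-ty_0^-\in\mathcal{C}$, and weak lower semicontinuity of $u\mapsto a\Xi(u)+\tfrac{b}{\zeta}\Xi(u)^{\zeta}$ (the modular $\Xi$ being convex and continuous, hence weakly lower semicontinuous, and $t\mapsto t^{\zeta}$ increasing) together with the strong convergence of the $F$- and $G$-integrals yields
\begin{align*}
m\leq\mathcal{E}(\bar y)\leq\liminf_{n\to\infty}\mathcal{E}(su_n^+-tu_n^-)\leq\liminf_{n\to\infty}\mathcal{E}(u_n)=m,
\end{align*}
where the last inequality uses $\mathcal{E}(su_n^+-tu_n^-)\leq\mathcal{E}(u_n)$ from the maximality property. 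Hence $y_0:=\bar y\in\mathcal{C}$ realises $\mathcal{E}(y_0)=m$.

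It remains to show that the constrained minimiser $y_0$ is a free critical point of $\mathcal{E}$, i.e.\ a weak solution of \eqref{problem}. Arguing by contradiction, assume $\mathcal{E}'(y_0)\neq 0$; then $\mathcal{E}'$ is bounded away from $0$ on a neighbourhood of $y_0$. Using a pseudo-gradient flow and a quantitative deformation lemma one deforms the map $(s,t)\mapsto sy_0^+-ty_0^-$ near its maximiser $(1,1)$ so as to push the energy strictly below $m$, while leaving it unchanged near the boundary of a small square about $(1,1)$; the continuity of the projection from the first lemma, combined with a topological degree computation for $(s,t)\mapsto(\alpha(s,t),\beta(s,t))$, guarantees that the deformed family still meets $\mathcal{C}$. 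This produces an element of $\mathcal{C}$ with energy strictly below $m$, a contradiction. Therefore $y_0$ is a weak solution with $y_0^{\pm}\neq 0$, hence sign-changing. Finally, every sign-changing weak solution belongs to $\mathcal{C}$ (its derivative annihilates $u^+$ and $-u^-$), so $\mathcal{E}(y_0)=m$ shows that $y_0$ has least energy among all nodal solutions, completing the proof.
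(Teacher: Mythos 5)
Your proposal is correct and follows the paper's strategy almost step for step: the same constraint set $\mathcal{C}$, the same projection/fibering lemma proved via the Poincar\'e--Miranda theorem (Proposition \ref{proposition_nodal_unique-pair}), the positivity of the infimum combined with the lower bound $\|u^{\pm}\|\geq\Psi$ on $\mathcal{C}$, sequential coercivity via the rescaling comparison with $\mathcal{E}(t_1y_n^+-t_2y_n^-)$, and the quantitative deformation lemma plus Poincar\'e--Miranda in the endgame. The one place where you genuinely deviate is the attainment of the infimum: you project the weak limit to $\bar y=sy_0^+-ty_0^-\in\mathcal{C}$ and close the chain $m\leq\mathcal{E}(\bar y)\leq\liminf_{n}\mathcal{E}(su_n^+-tu_n^-)\leq\liminf_{n}\mathcal{E}(u_n)=m$ using only the convexity (hence weak lower semicontinuity) of $u\mapsto\Phi[\Xi(u)]$, the strong convergence of the $F$- and $G$-integrals, and the maximality property $\mathcal{E}(su_n^+-tu_n^-)\leq\mathcal{E}(u_n)$ from the projection lemma. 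The paper instead first proves $\alpha_{y_0},\beta_{y_0}\in(0,1]$ (Proposition \ref{proposition_nodal_unique-pair-less-one}) and then runs a lengthy computation with $\mathcal{E}-\frac{1}{q\zeta}\langle\mathcal{E}'(\cdot),\cdot\rangle$ and hypothesis \eqref{H3}\eqref{H3iv} to force $\alpha_{y_0}=\beta_{y_0}=1$; your route is shorter and bypasses \eqref{H3}\eqref{H3iv} at this stage, while the paper's version additionally identifies the minimizer as the weak limit itself rather than a rescaling of it. One caveat: in the projection lemma you assert that Poincar\'e--Miranda ``forces any two solutions to coincide'' --- it does not. Uniqueness requires the separate sign comparisons (i)--(iv) of Proposition \ref{proposition_nodal_unique-pair}, obtained by dividing the two Nehari-type identities by $\alpha^{q\zeta}$, resp.\ $\beta^{q\zeta}$, and invoking the strict monotonicity \eqref{H3}\eqref{H3v}, together with the trick of rewriting a second solution $\alpha_2u^+-\beta_2u^-$ in terms of the positive and negative parts of a first one already known to lie in $\mathcal{C}$; this step must be carried out explicitly, since the deformation argument at the end also relies on exactly these sign inequalities on the boundary of the square $D$.
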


The proof of Theorem \ref{theorem_main_result} uses variational methods along with minimization arguments. Precisely, the energy functional $\mathcal{E}\colon  W^{1,\mathcal{H}}(\Omega) \to \R$ associated to problem \eqref{problem} is defined by
\begin{equation*}
	\mathcal{E}(u) = \Phi[\Xi(u)] -\int_\Omega F(x,u)\,\mathrm{d} x-\int_{\partial\Omega}G(x,u)\,\mathrm{d}\sigma,
\end{equation*}
whereby $\Phi\colon  [0,\infty) \to [0,\infty)$ is given as
\begin{align*}
	\Phi(s)= \int_0^s\phi(\tau)\,\mathrm{d} \tau= as  +\frac{b}{\zeta}s^\zeta.
\end{align*}
The idea in finding a least energy sign-changing solution (also called nodal ground state solution) is to minimize $\mathcal{E}$ over the constraint set
\begin{align}\label{constraint-set}
	\mathcal{C}=\Big\{u \in  W^{1,\mathcal{H}}(\Omega)\colon u^{\pm}\neq 0,\, \l\langle \mathcal{E}'(u),u^+ \r\rangle= \l\langle \mathcal{E}'(u),-u^- \r\rangle=0 \Big\},
\end{align}
where $u^+=\max\{u,0\}$ and $u^-=\max\{-u,0\}$. It should be pointed out that the set $\mathcal{E}$ differs from the nodal Nehari manifold which is defined as
\begin{align}\label{nodal-Nehari-manifold}
	\mathcal{N}_0= \left\{ u \in  W^{1, \mathcal{H} } ( \Omega )\colon \pm u^\pm \in \mathcal{N} \right\}
\end{align}
with
\begin{align*}
	\mathcal{N} = \left\{ u \in  W^{1, \mathcal{H} } ( \Omega )\colon \langle \mathcal{E}'(u) , u \rangle = 0, \; u \neq 0 \right\}
\end{align*}
being the Nehari manifold to \eqref{problem}. Indeed, due to the nonlocal character of the problem in terms of a Kirchhoff function, we cannot split our energy functional in a positive and a negative part. Instead, we have for $u\in W^{1,\mathcal{H}}(\Omega)$ with $u^+ \neq 0 \neq u^-$ (so $u = u^+ - u^-$) and $\zeta>1$ the relations
\begin{equation}\label{splitting-energy-functional}
	\begin{aligned}
		\langle \mathcal{E}'(u),u^+\rangle&>\langle \mathcal{E}'(u^+),u^+\rangle,\quad
		\langle \mathcal{E}'(u),-u^-\rangle>\langle \mathcal{E}'(-u^-),-u^-\rangle, \\
		\mathcal{E}(u)&>\mathcal{E}(u^+)+\mathcal{E}(-u^-).
	\end{aligned}
\end{equation}
This makes the treatment of \eqref{problem} much more complicated as in the case $\phi\equiv 1$ since in that case we would have equations instead of strict inequalities in \eqref{splitting-energy-functional}.

Coming back to the constraint set $\mathcal{C}$ we know that all sign-changing solutions of \eqref{problem} are located in the set $\mathcal{C}$. That means that the global minimizer of $\mathcal{E}$ over the set $\mathcal{C}$ has least energy and is a nodal ground state solution if one can show that it is a critical point of $\mathcal{E}$. As already mentioned by Bartsch-Weth \cite{Bartsch-Weth-2005}, this is not a priori clear since the constraint set $\mathcal{C}$ is, in general, not a manifold anymore. However, if we set $\Phi\equiv 1$, $p=2$ and $\mu(x)\equiv 0$, then $\mathcal{C}\cap H$ with $H=H^1_0(\Omega)\cap H^2(\Omega)$ equipped with the scalar product from $H^2(\Omega)$, is a $C^1$-manifold of codimension two in $H$, see Bartsch-Weth \cite{Bartsch-Weth-2003}, but it is not complete in $H$ in general.

Our paper can be seen as an extension of the works of Gasi\'{n}ski-Winkert \cite{Gasinski-Winkert-2021} and Crespo-Blanco-Gasi\'{n}ski-Winkert \cite{Crespo-Blanco-Gasinski-Winkert-2023}. Indeed, in \cite{Gasinski-Winkert-2021} the existence of a sign-changing solution to the problem
\begin{equation*}
	\begin{aligned}
		-\operatorname{div}\left(|\nabla u|^{p-2}\nabla u+\mu(x) |\nabla u|^{q-2}\nabla u\right) & =f(x,u)-|u|^{p-2}u-\mu(x)|u|^{q-2}u&& \text{in } \Omega,\\
		\left(|\nabla u|^{p-2}\nabla u+\mu(x) |\nabla u|^{q-2}\nabla u\right) \cdot \nu & = g(x,u) &&\text{on } \partial \Omega,
	\end{aligned}
\end{equation*}
has been shown by minimizing the corresponding energy functional over the nodal Nehari manifold given in \eqref{nodal-Nehari-manifold}. As mentioned above, this treatment is not possible in our setting due to the lack of the splitting of the energy functional related to problem \eqref{problem}, see again \eqref{splitting-energy-functional}. In \cite{Crespo-Blanco-Gasinski-Winkert-2023} the authors deal with multiplicity of solutions for Kirchhoff problems with Dirichlet boundary conditions and a double phase setting. We are going to combine the ideas from both papers, using a similar method as in \cite{Crespo-Blanco-Gasinski-Winkert-2023} and taking ideas from \cite{Gasinski-Winkert-2021} how to deal with the nonlinear boundary term.

The method of minimizing related energy functionals over the constraint set $\mathcal{C}$ was first used in the paper of Bartsch-Weth \cite{Bartsch-Weth-2003} (see also \cite{Bartsch-Weth-2005} of the same authors) in order to get a sign-changing solution for semilinear equations while additional properties like the Morse index have been proved as well. A nonlocal version of the problem in \cite{Bartsch-Weth-2003} and \cite{Bartsch-Weth-2005} has been studied by Shuai \cite{Shuai-2015} while Tang-Cheng \cite{Tang-Cheng-2016} were able to weaken the assumptions in \cite{Shuai-2015} and proved the existence of a nodal ground state solution to the Kirchhoff problem
\begin{align*}
	-\left(a+b \|\nabla u\|_2^2\right)\Delta u = f(u)\quad  \text{in } \Omega, \quad
	u = 0 \quad  \text{on } \partial\Omega.
\end{align*}
In Tang-Chen \cite{Tang-Chen-2017} and Wang-Zhang-Cheng \cite{Wang-Zhang-Cheng-2018}, Schr\"{o}dinger-Kirchhoff problems of the shape
\begin{align*}
	-\left(a+b \|\nabla u\|_2^2\right)\Delta u+V(x)u = f(u)\quad  \text{in } \R^3, \quad
	u \in H^1(\R^3)
\end{align*}
have been studied under different hypotheses on the data. In both papers the existence of a nodal ground state solution could be shown by using a similar constraint set as in \eqref{constraint-set}. Closely related in this direction are the works of Figueiredo-Santos J\'{u}nior \cite{Figueiredo-Santos-Junior-2015} for Schr\"{o}dinger-Kirchhoff equations with potential vanishing at infinity, Li-Wang-Zhang \cite{Li-Wang-Zhang-2020} for $p$-Laplacian Kirchhoff-type problems with logarithmic nonlinearity, Liang-R\u{a}dulescu \cite{Liang-Radulescu-2020} for fractional Kirchhoff problems with logarithmic and critical nonlinearity, Zhang \cite{Zhang-2021} for Schr\"{o}dinger-Kirchhoff-type problems and Zhang \cite{Zhang-2023} for $N$-Laplacian equations of Kirchhoff type.

It is worth mentioning that in all the papers stated above dealing with a nonlocal term (except the one in \cite{Crespo-Blanco-Gasinski-Winkert-2023}), the Kirchhoff function has the form
\begin{align*}
	s\mapsto a+bs,
\end{align*}
while we allow a much more general function
\begin{align*}
	s\mapsto a+b s^{\zeta-1} \quad\text{with }\zeta\geq 1.
\end{align*}
Furthermore, most of the papers we mentioned are dealing with the case $a>0$ while we are able to include the degenerate case which is more complicated to deal with it and has much more applications in mathematical physics, see above.

The paper is organized as follows. In Section \ref{Section_2} we present the main properties of Musielak-Orlicz Sobolev spaces including a very useful equivalent norm in $W^{1,\mathcal{H}}(\Omega)$ and we present some important tools needed in the sequel like the Poincar\'{e}-Miranda existence theorem and the quantitative deformation lemma. Finally, Section \ref{Section_3} is devoted to the proof of Theorem \ref{theorem_main_result} which is based on several auxiliary results in order to show that the minimizer of $\mathcal{E}$ over $\mathcal{C}$ is the required nodal ground state solution.

\section{Preliminaries}\label{Section_2}

In this section we mention the main properties of our function space as well as the double phase operator and we  recall some tools that we are going to use in Section \ref{Section_3} in order to prove Theorem \ref{theorem_main_result}. For more information about Musielak-Orlicz Sobolev spaces and the double phase operator we also refer to Colasuonno-Squassina \cite{Colasuonno-Squassina-2016}, Harjulehto-H\"{a}st\"{o} \cite{Harjulehto-Hasto-2019}, Liu-Dai \cite{Liu-Dai-2018} and Perera-Squassina \cite{Perera-Squassina-2018}.

Let $\Omega\subseteq \R^N$ be a bounded domain with a Lipschitz boundary $\partial\Omega$ and $N\geq 2$. Given $1\leq r \leq \infty$, $L^{r}(\Omega)$ and $L^r ( \Omega ; \R^N )$ stand for the usual Lebesgue spaces equipped with the norm $\| \cdot \|_r$. For $1<r<\infty$, we denote by $W^{1,r}(\Omega)$ the corresponding Sobolev space endowed with the equivalent norm
\begin{align*}
	\|u\|_{1,r}=\l(\|\nabla u\|_r^r+\|u\|_r^r \r)^{\frac{1}{r}}.
\end{align*}
Moreover, we denote by $L^{r}(\partial \Omega)$ the boundary Lebesgue space with norm $\|\cdot \|_{r,\partial\Omega}$ for any $r\in [1,\infty )$ given by
\begin{align*}
	\|u\|_{r,\partial\Omega}=\l(\int_{\partial\Omega} |u|^r\,\mathrm{d}\sigma\r)^{\frac{1}{r}},
\end{align*}
where $\sigma$ denotes the $(N-1)$-dimensional Hausdorff surface measure.

Denoting by $M(\Omega)$ the set of all measurable function $u \colon \Omega \to \R$ and supposing hypotheses \eqref{H1}, we introduce the function $\mathcal{H} \colon \Omega \times [0,\infty) \to [0,\infty)$ defined by
\begin{align*}
	\mathcal{H} ( x , t )= t^p + \mu(x) t^q.
\end{align*}
Then we can define the Musielak-Orlicz space $L^\mathcal{H} ( \Omega )$ by
\begin{align*}
	L^\mathcal{H} ( \Omega ) = \left \{ u \in M(\Omega)\colon \rho_{\mathcal{H}} ( u ) < +\infty \right \},
\end{align*}
which is equipped with the norm
\begin{align*}
	\| u \|_{\mathcal{H}} = \inf \left \{ \lambda >0 \colon \rho_{\mathcal{H}} \left ( \frac{u}{\lambda} \right ) \leq 1  \right \},
\end{align*}
whereby $\rho_{\mathcal{H}}$ is the modular function defined by
\begin{align*}
	\rho_{\mathcal{H}}(u)=\int_\Omega \mathcal{H} ( x , |u| ) \,\mathrm{d} x=\int_\Omega \big( |u|^{p} + \mu(x) | u |^q \big ) \, \mathrm{d} x.
\end{align*}
In the sequel, we also have to deal with the seminormed space $L^q_\mu ( \Omega )$ given by
\begin{align*}
	L^q_\mu ( \Omega ) = \left \{ u \in M ( \Omega ) \colon \int_\Omega \mu(x) | u |^q \, \mathrm{d} x < +\infty \right \},
\end{align*}
while the related seminorm is defined as
\begin{align*}
	\| u \|_{q,\mu} = \left ( \int_\Omega \mu(x) | u |^q \, \mathrm{d} x \right)^{ \frac{1}{q} }.
\end{align*}
Similarly, we can introduce $L^q_\mu(\Omega;\R^N)$. The associated Musielak-Orlicz Sobolev space
\begin{align*}
	W^{1,\mathcal{H}}(\Omega)= \left \{ u \in L^\mathcal{H}(\Omega) \colon | \nabla u | \in L^{\mathcal{H}} ( \Omega ) \right\}
\end{align*}
is equipped with the norm
\begin{align*}
	\| u \|_{1 , \mathcal{H} } = \| \nabla u \|_{\mathcal{H}} + \| u \|_{\mathcal{H}}.
\end{align*}
It is well known that both spaces $L^{\mathcal{H}}(\Omega)$ and $W^{1,\mathcal{H}}(\Omega)$ are reflexive Banach spaces, see Crespo-Blanco-Gasi\'{n}ski-Harjulehto-Winkert \cite[Proposition 2.12]{Crespo-Blanco-Gasinski-Harjulehto-Winkert-2022}. Moreover, from Amoroso-Crespo-Blanco-Pucci-Winkert \cite[Proposition 3.1]{Amoroso-Crespo-Blanco-Pucci-Winkert-2023} (see also Crespo-Blanco-Papageorgiou-Winkert \cite[Proposition 2.2]{Crespo-Blanco-Papageorgiou-Winkert-2022}) we can equip $W^{1,\mathcal{H}}(\Omega)$ with the equivalent norm
\begin{align*}
	\|u\|=\inf\l\{\lambda>0\colon\int_\Omega \l(\l(\frac{|\nabla u|}{\lambda}\r)^p+\mu(x)\l(\frac{|\nabla u|}{\lambda}\r)^q\r)\,\mathrm{d} x+\int_\Omega \l(\frac{|u|}{\lambda}\r)^{p}\,\mathrm{d} x\leq 1\r\}.
\end{align*}
The corresponding modular $\rho$ to $\|\cdot\|$ is given by
\begin{align*}
	\rho(u)=\int_\Omega \big(|\nabla u|^p+\mu(x)|\nabla u|^q\big)\,\mathrm{d}x+\int_\Omega |u|^p\,\mathrm{d} x
\end{align*}
for $u \in W^{1,\mathcal{H}}(\Omega)$.

The following proposition can be found in the work of Amoroso-Crespo-Blanco-Pucci-Winkert \cite[Proposition 3.2]{Amoroso-Crespo-Blanco-Pucci-Winkert-2023}.

\begin{proposition}\label{proposition_modular_properties2}
	Let \eqref{H1} be satisfied, $\tau>0$ and $y\in W^{1,\mathcal{H}}(\Omega)$. Then the following hold:
	\begin{enumerate}
		\item[\textnormal{(i)}]
			If $y\neq 0$, then $\|y\|=\tau$ if and only if $ \rho(\frac{y}{\tau})=1$;
		\item[\textnormal{(ii)}]
			$\|y\|<1$ (resp.\,$>1$, $=1$) if and only if $ \rho(y)<1$ (resp.\,$>1$, $=1$);
		\item[\textnormal{(iii)}]
			If $\|y\|<1$, then $\|y\|^q\leq \rho(y)\leq\|y\|^p$;
		\item[\textnormal{(iv)}]
			If $\|y\|>1$, then $\|y\|^p\leq \rho(y)\leq\|y\|^q$;
		\item[\textnormal{(v)}]
			$\|y\|\to 0$ if and only if $ \rho(y)\to 0$;
		\item[\textnormal{(vi)}]
			$\|y\|\to +\infty$ if and only if $ \rho(y)\to +\infty$.
	\end{enumerate}
\end{proposition}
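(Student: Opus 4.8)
The plan is to reduce the entire proposition to the behaviour of the single-variable function $\lambda \mapsto \rho(y/\lambda)$, whose explicit shape comes from the mixed homogeneity of the modular. Writing $A = \int_\Omega \big(|\nabla y|^p + |y|^p\big)\,\mathrm{d} x$ and $B = \int_\Omega \mu(x)|\nabla y|^q\,\mathrm{d} x$, one has for every $\lambda>0$ the identity $\rho(y/\lambda) = A\lambda^{-p} + B\lambda^{-q}$. For $y\neq 0$ we have $A \geq \|y\|_p^p > 0$ and $B \geq 0$, so this is a continuous, strictly decreasing function of $\lambda$ on $(0,\infty)$ that tends to $+\infty$ as $\lambda\to 0^+$ and to $0$ as $\lambda\to+\infty$. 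By the intermediate value theorem there is a unique $\lambda_0>0$ with $\rho(y/\lambda_0)=1$, and monotonicity gives $\{\lambda>0 : \rho(y/\lambda)\leq 1\} = [\lambda_0,\infty)$. Taking the infimum over this set yields $\|y\|=\lambda_0$, which is precisely statement (i).

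Part (ii) then follows at once from (i) together with monotonicity. Indeed, for $y\neq 0$ set $h(\lambda)=\rho(y/\lambda)$, so that $h(\|y\|)=1$ and $\rho(y)=h(1)$. Since $h$ is strictly decreasing, $\|y\|<1$ forces $h(1)<h(\|y\|)=1$, i.e.\ $\rho(y)<1$, and the cases $\|y\|=1$ and $\|y\|>1$ are handled identically; the converse directions follow because $h$ is injective. The degenerate case $y=0$ is immediate, since then both $\|y\|$ and $\rho(y)$ vanish.

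For the two-sided bounds (iii) and (iv) I would exploit the defining relation $1 = A\|y\|^{-p} + B\|y\|^{-q}$, valid for $y\neq 0$ by (i). Multiplying through by $\|y\|^q$ gives $\|y\|^q = A\|y\|^{q-p} + B$, while multiplying by $\|y\|^p$ gives $\|y\|^p = A + B\|y\|^{p-q}$. Recalling that $\rho(y)=A+B$ and that $q>p$, the comparison of $\|y\|^{q-p}$ and $\|y\|^{p-q}$ with $1$ (according to whether $\|y\|<1$ or $\|y\|>1$) immediately produces the chains $\|y\|^q \leq \rho(y) \leq \|y\|^p$ when $\|y\|<1$ and $\|y\|^p \leq \rho(y)\leq \|y\|^q$ when $\|y\|>1$. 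Finally, the equivalences (v) and (vi) can be read off from (iii) and (iv): once $\|y\|$ (resp.\ $\rho(y)$) is below $1$, the sandwich $\|y\|^q\leq\rho(y)\leq\|y\|^p$ shows that either quantity tending to $0$ forces the other to do so, and symmetrically the bound for $\|y\|>1$ governs the behaviour near $+\infty$; the passage from a smallness or largeness hypothesis on one quantity into the regime $\|y\|<1$ or $\|y\|>1$ uses (ii).

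No step presents a genuine obstacle, as the whole proposition is a consequence of the strict monotonicity of $\lambda\mapsto\rho(y/\lambda)$. If anything requires care, it is only the bookkeeping of the edge case $y=0$ and the verification that the infimum defining $\|y\|$ is actually attained, both of which are settled by the continuity and limiting behaviour established in the first step.
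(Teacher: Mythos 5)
Your proof is correct. Note that the paper does not prove this proposition at all: it is quoted verbatim from Amoroso--Crespo-Blanco--Pucci--Winkert \cite[Proposition 3.2]{Amoroso-Crespo-Blanco-Pucci-Winkert-2023}, so there is no in-paper argument to compare against. Your reduction to the strict monotonicity of $\lambda\mapsto\rho(y/\lambda)=A\lambda^{-p}+B\lambda^{-q}$ (with $A>0$ for $y\neq 0$), followed by multiplying the identity $1=A\|y\|^{-p}+B\|y\|^{-q}$ by $\|y\|^p$ and $\|y\|^q$ to get the two-sided bounds, is exactly the standard argument used for such norm--modular relations in the Musielak--Orlicz literature, and all edge cases ($y=0$, $B=0$, attainment of the infimum) are handled correctly.
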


From Crespo-Blanco-Gasi\'{n}ski-Harjulehto-Winkert \cite[Proposition 2.16]{Crespo-Blanco-Gasinski-Harjulehto-Winkert-2022} we have the following embeddings which will be used later.

\begin{proposition}
	\label{proposition_embeddings}
	Let \eqref{H1} be satisfied. Then the following embeddings hold:
	\begin{enumerate}
		\item[\textnormal{(i)}]
			$ W^{1, \mathcal{H} } ( \Omega ) \hookrightarrow L^{r}(\Omega)$ is continuous for all $r \in [1,p^*]$ and compact for all $r \in [1,p^*)$.
		\item[\textnormal{(ii)}]
			$ W^{1, \mathcal{H} } ( \Omega ) \hookrightarrow L^{r}(\partial\Omega)$ is continuous for all $r \in [1,p_*]$ and compact for all $r \in [1,p_*)$.
	\end{enumerate}
\end{proposition}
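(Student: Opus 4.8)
The plan is to deduce both embeddings from the corresponding classical Sobolev and trace embeddings for $W^{1,p}(\Omega)$, by first establishing a continuous embedding $W^{1,\mathcal{H}}(\Omega)\hookrightarrow W^{1,p}(\Omega)$. Once this is available, the statements follow by composing linear maps, using that the composition of two continuous embeddings is continuous and that the composition of a continuous embedding with a compact one is again compact.

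First I would show that the inclusion $W^{1,\mathcal{H}}(\Omega)\hookrightarrow W^{1,p}(\Omega)$ is continuous. Since $\mathcal{H}(x,t)=t^p+\mu(x)t^q\geq t^p$ for all $(x,t)\in\Omega\times[0,\infty)$ because $\mu\geq 0$ by \eqref{H1}, the pointwise bound
\begin{align*}
	\int_\Omega \l(\frac{|u|}{\lambda}\r)^p\,\mathrm{d}x \leq \rho_{\mathcal{H}}\l(\frac{u}{\lambda}\r)
\end{align*}
holds for every $\lambda>0$ and $u\in L^{\mathcal{H}}(\Omega)$. For any $\lambda>\|u\|_{\mathcal{H}}$ the definition of the Luxemburg norm gives $\rho_{\mathcal{H}}(u/\lambda)\leq 1$, whence $\|u\|_p\leq\lambda$; letting $\lambda\downarrow\|u\|_{\mathcal{H}}$ yields $\|u\|_p\leq\|u\|_{\mathcal{H}}$. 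Applying the same argument to $|\nabla u|$ gives $\|\nabla u\|_p\leq\|\nabla u\|_{\mathcal{H}}$, and therefore
\begin{align*}
	\|u\|_{1,p}=\l(\|\nabla u\|_p^p+\|u\|_p^p\r)^{\frac1p}\leq C\,\|u\|_{1,\mathcal{H}}
\end{align*}
for a constant $C>0$ depending only on $p$. This establishes the desired continuous embedding $W^{1,\mathcal{H}}(\Omega)\hookrightarrow W^{1,p}(\Omega)$.

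Next, since $\Omega$ is a bounded domain with Lipschitz boundary and \eqref{H1} ensures $1<p<N$, the classical Rellich--Kondrachov theorem gives that $W^{1,p}(\Omega)\hookrightarrow L^r(\Omega)$ is continuous for $r\in[1,p^*]$ and compact for $r\in[1,p^*)$, while the Sobolev trace theorem gives that $W^{1,p}(\Omega)\hookrightarrow L^r(\partial\Omega)$ is continuous for $r\in[1,p_*]$ and compact for $r\in[1,p_*)$, with $p_*=\frac{(N-1)p}{N-p}$. Composing these with the continuous inclusion from the previous step establishes both (i) and (ii): continuity is preserved under composition, and precomposing a compact embedding with the bounded inclusion $W^{1,\mathcal{H}}(\Omega)\hookrightarrow W^{1,p}(\Omega)$ again yields a compact operator.

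The argument is essentially routine; the only point requiring care is the passage from the modular inequality to the norm inequality in the first step. I expect this to be the main (though minor) obstacle, since it relies on the monotonicity in $\lambda$ and the defining infimum of the Luxemburg norm rather than on an equality of the form $\rho_{\mathcal{H}}(u/\|u\|_{\mathcal{H}})=1$; working with $\lambda>\|u\|_{\mathcal{H}}$ and then passing to the limit circumvents any borderline issue and keeps the estimate valid for all $u\in L^{\mathcal{H}}(\Omega)$, including the trivial case $u=0$.
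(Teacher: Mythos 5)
Your proof is correct, and it is the standard argument for this fact: the paper itself does not prove the proposition but cites \cite{Crespo-Blanco-Gasinski-Harjulehto-Winkert-2022}, where the embeddings are obtained in essentially the same way, namely via the continuous inclusion $W^{1,\mathcal{H}}(\Omega)\hookrightarrow W^{1,p}(\Omega)$ (coming from $\mathcal{H}(x,t)\geq t^p$) composed with the classical Sobolev and trace embeddings on a bounded Lipschitz domain. Your careful handling of the passage from the modular inequality to the norm inequality via $\lambda>\|u\|_{\mathcal{H}}$ is sound.
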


Furthermore, for $s \in \R$, we set $s^{\pm}=\max\{\pm s,0\}$ and for a function $u \in  W^{1,\mathcal{H}}(\Omega)$ we define $u^{\pm}(\cdot)=u(\cdot)^{\pm}$. Clearly, $|u|=u^++u^-$ and $u=u^+-u^-$. We also know from Crespo-Blanco-Gasi\'{n}ski-Harjulehto-Winkert \cite[Proposition 2.17]{Crespo-Blanco-Gasinski-Harjulehto-Winkert-2022} that $u^{\pm} \in  W^{1,\mathcal{H}}(\Omega)$ whenever $u \in W^{1,\mathcal{H}}(\Omega)$. The Lebesgue measure of a set $V \subseteq \R^N$ will be denoted by $|V|_N$.

Next, we recall the main properties of our operator. For this purpose, let  $B\colon  W^{1, \mathcal{H} } ( \Omega )\to  W^{1, \mathcal{H} } ( \Omega )^*$ be defined by
\begin{align*}
	\langle B(u),v\rangle =\int_\Omega \big(|\nabla u|^{p-2}\nabla u+\mu(x)|\nabla u|^{q-2}\nabla u \big)\cdot\nabla v \,\mathrm{d} x+\int_\Omega |u|^{p-2}u v\,\mathrm{d}x
\end{align*}
for all $u,v\in W^{1, \mathcal{H} } ( \Omega )$. Here, $\langle\,\cdot\,,\,\cdot\,\rangle$ stands for the duality pairing between the space $ W^{1, \mathcal{H} } ( \Omega )$ and its dual space $ W^{1, \mathcal{H} } ( \Omega )^*$.  The following proposition has been proved in Amoroso-Crespo-Blanco-Pucci-Winkert \cite[Proposition 3.3]{Amoroso-Crespo-Blanco-Pucci-Winkert-2023}.

\begin{proposition}
	Under hypotheses \eqref{H1}, the operator $B$ is bounded, continuous, strictly monotone and satisfies the \textnormal{(S$_+$)}-property, i.e., if
	\begin{align*}
		u_n\rightharpoonup u \quad \text{in } W^{1,\mathcal{H}}(\Omega) \quad\text{and}\quad  \limsup_{n\to\infty}\,\langle Bu_n,u_n-u\rangle\le 0
	\end{align*}
	hold, then we have $u_n\to u$ in $ W^{1,\mathcal{H}}(\Omega)$.
\end{proposition}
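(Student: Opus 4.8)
The plan is to verify the four assertions in turn, treating the $(S_+)$-property as the substantive part. \textbf{Boundedness and continuity.} For boundedness I would apply Hölder's inequality to each of the three summands in $\langle B(u),v\rangle$, using that $|\nabla u|^{p-1}\in L^{p'}(\Omega)$, that $\mu^{1/q'}|\nabla u|^{q-1}\in L^{q'}(\Omega)$ (since $(q-1)q'=q$, so this power integrates against $\mu$ to $\|\nabla u\|_{q,\mu}^q$), and that $|u|^{p-1}\in L^{p'}(\Omega)$. This yields $|\langle B(u),v\rangle|\le c\big(\|\nabla u\|_p^{p-1}+\|\nabla u\|_{q,\mu}^{q-1}+\|u\|_p^{p-1}\big)\|v\|$, and Proposition \ref{proposition_modular_properties2} converts the modular quantities into powers of $\|u\|$, giving $\|B(u)\|_*\le c(\|u\|^{p-1}+\|u\|^{q-1})$ on bounded sets. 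Continuity would follow from the continuity of the associated Nemytskij operators: if $u_n\to u$ in $W^{1,\mathcal{H}}(\Omega)$, then $\nabla u_n\to\nabla u$ in $L^{\mathcal H}(\Omega;\R^N)$ and $u_n\to u$ in $L^p(\Omega)$, so $|\nabla u_n|^{p-2}\nabla u_n$, $\mu|\nabla u_n|^{q-2}\nabla u_n$ and $|u_n|^{p-2}u_n$ converge in the relevant dual Lebesgue and seminormed spaces; a subsequence-of-every-subsequence argument then upgrades this to full convergence $B(u_n)\to B(u)$ in $W^{1,\mathcal{H}}(\Omega)^*$.

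\textbf{Strict monotonicity.} I would rely on the elementary pointwise inequalities
$$\l(|\xi|^{t-2}\xi-|\eta|^{t-2}\eta\r)\cdot(\xi-\eta)>0\quad\text{for }\xi\neq\eta,\ t>1,$$
applied with $t=p$ and $t=q$ to the two gradient contributions and with $t=p$ to the zero-order term. Summing the three, and using $\mu\ge 0$, gives $\langle B(u)-B(v),u-v\rangle\ge 0$, where equality forces $\nabla u=\nabla v$ and $u=v$ a.e., hence $u=v$; this is strict monotonicity.

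\textbf{The $(S_+)$-property, the main obstacle.} Assume $u_n\rightharpoonup u$ in $W^{1,\mathcal{H}}(\Omega)$ and $\limsup_n\langle B(u_n),u_n-u\rangle\le 0$. Since $u_n-u\rightharpoonup 0$ gives $\langle B(u),u_n-u\rangle\to 0$, I would first deduce
$$\lim_{n\to\infty}\langle B(u_n)-B(u),u_n-u\rangle=0.$$
By the monotonicity inequalities this pairing is a sum of three \emph{nonnegative} integrands, so each of the three integrals tends to $0$ separately; passing to a subsequence, the integrands converge to $0$ a.e., and the strict pointwise inequalities force $\nabla u_n\to\nabla u$ and $u_n\to u$ almost everywhere. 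The delicate point is upgrading this a.e.\ convergence to $\rho(u_n-u)\to 0$. In the regime $t\ge 2$ one has the clean bound $\l(|\xi|^{t-2}\xi-|\eta|^{t-2}\eta\r)\cdot(\xi-\eta)\ge c_t|\xi-\eta|^t$, which directly controls $\|\nabla u_n-\nabla u\|_p^p$, $\|\nabla u_n-\nabla u\|_{q,\mu}^q$ and $\|u_n-u\|_p^p$; for $1<t<2$ the analogous Simon-type inequality carries a weight $(|\xi|+|\eta|)^{t-2}$, and recovering the $L^t$-quantity then requires an additional Hölder step together with the boundedness of $\{u_n\}$. Once $\rho(u_n-u)\to 0$ is established, Proposition \ref{proposition_modular_properties2}(v) yields $\|u_n-u\|\to 0$, i.e.\ $u_n\to u$ in $W^{1,\mathcal{H}}(\Omega)$. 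I expect the case $1<p<2$ (and $p<q<2$, when it occurs) together with the degenerate weight $\mu$ in the $q$-term to be the heaviest part, since there the monotonicity integrand does not dominate a fixed power of the difference and one must combine the weighted inequality with an equi-integrability argument to pass from a.e.\ convergence to modular convergence.
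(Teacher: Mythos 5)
The paper does not prove this proposition itself; it imports it verbatim from Amoroso--Crespo-Blanco--Pucci--Winkert [Proposition 3.3], so there is no in-paper argument to compare against. Your sketch is correct and is the standard route taken in that reference and throughout the double phase literature: H\"older estimates combined with the modular--norm comparisons for boundedness, Nemytskij continuity with a subsequence argument, the elementary vector inequalities for strict monotonicity, and, for the $(\mathrm{S}_+)$-property, reducing $\limsup_n\langle Bu_n,u_n-u\rangle\le 0$ to $\langle Bu_n-Bu,u_n-u\rangle\to 0$ and then invoking the Simon-type inequalities (split into the cases $t\ge 2$ and $1<t<2$, the latter with the extra H\"older step you describe, applied also with the measure $\mu\,\mathrm{d}x$ for the $q$-term) to obtain $\rho(u_n-u)\to 0$ and hence norm convergence.
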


Finally, we recall some important tools that will be needed in the next section. The first one is the quantitative deformation lemma, see Willem \cite[Lemma 2.3]{Willem-1996}.

\begin{lemma} \label{Le:DeformationLemma}
	Let $X$ be a Banach space, $\mathcal{E} \in C^1(X;\R)$, $\emptyset \neq S \subseteq X$, $c \in \R$, $\varepsilon,\delta > 0$ such that for all $u \in \mathcal{E}^{-1}([c - 2\varepsilon, c + 2\varepsilon]) \cap S_{2 \delta}$ there holds $\| \mathcal{E}'(u) \|_* \geq 8\varepsilon / \delta$, where $S_{r} = \{ u \in X \colon d(u,S) = \inf_{u_0 \in S} \| u - u_0 \| < r \}$ for any $r > 0$.
	Then there exists $\eta \in C([0, 1] \times X; X)$ such that
	\begin{enumerate}
		\item[\textnormal{(i)}]
			$\eta (t, u) = u$, if $t = 0$ or if $u \notin \mathcal{E}^{-1}([c - 2\varepsilon, c + 2\varepsilon]) \cap S_{2 \delta}$;
		\item[\textnormal{(ii)}]
			$\mathcal{E}( \eta( 1, u ) ) \leq c - \varepsilon$ for all $u \in \mathcal{E}^{-1} ( ( - \infty, c + \varepsilon] ) \cap S $;
		\item[\textnormal{(iii)}]
			$\eta(t, \cdot )$ is an homeomorphism of $X$ for all $t \in [0,1]$;
		\item[\textnormal{(iv)}]
			$\| \eta(t, u) - u \| \leq \delta$ for all $u \in X$ and $t \in [0,1]$;
		\item[\textnormal{(v)}]
			$\mathcal{E}( \eta( \cdot , u))$ is decreasing for all $u \in X$;
		\item[\textnormal{(vi)}]
			$\mathcal{E}(\eta(t, u)) < c$ for all $u \in \mathcal{E}^{-1} ( ( - \infty, c] ) \cap S_\delta$ and $t \in (0, 1]$.
	\end{enumerate}
\end{lemma}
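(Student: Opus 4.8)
The plan is to realize $y_0$ as a global minimizer of $\mathcal{E}$ over the constraint set $\mathcal{C}$ from \eqref{constraint-set} and then to upgrade this minimizer to a genuine critical point of $\mathcal{E}$ by a deformation argument. The latter step is forced upon us because \eqref{splitting-energy-functional} shows that $\mathcal{C}$ is \emph{not} the nodal Nehari manifold and, in general, not even a $C^1$-manifold, so no Lagrange multiplier rule is available.

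First I would analyze the two-parameter fiber maps. Fix $u\in W^{1,\mathcal{H}}(\Omega)$ with $u^+\neq 0\neq u^-$ and study $(s,t)\mapsto\mathcal{E}(su^+-tu^-)$ on $(0,\infty)^2$. Along the fiber, $\Phi[\Xi(su^+-tu^-)]$ has leading powers $s^{q\zeta}$, $t^{q\zeta}$ for large arguments and $s^{p\zeta}$, $t^{p\zeta}$ near the origin; hypothesis \eqref{H3ii}, through the superlinearity \eqref{superlinear-F}, makes the nonlinear terms beat $s^{q\zeta}+t^{q\zeta}$, forcing $\mathcal{E}\to-\infty$ as $s^2+t^2\to\infty$, while \eqref{H3iii}, matched exactly to the power $p\zeta$ of the Kirchhoff term, yields $\mathcal{E}(su^+-tu^-)>0$ for small $s,t>0$ (this tuning is what covers the degenerate case $a=0$). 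Hence a global maximum is attained in the interior, and its location is precisely a zero of the vector field $\Psi(s,t)=\big(\l\langle\mathcal{E}'(su^+-tu^-),su^+\r\rangle,\,\l\langle\mathcal{E}'(su^+-tu^-),-tu^-\r\rangle\big)$. Existence of a zero on a suitable box I would obtain from the Poincaré--Miranda theorem; uniqueness, and the fact that it is a strict maximum, I would derive from the strict monotonicity in \eqref{H3v} together with \eqref{H3iv}. This produces, for every such $u$, a unique pair $(s_u,t_u)$ with $s_uu^+-t_uu^-\in\mathcal{C}$, so in particular $\mathcal{C}\neq\emptyset$.

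Next I would set $m:=\inf_{\mathcal{C}}\mathcal{E}$ and show $0<m<\infty$. The lower bound $m>0$ follows from the constraint equations combined with the subcritical growth \eqref{H3i}, the embeddings of Proposition \ref{proposition_embeddings} and the modular estimates of Proposition \ref{proposition_modular_properties2}, where the Kirchhoff term supplies the leading contribution $\Xi(u)^\zeta$. For a minimizing sequence $(u_n)\subseteq\mathcal{C}$ I would prove boundedness in $W^{1,\mathcal{H}}(\Omega)$ by exploiting the constraint relations and the monotonicity \eqref{H3iv} to absorb the superlinear terms. By reflexivity $u_n\rightharpoonup y_0$ along a subsequence; the compact embeddings of Proposition \ref{proposition_embeddings} dispatch the terms involving $f$ and $g$, and the $(\mathrm{S}_+)$-property of $B$ then upgrades this to $u_n\to y_0$ strongly. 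A uniform lower bound on $\|u_n^\pm\|$, again extracted from the constraint together with \eqref{H3i}, prevents the positive and negative parts from vanishing, so $y_0^{\pm}\neq 0$, $y_0\in\mathcal{C}$ and $\mathcal{E}(y_0)=m$.

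The decisive and hardest step is $\mathcal{E}'(y_0)=0$. Arguing by contradiction, if $\mathcal{E}'(y_0)\neq 0$ then by continuity $\|\mathcal{E}'\|_*$ is bounded below on a ball around $y_0$, and Lemma \ref{Le:DeformationLemma} supplies a deformation $\eta$ that pushes a sublevel set below $m$, moves points by at most $\delta$, and fixes everything outside a small neighborhood of $y_0$. I would then compose $\eta(1,\cdot)$ with the fiber map $(s,t)\mapsto sy_0^+-ty_0^-$ near $(1,1)$ and examine the vector field analogous to $\Psi$ on the deformed family; choosing $\delta$ small relative to the strict gap in \eqref{splitting-energy-functional} keeps the sign conditions on the faces of a small box intact, so Poincaré--Miranda yields a point in $\mathcal{C}$ on the deformed family with energy strictly below $m$, contradicting the definition of $m$. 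Thus $\mathcal{E}'(y_0)=0$, i.e. $y_0$ is a weak solution; it changes sign because $y_0^{\pm}\neq 0$; and since every sign-changing solution of \eqref{problem} lies in $\mathcal{C}$, the minimality $\mathcal{E}(y_0)=m$ makes $y_0$ a least energy sign-changing solution. The main obstacle is exactly this last argument: the nonlocal coupling of $u^+$ and $u^-$ through $\Xi(u)$ means the two fiber directions do not decouple as in the case $\phi\equiv 1$, so one must control $\l\langle\mathcal{E}'(su^+-tu^-),su^+\r\rangle$ and its negative counterpart jointly and verify that the deformation preserves the sign-changing structure and the boundary inequalities needed for the degree-theoretic conclusion.
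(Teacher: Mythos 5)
There is a fundamental mismatch here: the statement you were asked to prove is Lemma \ref{Le:DeformationLemma} itself, i.e.\ the quantitative deformation lemma --- a general result about a $C^1$ functional on an arbitrary Banach space, asserting the existence of a deformation $\eta\in C([0,1]\times X;X)$ with properties (i)--(vi) under the hypothesis $\|\mathcal{E}'(u)\|_*\geq 8\varepsilon/\delta$ on $\mathcal{E}^{-1}([c-2\varepsilon,c+2\varepsilon])\cap S_{2\delta}$. What you have written instead is an outline of the proof of Theorem \ref{theorem_main_result} (the existence of a least energy sign-changing solution), in which you \emph{invoke} Lemma \ref{Le:DeformationLemma} as a black box in the final contradiction argument. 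As a proof of the lemma itself this is circular: you cannot use the deformation lemma to establish the deformation lemma. None of the machinery specific to problem \eqref{problem} --- the fiber maps, the Poincar\'e--Miranda theorem, the constraint set $\mathcal{C}$, the space $W^{1,\mathcal{H}}(\Omega)$ --- is relevant to the statement at hand, which involves no double phase structure, no Kirchhoff term, and no boundary nonlinearity.

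For the record, the paper does not prove this lemma either; it quotes it from Willem, ``Minimax Theorems'', Lemma 2.3. An actual proof runs along the following standard lines: since $\mathcal{E}\in C^1(X;\R)$, one constructs a locally Lipschitz pseudo-gradient vector field $v$ for $\mathcal{E}$ on the set $\{\mathcal{E}'\neq 0\}$, satisfying $\|v(u)\|\leq 2\|\mathcal{E}'(u)\|_*$ and $\langle \mathcal{E}'(u),v(u)\rangle\geq \|\mathcal{E}'(u)\|_*^2$; one then multiplies $-v(u)/\|v(u)\|^2$ (suitably normalized) by locally Lipschitz cutoff functions that equal $1$ on $\mathcal{E}^{-1}([c-\varepsilon,c+\varepsilon])\cap S_\delta$ and vanish outside $\mathcal{E}^{-1}([c-2\varepsilon,c+2\varepsilon])\cap S_{2\delta}$, and defines $\eta$ as the flow of the resulting bounded, locally Lipschitz vector field. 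Properties (i) and (iii)--(v) follow from the construction of the flow and the cutoffs; properties (ii) and (vi) follow from integrating $\frac{d}{dt}\mathcal{E}(\eta(t,u))\leq -\text{const}\cdot\|\mathcal{E}'(\eta(t,u))\|_*$ and using the quantitative lower bound $\|\mathcal{E}'\|_*\geq 8\varepsilon/\delta$, which guarantees that the energy drops by at least $2\varepsilon$ while the trajectory travels a distance at most $\delta$. If your task was in fact to prove Theorem \ref{theorem_main_result}, then your outline is essentially the paper's strategy (Propositions \ref{proposition_nodal_unique-pair}--\ref{proposition_infimum_achieved} followed by the deformation argument), but it should then be compared against that proof, not against this lemma.
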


The second one is the so-called Poincar\'{e}-Miranda existence theorem, see Dinca-Mawhin \cite[Corollary 2.2.15]{Dinca-Mawhin-2021}.

\begin{lemma}
	\label{lemma-poincare-miranda}
	Let $Q = [-t_1, t_1] \times \cdots \times [-t_N, t_N]$ with $t_i > 0$ for $i =1,\ldots, N$ and $\varphi \colon Q \to \R^N$ be continuous with the component functions $\varphi_i\colon Q\to\R$ for $i=1,\ldots, N$. If, for each $i =1,\ldots, N$, one has
	\begin{align*}
		\begin{aligned}
			\varphi_i (u) & \leq 0 \quad\text{when } u \in Q \text{ and } u_i = -t_i, \\
			\varphi_i (u) & \geq 0 \quad\text{when } u \in Q \text{ and } u_i = t_i,
		\end{aligned}
	\end{align*}
	then $\varphi$ has at least one zero point in $Q$.
\end{lemma}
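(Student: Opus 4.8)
The plan is to deduce the statement from Brouwer's fixed point theorem by converting the sign conditions on the faces of $Q$ into a fixed-point statement for a truncated version of the vector field $-\varphi$. First I would introduce the coordinatewise projection (clamping) map $P\colon\R^N\to Q$ given by
$P(x)_i=\max\{-t_i,\min\{t_i,x_i\}\}$,
which is continuous, maps onto $Q$, and restricts to the identity on $Q$. Since $Q$ is a nonempty compact convex subset of $\R^N$ and $\varphi$ is continuous, the map $T:=P\circ(\operatorname{id}-\varphi)\colon Q\to Q$ is a continuous self-map of $Q$. Brouwer's fixed point theorem then furnishes a point $u^*\in Q$ with $T(u^*)=u^*$, that is, $u^*_i=\max\{-t_i,\min\{t_i,\,u^*_i-\varphi_i(u^*)\}\}$ for every $i=1,\ldots,N$.

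Next I would check coordinatewise that this identity forces $\varphi_i(u^*)=0$, distinguishing three cases according to the location of $u^*_i$. If $u^*_i\in(-t_i,t_i)$, then neither truncation can be active, since an active clamp would push the value to $\pm t_i$ and contradict strict interiority; hence $u^*_i=u^*_i-\varphi_i(u^*)$, giving $\varphi_i(u^*)=0$. If $u^*_i=t_i$, the face hypothesis gives $\varphi_i(u^*)\geq 0$, so $u^*_i-\varphi_i(u^*)\leq t_i$ and the inner $\min$ is inactive; the fixed-point identity then reduces to $\max\{-t_i,\,t_i-\varphi_i(u^*)\}=t_i$, which, since $t_i>0$, can hold only when $\varphi_i(u^*)=0$. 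The case $u^*_i=-t_i$ is entirely symmetric, using $\varphi_i(u^*)\leq 0$. Collecting the three cases over all $i$ yields $\varphi(u^*)=0$, so $u^*$ is the desired zero in $Q$.

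The main (and essentially the only) obstacle is Brouwer's fixed point theorem itself, which supplies the topological content; everything else is the elementary clamping bookkeeping above, whose sole subtlety is verifying that the prescribed sign on each face of $Q$ matches exactly the direction in which the corresponding truncation is permitted to act.

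As an alternative I would note that the result also follows from Brouwer degree theory. Assuming $\varphi\neq 0$ on $\partial Q$ (otherwise a boundary zero already finishes the proof), the affine homotopy $H(\lambda,u)=(1-\lambda)u+\lambda\varphi(u)$ is admissible on $\partial Q$: on the face $u_i=t_i$ one has $H_i(\lambda,u)=(1-\lambda)t_i+\lambda\varphi_i(u)>0$ for $\lambda<1$ and $H_i(1,u)=\varphi_i(u)\neq 0$, with the symmetric estimate on $u_i=-t_i$, so $H(\lambda,u)\neq 0$ for all $u\in\partial Q$ and $\lambda\in[0,1]$. Homotopy invariance then gives $\deg(\varphi,\operatorname{int}Q,0)=\deg(\operatorname{id},\operatorname{int}Q,0)=1\neq 0$, since $0\in\operatorname{int}Q$, and the nonvanishing of the degree forces $\varphi$ to have a zero in $\operatorname{int}Q$.
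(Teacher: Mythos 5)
Your proof is correct. Note that the paper itself contains no proof of this lemma: it is quoted as a known result (the Poincar\'{e}--Miranda existence theorem) from Dinca--Mawhin [Corollary 2.2.15], where it is derived from Brouwer degree theory. Your second, degree-theoretic argument is therefore essentially the route of the cited source: the affine homotopy to the identity is admissible on $\partial Q$ precisely because of the sign conditions on the faces, and homotopy invariance together with $\deg(\operatorname{id},\operatorname{int}Q,0)=1$ (valid since $0\in\operatorname{int}Q$, as all $t_i>0$) produces a zero. Your first argument --- clamping $\operatorname{id}-\varphi$ back into the box and applying Brouwer's fixed point theorem --- is a genuinely self-contained alternative that replaces the degree machinery by elementary case-checking; the case analysis is complete, and you use $t_i>0$ exactly where it is needed to exclude the degenerate clampings, while the face hypotheses enter precisely to deactivate the inner truncation on the corresponding faces. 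Each route buys what one expects: the fixed-point version needs only Brouwer's theorem, while the degree version generalizes more readily (e.g.\ to statements with strict inequalities on the faces yielding interior zeros). One cosmetic slip in the degree argument: on the face $u_i=t_i$ at $\lambda=1$ you write $H_i(1,u)=\varphi_i(u)\neq 0$, but the \emph{component} $\varphi_i(u)$ may well vanish there; what saves the admissibility of the homotopy at $\lambda=1$ is the standing assumption that the \emph{vector} $\varphi(u)\neq 0$ on $\partial Q$, which you had already put in place, so the argument stands after rephrasing that sentence.
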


\section{Least energy sign-changing solution}\label{Section_3}
In this section we are going to prove Theorem \ref{theorem_main_result} about the existence of a least energy sign-changing solution to problem \eqref{problem}. First, we recall the energy functional $\mathcal{E}\colon  W^{1,\mathcal{H}}(\Omega) \to \R$ related to problem \eqref{problem} which is given by
\begin{equation*}
	\mathcal{E}(u) = \Phi[\Xi(u)] -\int_\Omega F(x,u)\,\mathrm{d} x-\int_{\partial\Omega}G(x,u)\,\mathrm{d}\sigma,
\end{equation*}
where $\Phi\colon  [0,\infty) \to [0,\infty)$ is defined by
\begin{align*}
	\Phi(s)= \int_0^s\phi(\tau)\,\mathrm{d} \tau= as  +\frac{b}{\zeta}s^\zeta.
\end{align*}
As already mentioned in the Introduction, we will consider the following constraint set
\begin{align*}
	\mathcal{C}=\Big\{u \in  W^{1,\mathcal{H}}(\Omega)\colon u^{\pm}\neq 0,\, \l\langle \mathcal{E}'(u),u^+ \r\rangle= \l\langle \mathcal{E}'(u),-u^- \r\rangle=0 \Big\}.
\end{align*}

Next, we prove several auxiliary results.

\begin{proposition}
	\label{proposition_nodal_unique-pair}
	Let hypotheses \eqref{H1}--\eqref{H3} be satisfied and let $u \in  W^{1,\mathcal{H}}(\Omega)$ be such that $u^{\pm}\neq 0$. Then we can find a unique pair of positive numbers $(\alpha_u,\beta_u)$ such that $\alpha_uu^+-\beta_uu^-\in \mathcal{C}$. Furthermore, if $u\in \mathcal{C}$, we have
	\begin{align*}
		\mathcal{E}(t_1u^+-t_2u^-)\leq \mathcal{E}(u^+-u^-)=\mathcal{E}(u)
	\end{align*}
	for all $t_1,t_2\geq 0$ with strict inequality whenever $(t_1,t_2)\neq (1,1)$. Moreover, for $u\in\mathcal{C}$ we have the following sign information:
	\begin{enumerate}
		\item[\textnormal{(i)}]
			if $\alpha>1$ and $0 < \beta \leq \alpha$, then $ \langle \mathcal{E}' (\alpha u^+-\beta u^-),\alpha u^+\rangle < 0$;
		\item[\textnormal{(ii)}]
			if $\alpha<1$ and $0 < \alpha \leq \beta$, then $ \langle \mathcal{E}' (\alpha u^+-\beta u^-),\alpha u^+\rangle > 0$;
		\item[\textnormal{(iii)}]
			if $\beta>1$ and $0 < \alpha \leq \beta$, then $ \langle \mathcal{E}' (\alpha u^+-\beta u^-),-\beta u^-\rangle < 0$;
		\item[\textnormal{(iv)}]
			if $\beta<1$ and $0 < \beta \leq \alpha$, then $ \langle \mathcal{E}' (\alpha u^+-\beta u^-),-\beta u^-\rangle > 0$.
	\end{enumerate}
\end{proposition}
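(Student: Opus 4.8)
The plan is to reduce the entire statement to the study of the two-variable fibering map $\psi(t_1,t_2):=\mathcal{E}(t_1u^+-t_2u^-)$ on the closed quadrant $t_1,t_2\ge 0$. Since $u^+$ and $u^-$ have disjoint supports, both $\Xi$ and the integrals defining $\mathcal{E}$ split accordingly, so $\psi$ has an explicit form and is smooth on $(0,\infty)^2$. Differentiating and comparing with the definitions, one checks the identities
\[
t_1\,\partial_{t_1}\psi(t_1,t_2)=\Gamma^+(t_1,t_2), \qquad t_2\,\partial_{t_2}\psi(t_1,t_2)=\Gamma^-(t_1,t_2),
\]
where $\Gamma^+(\alpha,\beta):=\langle\mathcal{E}'(\alpha u^+-\beta u^-),\alpha u^+\rangle$ and $\Gamma^-(\alpha,\beta):=\langle\mathcal{E}'(\alpha u^+-\beta u^-),-\beta u^-\rangle$. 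Thus $\alpha u^+-\beta u^-\in\mathcal{C}$ is exactly equivalent to $(\alpha,\beta)$ being an interior critical point of $\psi$, and the whole proposition becomes a statement about the existence, uniqueness and maximality of such a point.

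For existence I would pass to the coordinates $t_i=e^{s_i}$ and apply the Poincar\'e--Miranda theorem (Lemma \ref{lemma-poincare-miranda}) to $(s_1,s_2)\mapsto(\Gamma^+(e^{s_1},e^{s_2}),\Gamma^-(e^{s_1},e^{s_2}))$ on a large box $[-T,T]^2$. The boundary signs come from two one-sided estimates. When $\alpha=e^{-T}$ is small, the local term of $\Gamma^+$, namely $\phi(\Xi(\alpha u^+-\beta u^-))$ times $\langle\Xi'(\alpha u^+-\beta u^-),\alpha u^+\rangle$, behaves like a positive multiple of $\alpha^p$ and stays bounded below, since $\phi(\Xi(\alpha u^+-\beta u^-))\ge b\,\Xi(\beta u^-)^{\zeta-1}>0$ even in the degenerate case $a=0$, while \eqref{H3iii} forces the reaction terms to be of strictly lower order; hence $\Gamma^+>0$. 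When $\alpha=e^{T}$ is large, \eqref{H3ii} makes the reaction terms grow faster than $\alpha^{q\zeta}$, whereas the local term is of order $\alpha^{q\zeta}$ uniformly for $\beta\le e^{T}$, so $\Gamma^+<0$. The symmetric estimates in $\beta$, uniform for $\alpha\in[e^{-T},e^{T}]$, give the two conditions for $\Gamma^-$. Taking $T$ large enough and, if needed, replacing the map by its negative to match the orientation in Lemma \ref{lemma-poincare-miranda}, we obtain a zero $(\alpha_u,\beta_u)\in(0,\infty)^2$, that is $\alpha_u u^+-\beta_u u^-\in\mathcal{C}$.

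The core of the argument is the sign information (i)--(iv), which I would prove first for $u\in\mathcal{C}$ (so that $\Gamma^+(1,1)=\Gamma^-(1,1)=0$) and then exploit for uniqueness. Write $\Gamma^+(\alpha,\beta)=L(\alpha,\beta)-N(\alpha)$, where $L$ is the Kirchhoff term $\phi(\Xi(\alpha u^+-\beta u^-))\langle\Xi'(\alpha u^+-\beta u^-),\alpha u^+\rangle$ and $N(\alpha)=\int_\Omega f(x,\alpha u^+)\alpha u^+\,\mathrm{d}x+\int_{\partial\Omega}g(x,\alpha u^+)\alpha u^+\,\mathrm{d}\sigma$. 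The strict monotonicity in \eqref{H3v} gives, pointwise where $u^+>0$, the inequality $f(x,\alpha u^+)\alpha u^+>\alpha^{q\zeta}f(x,u^+)u^+$ for $\alpha>1$ (and the reverse for $\alpha<1$), whence $N(\alpha)>\alpha^{q\zeta}N(1)$ strictly. For the local term one combines the monotonicity of $\phi$ with the scaling comparison $\Xi(\alpha u^+-\beta u^-)\le\alpha^q\,\Xi(u)$, which holds precisely when $\beta\le\alpha$, to get $L(\alpha,\beta)\le\alpha^{q\zeta}L(1,1)$. Since $L(1,1)=N(1)$, subtraction yields $\Gamma^+(\alpha,\beta)<0$ for $\alpha>1$, $0<\beta\le\alpha$, which is (i); the cases (ii)--(iv) follow by the same computation with reversed inequalities and with $u^+$ and $u^-$ interchanged. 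I expect this control of the nonlocal factor $\phi(\Xi(\alpha u^+-\beta u^-))$ to be the main obstacle: as it couples $\alpha$ and $\beta$, the comparison for $L$ points in the right direction only under the ordering hypotheses on $(\alpha,\beta)$, and this coupling is exactly what makes $\mathcal{C}$ fail to be the nodal Nehari manifold \eqref{nodal-Nehari-manifold}.

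Uniqueness for an arbitrary $u$ with $u^\pm\neq0$ then follows by reduction: with $v:=\alpha_u u^+-\beta_u u^-\in\mathcal{C}$, every admissible pair for $u$ corresponds to a rescaling $sv^+-tv^-\in\mathcal{C}$ with $s,t>0$, and applying (i)--(iv) to $v$ (distinguishing $s\ge t$ from $s\le t$ and comparing with $1$) forces $(s,t)=(1,1)$. For the maximality statement, assume $u\in\mathcal{C}$; by \eqref{superlinear-F} the reaction part of $\psi$ dominates the $O(t_i^{q\zeta})$ growth of $\Phi\circ\Xi$, so $\psi$ is continuous on $[0,\infty)^2$ and tends to $-\infty$ as $t_1^2+t_2^2\to\infty$, while the small-$t$ estimate from the existence step gives $\partial_{t_1}\psi>0$ for small $t_1$ and $\partial_{t_2}\psi>0$ for small $t_2$, so no maximizer lies on the axes $\{t_1=0\}\cup\{t_2=0\}$. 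Consequently $\psi$ attains its global maximum at an interior critical point, which by uniqueness is $(1,1)$; since $(1,1)$ is the only critical point and the maximum is not attained on the boundary, the inequality $\mathcal{E}(t_1u^+-t_2u^-)\le\mathcal{E}(u^+-u^-)$ is strict whenever $(t_1,t_2)\neq(1,1)$.
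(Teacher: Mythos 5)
Your proposal is correct and follows essentially the same route as the paper: Poincar\'e--Miranda with boundary sign estimates for existence, the scaling comparison of the Kirchhoff term against the $\alpha^{q\zeta}$-homogenized reaction term via \eqref{H3}\eqref{H3v} for the sign information (i)--(iv), reduction to the case $u\in\mathcal{C}$ for uniqueness, and coercivity of the fibering map plus exclusion of boundary maximizers via \eqref{H3}\eqref{H3iii} for the maximality claim. The only differences are presentational (you phrase (i)--(iv) directly rather than by contradiction, use exponential coordinates for the Miranda box, and bound $\phi(\Xi(\alpha u^+-\beta u^-))$ from below by the $u^-$-part of the modular instead of the $u^+$-part, which still works but makes the small-$\alpha$ estimate $\beta$-dependent and slightly less uniform than the paper's choice).
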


\begin{proof}
	Let $u \in  W^{1,\mathcal{H}}(\Omega)$ with $u^{\pm}\neq 0$. The proof is divided into several parts.

	{\bf Part 1:} We are going to prove the existence of a pair $(\alpha_u,\beta_u) \in (0,\infty)\times (0,\infty)$ such that $\alpha_uu^+-\beta_uu^-\in \mathcal{C}$.

	First, for $t\in (0,1)$ and for $|s|>0$, by taking \eqref{H3}\eqref{H3v} into account, we have
	\begin{align}\label{new-1}
		\frac{f(x,ts)(ts)}{t^{q\zeta}|s|^{q\zeta}} \leq \frac{f(x,s)s}{|s|^{q\zeta}} \quad \text{for a.a.\,}x\in \Omega.
	\end{align}
	Analogously, for $t\in (0,1)$ and for $|s|>0$, again by \eqref{H3}\eqref{H3v}, it follows that
	\begin{align}\label{new-2}
		\frac{g(x,ts)(ts)}{t^{q\zeta}|s|^{q\zeta}} \leq \frac{g(x,s)s}{|s|^{q\zeta}} \quad \text{for a.a.\,}x\in \partial\Omega.
	\end{align}
	From \eqref{new-1} and \eqref{new-2} we get the inequalities
	\begin{align}
		f(x,ts)s &\leq t^{q\zeta-1}f(x,s)s \quad \text{for a.a.\,}x\in \Omega,\label{n4}\\
		g(x,ts)s &\leq t^{q\zeta-1}g(x,s)s \quad \text{for a.a.\,}x\in \partial\Omega.\label{n5}
	\end{align}
	Then, taking \eqref{n4} and \eqref{n5} into account, we obtain
	\begin{align*}
		& \l\langle \mathcal{E}'\l(\alpha u^+-\beta u^-\r),\alpha u^+\r\rangle  \\
		& =\Bigg( a+b\bigg(\frac{1}{p}\l\|\alpha u^+-\beta u^- \r\|_{1,p}^p
		+\frac{1}{q}\l\|\nabla \l(\alpha u^+-\beta u^-\r) \r\|_{q,\mu}^q\bigg)^{\zeta-1}\Bigg)\\
		&\quad \times \l(\l\| \alpha u^+ \r\|_{1,p}^p+\l\|\nabla \l(\alpha u^+\r) \r\|_{q,\mu}^q\r)\\
		& \quad -\int_\Omega  f\l(x,\alpha u^+\r)\alpha u^+\,\mathrm{d} x-\int_{\partial\Omega}  g\l(x,\alpha u^+\r)\alpha u^+\,\mathrm{d} \sigma  \\
		& \geq \frac{b}{p^{\zeta-1}} \alpha^{p\zeta} \l\| u^+\r\|_{1,p}^{p\zeta}-\alpha^{q\zeta}\int_\Omega  f\l(x, u^+\r) u^+\,\mathrm{d} x-\alpha^{q\zeta}\int_{\partial\Omega}  g\l(x, u^+\r) u^+\,\mathrm{d} \sigma>0,
	\end{align*}
	whenever $\alpha>0$ is small enough and for all $\beta\geq 0$ since $p<q$ by \eqref{H1}. In the same way, for $\beta>0$ small enough and for all $\alpha\geq 0$, we have
	\begin{align*}
		& \l\langle \mathcal{E}'\l(\alpha u^+-\beta u^-\r),-\beta u^-\r\rangle   \\
		& =\Bigg(a+b\bigg(\frac{1}{p}\l\|\alpha u^+-\beta u^- \r\|_{1,p}^p+\frac{1}{q}\l\|\nabla \l(\alpha u^+-\beta u^-\r) \r\|_{q,\mu}^q\bigg)^{\zeta-1}\Bigg)\\
		&\quad \times \l(\l\|\beta u^- \r\|_{1,p}^p+\l\|\nabla \l(\beta u^-\r) \r\|_{q,\mu}^q\r)\\
		& \quad -\int_\Omega  f\l(x,-\beta u^-\r)\l(-\beta u^-\r)\,\mathrm{d} x -\int_{\partial\Omega}  g\l(x,-\beta u^-\r)\l(-\beta u^-\r)\,\mathrm{d} \sigma\\
		& \geq \frac{b}{p^{\zeta-1}} \beta^{p\zeta} \l\| u^-\r\|_{1,p}^{p\zeta}-\beta^{q\zeta}\int_\Omega  f\l(x, -u^-\r) \l(-u^-\r)\,\mathrm{d} x\\
		&\quad-\beta^{q\zeta}\int_{\partial \Omega}  g\l(x, -u^-\r) \l(-u^-\r)\,\mathrm{d} \sigma>0.
	\end{align*}
	Therefore, we can find a number $\eta_1>0$ such that
	\begin{align}\label{G1}
		 & \l\langle \mathcal{E}'\l(\eta_1 u^+-\beta u^-\r),\eta_1 u^+\r\rangle>0
		\quad\text{and}\quad
		\l\langle\mathcal{E}'\l(\alpha u^+-\eta_1 u^-\r),-\eta_1 u^-\r\rangle>0
	\end{align}
	for all $\alpha,\beta\geq 0$. Choosing $\eta_2>\max\{1,\eta_1\}$ large enough, by applying \eqref{H3}\eqref{H3ii}, it holds for $\beta \in [0,\eta_2]$
	\begin{align*}
		& \frac{\l\langle \mathcal{E}'\l(\eta_2 u^+-\beta u^-\r),\eta_2 u^+\r\rangle}{\eta_2^{q\zeta}}   \\
		& =\frac{a \l(\l\|\eta_2 u^+ \r\|_{1,p}^p+\l\| \nabla \l(\eta_2 u^+\r) \r\|_{q,\mu}^q\r)}{\eta_2^{q\zeta}}  \\
		 & \quad +\frac{b\l(\frac{1}{p}\l\|\eta_2 u^+-\beta u^- \r\|_{1,p}^p+\frac{1}{q}\l\|\nabla \l(\eta_2 u^+-\beta u^-\r) \r\|_{q,\mu}^q\r)^{\zeta-1}
		}{\eta_2^{q\zeta}}  \\
		& \quad\times \l(\l\|\eta_2 u^+\r\|_{1,p}^p+\l\| \nabla \l(\eta_2 u^+\r) \r\|_{q,\mu}^q\r)\\
		& \quad -\int_\Omega  \frac{f\l(x,\eta_2 u^+\r)\eta_2 u^+}{\eta_2^{q\zeta}}\,\mathrm{d} x-\int_{\partial\Omega}  \frac{g\l(x,\eta_2 u^+\r)\eta_2 u^+}{\eta_2^{q\zeta}}\,\mathrm{d} \sigma  \\
		& \leq a \frac{1}{\eta_2^{q(\zeta-1)}}\l(\l\|u^+ \r\|_{1,p}^p+\l\| \nabla  u^+ \r\|_{q,\mu}^q\r) \\
		& \quad +b
		\l(\l\|u^+- u^- \r\|_{1,p}^p+\l\|\nabla \l( u^+- u^-\r) \r\|_{q,\mu}^q\r)^{\zeta}   \\
		& \quad -\int_{\Omega_+}  \frac{f\l(x,\eta_2 u^+\r)}{\l(\eta_2u^+\r)^{q\zeta-1}}\l(u^+\r)^{q\zeta}\,\mathrm{d} x-\int_{(\partial \Omega)_+}  \frac{g\l(x,\eta_2 u^+\r)}{\l(\eta_2u^+\r)^{q\zeta-1}}\l(u^+\r)^{q\zeta}\,\mathrm{d} \sigma<0,\\
	\end{align*}
	where
	\begin{align*}
		\Omega_+&:=\left\{x\in\Omega\colon u(x)>0\text{ for a.a.\,}x\in\Omega\right\},\\
		\Omega_-&:=\left\{x\in\Omega\colon u(x)<0\text{ for a.a.\,}x\in\Omega\right\},\\
		(\partial\Omega)_+&:=\left\{x\in\partial\Omega\colon u(x)>0\text{ for a.a.\,}x\in\partial\Omega\right\},\\
		(\partial\Omega)_-&:=\left\{x\in\partial\Omega\colon u(x)<0\text{ for a.a.\,}x\in\partial\Omega\right\}.
	\end{align*}

	Similarly, for $\alpha \in [0,\eta_2]$ and $\eta_2>\max\{1,\eta_1\}$ large enough, we are able to prove that
	\begin{align*}
		\frac{\l\langle \mathcal{E}'\l(\alpha u^+-\eta_2 u^-\r),-\eta_2 u^-\r\rangle}{\eta_2^{q\zeta}}<0.
	\end{align*}
	Then it follows that
	\begin{align}\label{G2}
		\l\langle \mathcal{E}'\l( \eta_2 u^+-\beta u^-\r),\eta_2 u^+\r\rangle<0
		\quad\text{and}\quad
		\l\langle \mathcal{E}'\l(\alpha u^+-\eta_2 u^-\r),-\eta_2 u^-\r\rangle<0
	\end{align}
	for $\eta_2>\max\{1,\eta_1\}$ large enough and for $\alpha,\beta \in [0,\eta_2]$. Next, let $\varrho_u \colon [0,\infty) \times [0,\infty) \to \R^2$ be defined by
	\begin{equation*}
		\varrho_u (\alpha,\beta)
		= \left( \l\langle \mathcal{E}'\l( \alpha u^+-\beta u^-\r), \alpha u^+\r\rangle ,
		\l\langle \mathcal{E}'\l(\alpha u^+-\beta u^-\r),-\beta u^-\r\rangle \right).
	\end{equation*}
	From Lemma \ref{lemma-poincare-miranda} and \eqref{G1} as well as \eqref{G2} we find a pair $(\alpha_u,\beta_u)\in [\eta_1,\eta_2] \times [\eta_1,\eta_2] \subseteq (0,\infty)\times (0,\infty)$ satisfying $\varrho_u (\alpha_u,\beta_u) = (0,0)$. Hence $\alpha_uu^+-\beta_uu^-\in \mathcal{C}$. This proves Part 1.

	{\bf Part 2:} In this part we will prove the sign information given in (i)--(iv).

	To this end, let $u \in \mathcal{C}$. By the definition of $\mathcal{C}$, we have
	\begin{equation}\label{G3}
		\begin{aligned}
			0
			&=\l\langle \mathcal{E}'(u),u^+\r\rangle\\
			&=\l(a +b\l(\frac{1}{p}\l\|u\r\|_{1,p}^p+\frac{1}{q}\l\|\nabla u\r\|_{q,\mu}^q\r)^{\zeta-1}\r)
			\l(\l\| u^+\r\|_{1,p}^p+\l\|\nabla u^+\r\|_{q,\mu}^q\r)\\
			&\quad -\int_\Omega  f\l(x,u^+\r)u^+\,\mathrm{d} x-\int_{\partial \Omega}  g\l(x,u^+\r)u^+\,\mathrm{d} \sigma
		\end{aligned}
	\end{equation}
	and
	\begin{equation}\label{G4}
		\begin{aligned}
			0
			&=\l\langle \mathcal{E}'(u),-u^-\r\rangle\\
			&=\l(a +b\l(\frac{1}{p}\l\|u\r\|_{1,p}^p+\frac{1}{q}\l\|\nabla u\r\|_{q,\mu}^q\r)^{\zeta-1}\r)
			\l(\l\| u^-\r\|_{1,p}^p+\l\|\nabla u^-\r\|_{q,\mu}^q\r)\\
			&\quad -\int_\Omega  f\l(x,-u^-\r)\l(-u^-\r)\,\mathrm{d} x-\int_{\partial \Omega}  g\l(x,-u^-\r)\l(-u^-\r)\,\mathrm{d} \sigma.
		\end{aligned}
	\end{equation}
	We start with case \textnormal{(i)}. Let $\alpha>1$, $0 < \beta \leq \alpha$ and suppose by contradiction that
	\begin{equation}
		\label{G5}
		\begin{aligned}
			0
			&\leq\l\langle \mathcal{E}'\l(\alpha u^+-\beta u^-\r),\alpha u^+\r\rangle\\
			&=\l(a +b\l(\frac{1}{p}\l\|\alpha u^+-\beta u^-\r\|_{1,p}^p+\frac{1}{q}\l\|\nabla \l(\alpha u^+-\beta u^-\r)\r\|_{q,\mu}^q\r)^{\zeta-1}\r)\\
			&\quad\times\l(\l\|\alpha u^+\r\|_{1,p}^p+\l\|\nabla \l(\alpha u^+\r)\r\|_{q,\mu}^q\r)\\
			&\quad -\int_\Omega  f\l(x,\alpha u^+\r)\alpha u^+\,\mathrm{d} x-\int_{\partial \Omega}  g\l(x,\alpha u^+\r)\alpha u^+\,\mathrm{d} \sigma.
		\end{aligned}
	\end{equation}
	From \eqref{G5}, by using the fact that $\alpha>1$, it follows
	\begin{equation}
		\label{G7}
		\begin{aligned}
			0
			&\leq \l(a\alpha^q +b\alpha^{q\zeta}\l(\frac{1}{p}\l\| u\r\|_{1,p}^p+\frac{1}{q}\l\|\nabla u\r\|_{q,\mu}^q\r)^{\zeta-1}\r)
			\l(\l\| u^+\r\|_{1,p}^p+\l\|\nabla u^+\r\|_{q,\mu}^q\r)\\
			&\quad -\int_\Omega  f\l(x,\alpha u^+\r)\alpha u^+\,\mathrm{d} x-\int_{\partial \Omega}  g\l(x,\alpha u^+\r)\alpha u^+\,\mathrm{d} \sigma.
		\end{aligned}
	\end{equation}
	Then we divide \eqref{G7} by $\alpha^{q\zeta}$ and combine it with \eqref{G3}. This yields
	\begin{equation}
		\label{G8}
		\begin{aligned}
			&\int_{\Omega_+}  \l(\frac{f\l(x,\alpha u^+\r)}{\l(\alpha u^+\r)^{q\zeta-1}}-\frac{f\l(x, u^+\r)}{\l( u^+\r)^{q\zeta-1}}\r)\l(u^+\r)^{q\zeta}\,\mathrm{d} x\\
			&\quad+\int_{(\partial \Omega)_+}  \l(\frac{g\l(x,\alpha u^+\r)}{\l(\alpha u^+\r)^{q\zeta-1}}-\frac{g\l(x, u^+\r)}{\l( u^+\r)^{q\zeta-1}}\r)\l(u^+\r)^{q\zeta}\,\mathrm{d} \sigma\\
			&\leq a\l(\frac{1}{\alpha^{q(\zeta-1)}}-1\r)
			\l(\l\| u^+\r\|_{1,p}^p+\l\|\nabla u^+\r\|_{q,\mu}^q\r).
		\end{aligned}
	\end{equation}
	But this is a contradiction since the right-hand side of \eqref{G8} is nonpositive and the left-hand side is strictly positive due to  \eqref{H3}\eqref{H3v}. The case \textnormal{(ii)} works similarly. Indeed, if $\alpha<1$, $0 < \alpha \leq \beta$ and \eqref{G5} has the opposite sign, we can do it as above and get
	\eqref{G7} as well as \eqref{G8} in the opposite direction, respectively. But this is again a contradiction since in that case the right-hand side is nonnegative but the left-hand side is strictly negative, again because of \eqref{H3}\eqref{H3v}. Let us consider the case \textnormal{(iii)}. So, let $\beta>1$ and $0 < \alpha \leq \beta$ and suppose that
	\begin{equation}
		\label{G6}
		\begin{aligned}
			0
			&\leq\l\langle \mathcal{E}'\l(\alpha u^+-\beta u^-\r),-\beta u^-\r\rangle\\
			&=\l(a +b\l(\frac{1}{p}\l\|\alpha u^+-\beta u^-\r\|_{1,p}^p+\frac{1}{q}\l\|\nabla \l(\alpha u^+-\beta u^-\r)\r\|_{q,\mu}^q\r)^{\zeta-1}\r)\\
			&\quad\times\l(\l\|\beta u^-\r\|_{1,p}^p+\l\|\nabla \l(\beta u^-\r)\r\|_{q,\mu}^q\r)\\
			&\quad -\int_\Omega  f\l(x,-\beta u^-\r)\l(-\beta u^-\r)\,\mathrm{d} x-\int_{\partial \Omega}  g\l(x,-\beta u^-\r)\l(-\beta u^-\r)\,\mathrm{d} \sigma.
		\end{aligned}
	\end{equation}
	Taking $\beta>1$ into account, it follows from \eqref{G6} that
	\begin{equation}
		\label{G9}
		\begin{aligned}
			0
			&\leq \l(a\beta^q +b\beta^{q\zeta}\l(\frac{1}{p}\l\| u\r\|_{1,p}^p+\frac{1}{q}\l\|\nabla u\r\|_{q,\mu}^q\r)^{\zeta-1}\r)
			\l(\l\| u^-\r\|_{1,p}^p+\l\|\nabla u^-\r\|_{q,\mu}^q\r)\\
			&\quad -\int_\Omega  f\l(x,-\beta u^-\r)\l(-\beta u^-\r)\,\mathrm{d} x
			-\int_{\partial \Omega}  g\l(x,-\beta u^-\r)\l(-\beta u^-\r)\,\mathrm{d} \sigma.
		\end{aligned}
	\end{equation}
	Now we divide \eqref{G9} by $\beta^{q\zeta}$ and combine it with \eqref{G4} in order to get
	\begin{equation}
		\label{G10}
		\begin{aligned}
			&\int_{\Omega_-}  \l(\frac{f\l(x,-u^-\r)}{\l(u^-\r)^{q\zeta-1}}-\frac{f\l(x,-\beta u^-\r)}{\l(\beta u^-\r)^{q\zeta-1}}\r)\l(u^-\r)^{q\zeta}\,\mathrm{d} x\\
			&\quad+\int_{(\partial \Omega)_-}  \l(\frac{g\l(x,-u^-\r)}{\l(u^-\r)^{q\zeta-1}}-\frac{g\l(x,-\beta u^-\r)}{\l(\beta u^-\r)^{q\zeta-1}}\r)\l(u^-\r)^{q\zeta}\,\mathrm{d} \sigma\\
			&\leq a\l(\frac{1}{\beta^{q(\zeta-1)}}-1\r)
			\l(\l\| u^-\r\|_{1,p}^p+\l\|\nabla u^-\r\|_{q,\mu}^q\r).
		\end{aligned}
	\end{equation}
	Due to \eqref{H3}\eqref{H3v}, the left-hand side of \eqref{G10} is strictly positive and the right-hand side of \eqref{G10} is nonpositive, so a contradiction. Finally, for the case \textnormal{(iv)}, let $\beta<1$ and $0 < \beta \leq \alpha$ and suppose via contradiction that \eqref{G6} is satisfied in the opposite direction. Arguing analogously as above, this gives \eqref{G9} in the opposite direction, which implies \eqref{G10} in the opposite direction. So we have again a contradiction due to the fact that the right-hand side is nonnegative and, because of \eqref{H3}\eqref{H3v}, the left-hand side is strictly negative. Part 2 is proved.

	{\bf Part 3:} In this part we claim that the pair $(\alpha_u,\beta_u)$ from Part 1 is indeed unique.

	First, we suppose that $u\in \mathcal{C}$. We have to show that $(\alpha_u,\beta_u)=(1,1)$ is the unique pair of numbers such that $\alpha_u u^+-\beta_u u^-\in \mathcal{C}$ is satisfied. Indeed, let $(\alpha_0,\beta_0)\in (0,\infty)\times (0,\infty)$ be such that $\alpha_0 u^+-\beta_0 u^-\in \mathcal{C}$ holds. If $0<\alpha_0\leq\beta_0$, the cases \textnormal{(ii)} and \textnormal{(iii)} from Part 2 give us $1 \leq \alpha_0\leq\beta_0 \leq 1$ meaning that $\alpha_0=\beta_0=1$. Also if $0<\beta_0\leq\alpha_0$, then the cases \textnormal{(i)} and \textnormal{(iv)} from Part 2 imply that $1 \leq \beta_0\leq\alpha_0 \leq 1$ and so $\alpha_0=\beta_0=1$.

	Now we suppose that $u\not\in \mathcal{C}$ with $u^{\pm}\neq0$. Arguing by contradiction, assume that there exist two pairs $(\alpha_1,\beta_1)$, $(\alpha_2,\beta_2)$ of positive numbers $\alpha_1, \alpha_2, \beta_1, \beta_2$ such that
	\begin{align*}
		\vartheta_1:=\alpha_1 u^+-\beta_1 u^-\in \mathcal{C}
		\quad\text{and}\quad
		\vartheta_2:=\alpha_2 u^+-\beta_2 u^-\in \mathcal{C}.
	\end{align*}
	Then we obtain
	\begin{align}
		\label{G11}
		\vartheta_2=\l(\frac{\alpha_2}{\alpha_1}\r)\alpha_1u^+-\l(\frac{\beta_2}{\beta_1}\r)\beta_1u^-
		=\l(\frac{\alpha_2}{\alpha_1}\r)\vartheta_1^+-\l(\frac{\beta_2}{\beta_1}\r)\vartheta_1^- \in \mathcal{C}.
	\end{align}
	As $\vartheta_1\in \mathcal{C}$, the first case of Part 3 guarantees that the pair $(1,1)$ satisfying $1\cdot \vartheta_1^+-1\cdot \vartheta_1^-\in \mathcal{C}$ is unique. Using this fact along with \eqref{G11} leads to
	\begin{align*}
		\frac{\alpha_2}{\alpha_1}=\frac{\beta_2}{\beta_1}=1.
	\end{align*}
	But this implies $\alpha_1=\alpha_2$ and $\beta_1=\beta_2$. Therefore, the uniqueness is shown.

	{\bf Part 4:} We are going to prove that the unique pair $(\alpha_u,\beta_u)$ from Part 1 and 3 is the unique maximum point of the function $\Lambda_u\colon [0,\infty)\times [0,\infty)\to \R$ defined by
	\begin{align*}
		\Lambda_u(\alpha,\beta)=\mathcal{E}\l(\alpha u^+-\beta u^-\r).
	\end{align*}

	The idea is as follows: We show first that $\Lambda_u$ has a maximum and then we will prove that it cannot be achieved at a boundary point of $[0,\infty)\times [0,\infty)$. This gives us the assertion of Part 4.

	To this end, let $\alpha, \beta \geq 1$ and suppose, without any loss of generality, that $\alpha \geq \beta\geq 1$. We obtain
	\begin{equation}
		\label{G12}
		\begin{aligned}
			&\frac{\Lambda_u(\alpha,\beta)}{\alpha^{q\zeta}}\\
			&=\frac{\mathcal{E}(\alpha u^+-\beta u^-)}{\alpha^{q\zeta}}\\
			&=\frac{a\l(\frac{1}{p}\l\|\alpha u^+-\beta u^-\r\|_{1,p}^p+\frac{1}{q}\l\|\nabla \l(\alpha u^+-\beta u^-\r)\r\|_{q,\mu}^q\r)}{\alpha^{q\zeta}}\\
			&\quad +\frac{\frac{b}{\zeta}\l(\frac{1}{p}\l\|\alpha u^+-\beta u^-\r\|_{1,p}^p+\frac{1}{q}\l\|\nabla \l(\alpha u^+-\beta u^-\r)\r\|_{q,\mu}^q\r)^\zeta}{\alpha^{q\zeta}}\\
			&\quad -\int_\Omega  \frac{F\l(x,\alpha u^+-\beta u^-\r)}{\alpha^{q\zeta}}\,\mathrm{d} x
			-\int_{\partial \Omega}  \frac{G\l(x,\alpha u^+-\beta u^-\r)}{\alpha^{q\zeta}}\,\mathrm{d} \sigma\\
			&\leq \frac{1}{\alpha^{q(\zeta-1)}} a\l(\frac{1}{p}\l\| u\r\|_{1,p}^p+\frac{1}{q}\l\|\nabla u\r\|_{q,\mu}^q\r)
			+\frac{b}{\zeta}\l(\frac{1}{p}\l\| u\r\|_{1,p}^p+\frac{1}{q}\l\|\nabla u\r\|_{q,\mu}^q\r)^\zeta\\
			&\quad -\int_{\Omega_+}  \frac{F\l(x,\alpha u^+\r)}{\l(\alpha u^+\r)^{q\zeta}}\l(u^+\r)^{q\zeta}\,\mathrm{d} x-\int_{\Omega_-}  \frac{F\l(x,-\beta u^-\r)}{\l|-\beta u^-\r|^{q\zeta}}\frac{\beta^{q\zeta} \l(u^-\r)^{q\zeta}}{\alpha^{q\zeta}}\,\mathrm{d} x\\
			&\quad -\int_{(\partial \Omega)_+}  \frac{G\l(x,\alpha u^+\r)}{\l(\alpha u^+\r)^{q\zeta}}\l(u^+\r)^{q\zeta}\,\mathrm{d} \sigma-\int_{(\partial \Omega)_-}  \frac{G\l(x,-\beta u^-\r)}{\l|-\beta u^-\r|^{q\zeta}}\frac{\beta^{q\zeta} \l(u^-\r)^{q\zeta}}{\alpha^{q\zeta}}\,\mathrm{d} \sigma.
		\end{aligned}
	\end{equation}
	In consideration of \eqref{superlinear-F}, we have the following statements
	\begin{equation}\label{new-3}
		\begin{aligned}
			&\lim_{\alpha\to \infty}\l(-\int_{\Omega_+}  \frac{F\l(x,\alpha u^+\r)}{\l(\alpha u^+\r)^{q\zeta}}\l(u^+\r)^{q\zeta}\,\mathrm{d} x\r) = -\infty,\\
			&\limsup_{\substack{\beta\to \infty \\ \alpha\geq \beta}}\l(-\int_{\Omega_-}  \frac{F\l(x,-\beta u^-\r)}{\l|-\beta u^-\r|^{q\zeta}}\frac{\beta^{q\zeta} \l(u^-\r)^{q\zeta}}{\alpha^{q\zeta}}\,\mathrm{d} x\r)\leq 0,\\
			&\lim_{\alpha\to \infty}\l(-\int_{(\partial \Omega)_+}  \frac{G\l(x,\alpha u^+\r)}{\l(\alpha u^+\r)^{q\zeta}}\l(u^+\r)^{q\zeta}\,\mathrm{d} \sigma\r) = -\infty,\\
			&\limsup_{\substack{\beta\to \infty \\ \alpha\geq \beta}}\l(-\int_{(\partial \Omega)_-}  \frac{G\l(x,-\beta u^-\r)}{\l|-\beta u^-\r|^{q\zeta}}\frac{\beta^{q\zeta} \l(u^-\r)^{q\zeta}}{\alpha^{q\zeta}}\,\mathrm{d} \sigma\r)\leq 0.
		\end{aligned}
	\end{equation}
	Combining \eqref{G12} and \eqref{new-3} yields
	\begin{align*}
		\lim_{|(\alpha,\beta)|\to\infty}\Lambda_u(\alpha,\beta)=-\infty.
	\end{align*}
	This shows that $\Lambda_u$ has a maximum.

	Next, we are going to prove that a maximum point of $\Lambda_u$ cannot be achieved on the boundary of $[0,\infty)\times [0,\infty)$. For this purpose, assume via contradiction that $(0,\beta_0)$ is a maximum point of $\Lambda_u$ with $\beta_0\geq 0$. Then we have for $\alpha> 0$
	\begin{align*}
		\Lambda_u(\alpha,\beta_0)
		 & =\mathcal{E}(\alpha u^+-\beta_0 u^-)\\
		 & =a\l(\frac{1}{p}\l\|\alpha u^+-\beta_0 u^-\r\|_{1,p}^p+\frac{1}{q}\l\|\nabla \l(\alpha u^+-\beta_0 u^-\r)\r\|_{q,\mu}^q\r)\\
		 & \quad +\frac{b}{\zeta}\l(\frac{1}{p}\l\|\alpha u^+-\beta_0 u^-\r\|_{1,p}^p+\frac{1}{q}\l\|\nabla \l(\alpha u^+-\beta_0 u^-\r)\r\|_{q,\mu}^q\r)^\zeta \\
		 & \quad -\int_\Omega  F\l(x,\alpha u^+-\beta_0 u^-\r)\,\mathrm{d} x
		 -\int_{\partial \Omega}  G\l(x,\alpha u^+-\beta_0 u^-\r)\,\mathrm{d} \sigma
	\end{align*}
	and
	\begin{align*}
		\frac{\partial \Lambda_u(\alpha,\beta_0)}{\partial \alpha}
		 & =a\alpha^{p-1} \l\| u^+\r\|_{1,p}^p+a\alpha^{q-1}\l\|\nabla u^+\r\|_{q,\mu}^q \\
		 & \quad +b\l(\frac{1}{p}\l\|\alpha u^+-\beta_0 u^-\r\|_{1,p}^p+\frac{1}{q}\l\|\nabla \l(\alpha u^+-\beta_0 u^-\r)\r\|_{q,\mu}^q\r)^{\zeta-1} \\
		 & \quad\times\l(\alpha^{p-1} \l\| u^+\r\|_{1,p}^p+\alpha^{q-1}\l\|\nabla u^+\r\|_{q,\mu}^q\r) \\
		 & \quad -\int_\Omega  f\l(x,\alpha u^+\r)u^+\,\mathrm{d} x-\int_{\partial \Omega}  g\l(x,\alpha u^+\r)u^+\,\mathrm{d} \sigma.
	\end{align*}
	Therefore we obtain
	\begin{equation}\label{new-4}
		\begin{aligned}
			\frac{\partial \Lambda_u(\alpha,\beta_0)}{\partial \alpha}
			&\geq \frac{b}{p^{\zeta-1}}\alpha^{p\zeta-1} \l\| u^+\r\|_{1,p}^{p\zeta}-\int_\Omega  f\l(x,\alpha u^+\r)u^+\,\mathrm{d} x\\
			&\quad -\int_{\partial \Omega}  g\l(x,\alpha u^+\r)u^+\,\mathrm{d} \sigma.
		\end{aligned}
	\end{equation}
	Then we divide \eqref{new-4} by $\alpha^{p\zeta-1}>0$ which leads to
	\begin{align*}
		\frac{1}{\alpha^{p\zeta-1}}\dfrac{\partial \Lambda_u(\alpha,\beta_0)}{\partial \alpha}
		&\geq \frac{b}{p^{\zeta-1}} \l\| u^+\r\|_{1,p}^{p\zeta}-\int_{\Omega_+}  \frac{f\l(x,\alpha u^+\r)}{\l(\alpha u^+\r)^{p\zeta-1}}\l(u^+\r)^{p\zeta}\,\mathrm{d} x\\
		&\quad-\int_{(\partial \Omega)_+}  \frac{g\l(x,\alpha u^+\r)}{\l(\alpha u^+\r)^{p\zeta-1}}\l(u^+\r)^{p\zeta}\,\mathrm{d} \sigma,
	\end{align*}
	where the second and the third terms on the right-hand side converge to zero as $\alpha\to 0$ because of assumption \eqref{H3}\eqref{H3iii}. Thus,
	\begin{align*}
		\frac{\partial \Lambda_u(\alpha,\beta_0)}{\partial \alpha}>0  \quad\text{for }\alpha>0 \text{ sufficiently small}.
	\end{align*}
	Hence, $\Lambda_u$ is increasing with respect to $\alpha$ which is a contradiction. Analogously, we can prove that $\Lambda_u$ cannot achieve its global maximum at a point $(\alpha_0,0)$ with $\alpha_0\geq 0$. In summary, we have seen that the global maximum must be achieved in $(0,L)^2$ for some $L>0$. But this means it is a critical point of $\Lambda_u$ and taking Parts 1 and 3 into account, we know that the only critical point of $\Lambda_u$ is $(\alpha_u,\beta_u)$. This finishes the proof.
\end{proof}

\begin{proposition}
	\label{proposition_nodal_unique-pair-less-one}
	Let hypotheses \eqref{H1}--\eqref{H3} be satisfied and let $u \in  W^{1,\mathcal{H}}(\Omega)$ with $u^{\pm}\neq 0$ such that
	\begin{align}\label{new-5}
		\langle \mathcal{E}'(u), u^+\rangle \leq 0
		\quad\text{and}\quad
		\langle \mathcal{E}'(u),- u^-\rangle \leq 0.
	\end{align}
	Then the unique pair $(\alpha_u,\beta_u)$ obtained in Proposition \ref{proposition_nodal_unique-pair} satisfies $0<\alpha_u,\beta_u \leq 1$. Alternatively, if
	\begin{align}\label{new-6}
		\langle \mathcal{E}'(u), u^+\rangle \geq 0
		\quad\text{and}\quad
		\langle \mathcal{E}'(u),- u^-\rangle \geq 0,
	\end{align}
	then $\alpha_u,\beta_u \geq 1$.
\end{proposition}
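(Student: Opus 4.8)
The plan is to avoid any fresh computation and instead funnel everything through the sign information (i)--(iv) already proved in Proposition \ref{proposition_nodal_unique-pair}, but applied to the point $v:=\alpha_u u^+-\beta_u u^-\in\mathcal{C}$ that Proposition \ref{proposition_nodal_unique-pair} produces, rather than to $u$ itself.

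First I would record the elementary bookkeeping. Since $\alpha_u,\beta_u>0$ we have $v^+=\alpha_u u^+$ and $v^-=\beta_u u^-$, whence $u^+=\tfrac{1}{\alpha_u}v^+$, $u^-=\tfrac{1}{\beta_u}v^-$ and
\[
u=\frac{1}{\alpha_u}v^+-\frac{1}{\beta_u}v^-.
\]
Thus $u$ is precisely of the form $\alpha v^+-\beta v^-$ with multipliers $\alpha:=1/\alpha_u$, $\beta:=1/\beta_u$, and in addition $\alpha v^+=u^+$ and $-\beta v^-=-u^-$. Consequently the two pairings in the hypotheses translate verbatim,
\[
\langle \mathcal{E}'(\alpha v^+-\beta v^-),\alpha v^+\rangle=\langle \mathcal{E}'(u),u^+\rangle,\qquad \langle \mathcal{E}'(\alpha v^+-\beta v^-),-\beta v^-\rangle=\langle \mathcal{E}'(u),-u^-\rangle.
\]
Because $v\in\mathcal{C}$, the sign information (i)--(iv) of Proposition \ref{proposition_nodal_unique-pair} is available for $v$ with \emph{any} positive multipliers, in particular with $\alpha=1/\alpha_u$ and $\beta=1/\beta_u$. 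Note the reciprocal flip: $\alpha_u>1\Leftrightarrow\alpha<1$, $\beta_u>1\Leftrightarrow\beta<1$, and $\alpha_u\geq\beta_u\Leftrightarrow\alpha\leq\beta$.

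For the first assertion I would assume \eqref{new-5}, so both displayed pairings are $\leq 0$, and aim at $\alpha,\beta\geq 1$. Distinguish the two exhaustive cases $\alpha\leq\beta$ and $\beta\leq\alpha$. If $\alpha\leq\beta$ and, for contradiction, $\alpha<1$, then $0<\alpha\leq\beta$ with $\alpha<1$ is exactly case (ii), forcing $\langle \mathcal{E}'(\alpha v^+-\beta v^-),\alpha v^+\rangle>0$ and contradicting the first inequality in \eqref{new-5}; hence $\alpha\geq 1$ and then $\beta\geq\alpha\geq 1$. If instead $\beta\leq\alpha$ and $\beta<1$, then $0<\beta\leq\alpha$ with $\beta<1$ is case (iv), giving $\langle \mathcal{E}'(\alpha v^+-\beta v^-),-\beta v^-\rangle>0$ and contradicting the second inequality in \eqref{new-5}; hence $\beta\geq 1$ and $\alpha\geq\beta\geq 1$. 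Either way $\alpha,\beta\geq 1$, i.e.\ $0<\alpha_u,\beta_u\leq 1$.

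The second assertion is entirely symmetric. Assuming \eqref{new-6}, both pairings are $\geq 0$ and the goal is $\alpha,\beta\leq 1$. In the case $\beta\leq\alpha$, the assumption $\alpha>1$ lands in case (i) (which requires $0<\beta\leq\alpha$, $\alpha>1$) and yields a pairing $<0$, contradicting \eqref{new-6}, so $\alpha\leq 1$ and $\beta\leq\alpha\leq 1$; in the case $\alpha\leq\beta$, the assumption $\beta>1$ lands in case (iii) and again produces a pairing $<0$, so $\beta\leq 1$ and $\alpha\leq\beta\leq 1$. Hence $\alpha,\beta\leq 1$, i.e.\ $\alpha_u,\beta_u\geq 1$. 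The only genuinely delicate point is the reciprocal/index bookkeeping of the second paragraph: one must match the comparison $\alpha_u\gtrless\beta_u$ to the hypothesis $\alpha\lessgtr\beta$ demanded by the appropriate case, and check that the \emph{strict} conclusions of (i)--(iv) truly contradict the non-strict hypotheses. Once this dictionary is fixed the argument is immediate.
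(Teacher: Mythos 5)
Your proof is correct, and it takes a genuinely different route from the paper's. The paper reproves the relevant inequality from scratch: it writes out $\l\langle\mathcal{E}'\l(\alpha_u u^+-\beta_u u^-\r),\alpha_u u^+\r\rangle=0$, combines it with $\langle\mathcal{E}'(u),u^+\rangle\leq 0$, assumes $\alpha_u>1$, divides by $\alpha_u^{q\zeta}$ and derives a contradiction from \eqref{H3}\eqref{H3v} --- essentially repeating the computation of Part 2 of Proposition \ref{proposition_nodal_unique-pair} with one equality replaced by an inequality. You instead observe that $v:=\alpha_u u^+-\beta_u u^-\in\mathcal{C}$ satisfies $v^{\pm}=\alpha_u u^+$, $\beta_u u^-$ respectively, so that $u=\frac{1}{\alpha_u}v^+-\frac{1}{\beta_u}v^-$ and the hypotheses \eqref{new-5}, \eqref{new-6} become sign conditions on $\l\langle\mathcal{E}'(\alpha v^+-\beta v^-),\alpha v^+\r\rangle$ and $\l\langle\mathcal{E}'(\alpha v^+-\beta v^-),-\beta v^-\r\rangle$ with $\alpha=1/\alpha_u$, $\beta=1/\beta_u$; the already-established sign information (i)--(iv), applied to $v\in\mathcal{C}$, then forces $\alpha,\beta\geq 1$ (resp.\ $\leq 1$) with no new computation. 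Your case analysis ($\alpha\leq\beta$ versus $\beta\leq\alpha$) is exhaustive, the reciprocal flip of the ordering is handled correctly, and the strict conclusions of (i)--(iv) do contradict the non-strict hypotheses, so the argument is complete. What your approach buys is economy and a cleaner logical structure (the proposition becomes a pure corollary of the sign information); what the paper's direct computation buys is independence from the precise packaging of (i)--(iv) and an argument that would survive if those statements had only been proved for the specific element rather than for arbitrary positive multipliers --- but as stated in Proposition \ref{proposition_nodal_unique-pair} they hold for any $u\in\mathcal{C}$ and any admissible $(\alpha,\beta)$, so your reduction is fully justified.
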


\begin{proof}
	We first assume that $0<\beta_u\leq \alpha_u$. Since $\alpha_u u^+-\beta_uu^-$ belongs to $\mathcal{C}$, it holds by definition that $\l\langle\mathcal{E}'\l(\alpha_u u^+-\beta_u u^-\r),\alpha_u u^+ \r\rangle=0$ which is equivalent to
	\begin{equation}
		\label{G13}
		\begin{aligned}
			&\l(a+b\l(\frac{1}{p}\l\|\alpha_u u^+-\beta_u u^- \r\|_{1,p}^p+\frac{1}{q}\l\|\nabla \l(\alpha_u u^+-\beta_u u^-\r) \r\|_{q,\mu}^q\r)^{\zeta-1}\r) \\
		& \quad\times \l(\l\|\alpha_u u^+ \r\|_{1,p}^p+\l\|\nabla \l(\alpha_u u^+\r) \r\|_{q,\mu}^q\r) \\
		& \quad -\int_\Omega  f\l(x,\alpha_u u^+\r)\alpha_u u^+\,\mathrm{d} x-\int_{\partial\Omega}  g\l(x,\alpha_u u^+\r)\alpha_u u^+\,\mathrm{d} \sigma =0.
		\end{aligned}
	\end{equation}
	From \eqref{new-5} we have $\langle \mathcal{E}'(u),u^+\rangle \leq 0$, that is,
	\begin{equation}
		\label{G14}
		\begin{aligned}
			&\l(a+b\l(\frac{1}{p}\l\|u \r\|_{1,p}^p+\frac{1}{q}\l\|\nabla u \r\|_{q,\mu}^q\r)^{\zeta-1}\r)
			\l(\l\|u^+ \r\|_{1,p}^p+\l\|\nabla u^+\r\|_{q,\mu}^q\r)\\
			&-\int_\Omega  f\l(x,u^+\r)u^+\,\mathrm{d} x-\int_{\partial\Omega}  g\l(x, u^+\r) u^+\,\mathrm{d} \sigma \leq 0.
		\end{aligned}
	\end{equation}
	Supposing by contradiction that $\alpha_u > 1$ and dividing \eqref{G13} by $\alpha_u^{q\zeta}$ gives
	\begin{equation}
		\label{G15}
		\begin{aligned}
			0
			&\leq \l(a\frac{1}{\alpha_u^{q(\zeta-1)}}+b\l(\frac{1}{p}\l\| u\r\|_{1,p}^p+\frac{1}{q}\l\|\nabla u \r\|_{q,\mu}^q\r)^{\zeta-1}\r)\\
			&\quad\times\l(\l\| u^+\r\|_{1,p}^p+\l\|\nabla u^+ \r\|_{q,\mu}^q\r)
			\\
			&\quad -\int_{\Omega_+}  \frac{f\l(x,\alpha_u u^+\r)}{\l(\alpha_u u^+\r)^{q\zeta-1}} \l(u^+\r)^{q\zeta}\,\mathrm{d} x
			-\int_{(\partial\Omega)_+}  \frac{g\l(x,\alpha_u u^+\r)}{\l(\alpha_u u^+\r)^{q\zeta-1}} \l(u^+\r)^{q\zeta}\,\mathrm{d} \sigma.
		\end{aligned}
	\end{equation}
	Now, combining \eqref{G14} and \eqref{G15}, it follows that
	\begin{equation*}
		\begin{aligned}
			&\int_{\Omega_+}  \l(\frac{f\l(x,\alpha_u u^+\r)}{\l(\alpha_u u^+\r)^{q\zeta-1}} - \frac{f\l(x,u^+\r)}{\l(u^+\r)^{q\zeta-1}} \r) \l(u^+\r)^{q\zeta}\,\mathrm{d} x\\
			&+\int_{(\partial\Omega)_+}  \l(\frac{g\l(x,\alpha_u u^+\r)}{\l(\alpha_u u^+\r)^{q\zeta-1}} - \frac{g\l(x,u^+\r)}{\l(u^+\r)^{q\zeta-1}} \r) \l(u^+\r)^{q\zeta}\,\mathrm{d} \sigma\\
			&\leq a\l(\frac{1}{\alpha_u^{q(\zeta-1)}}-1\r)
			\l(\l\| u^+\r\|_{1,p}^p+\l\|\nabla u^+ \r\|_{q,\mu}^q\r).
		\end{aligned}
	\end{equation*}
	But this is a contradiction due to assumption \eqref{H3}\eqref{H3v} (see also \eqref{G8} and the arguments there). Hence, we have $0<\beta_u\leq \alpha_u\le1$. If $0<\alpha_u\leq \beta_u$, we use $\l\langle\mathcal{E}'\l(\alpha_u u^+-\beta_u u^-\r),-\beta_u u^- \r\rangle=0$ and $\langle \mathcal{E}'(u),- u^-\rangle \leq 0$ and work as above assuming $\beta_u > 1$ to get
	\begin{equation*}
		\begin{aligned}
			&-\int_{\Omega_-}  \l(\frac{f\l(x,-\beta_u u^-\r)}{\l(\beta_u u^-\r)^{q\zeta-1}} - \frac{f\l(x,-u^-\r)}{\l(u^-\r)^{q\zeta-1}} \r) \l(u^-\r)^{q\zeta}\,\mathrm{d} x\\
			&-\int_{(\partial\Omega)_-}  \l(\frac{g\l(x,-\beta_u u^-\r)}{\l(\beta_u u^-\r)^{q\zeta-1}} - \frac{g\l(x,-u^-\r)}{\l(u^-\r)^{q\zeta-1}} \r) \l(u^-\r)^{q\zeta}\,\mathrm{d} \sigma\\
			&\leq a\l(\frac{1}{\beta_u^{(\zeta-1)q}}-1\r)
			\l(\l\| u^-\r\|_{1,p}^p+\l\|\nabla u^- \r\|_{q,\mu}^q\r).
		\end{aligned}
	\end{equation*}
	This is again a contradiction and it holds $0<\alpha_u\leq \beta_u \leq 1$. The case \eqref{new-6} can be done in a similar way, just using the inequalities in the opposite direction.
\end{proof}

The following proposition will be useful for later considerations.

\begin{proposition}
	\label{proposition_auxiliary_result}
	Let hypotheses \eqref{H1}--\eqref{H3} be satisfied and let $u\in W^{1,\mathcal{H}}(\Omega)$ with $\|u\|\leq 1$. Then there exist constants $\hat{C}, \tilde{C}_1,\tilde{C}_2>0$ such that
	\begin{align*}
		\mathcal{E}(u) \geq
		\hat{C} \|u\|^{q\zeta} - \tilde{C}_1 \|u\|^{r_1} - \tilde{C}_2 \|u\|^{r_2}.
	\end{align*}
\end{proposition}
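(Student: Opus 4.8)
The plan is to estimate the three pieces of $\mathcal{E}(u)=\Phi[\Xi(u)]-\int_\Omega F(x,u)\,\mathrm{d}x-\int_{\partial\Omega}G(x,u)\,\mathrm{d}\sigma$ separately and then to absorb the lower-order contributions of the perturbations into the nonlocal Kirchhoff term. First I would bound $\Phi[\Xi(u)]$ from below. Since $a\ge 0$ and $p<q$ by \eqref{H1}, comparing $\Xi(u)$ termwise with the modular $\rho(u)$ gives $\Xi(u)\ge\tfrac1q\rho(u)$, whence
\[
\Phi[\Xi(u)]=a\Xi(u)+\frac b\zeta\Xi(u)^\zeta\ge\frac b\zeta\Xi(u)^\zeta\ge\frac{b}{\zeta q^\zeta}\rho(u)^\zeta .
\]

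Next I would control the superlinear terms. From \eqref{H3}\eqref{H3i} and \eqref{H3}\eqref{H3iii}, for every $\varepsilon>0$ there is $C_\varepsilon>0$ with $|F(x,s)|\le\varepsilon|s|^{p\zeta}+C_\varepsilon|s|^{r_1}$ and $|G(x,s)|\le\varepsilon|s|^{p\zeta}+C_\varepsilon|s|^{r_2}$. The crucial point is that the lowest-order parts must be measured against $\rho(u)^\zeta$, not merely against $\|u\|^{p\zeta}$. Since $\rho(u)\ge\|u\|_{1,p}^p$ and $p\zeta<q\zeta<p_*<p^*$ by \eqref{H2}, the Sobolev embedding $W^{1,p}(\Omega)\hookrightarrow L^{p\zeta}(\Omega)$ and the trace embedding $W^{1,p}(\Omega)\hookrightarrow L^{p\zeta}(\partial\Omega)$ yield constants $C_3,C_4>0$ with
\[
\int_\Omega|u|^{p\zeta}\,\mathrm{d}x\le C_3\|u\|_{1,p}^{p\zeta}\le C_3\,\rho(u)^\zeta,\qquad \int_{\partial\Omega}|u|^{p\zeta}\,\mathrm{d}\sigma\le C_4\,\rho(u)^\zeta,
\]
using $\zeta\ge1$. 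For the higher-order parts, Proposition \ref{proposition_embeddings} gives $\int_\Omega|u|^{r_1}\,\mathrm{d}x\le C_5\|u\|^{r_1}$ and $\int_{\partial\Omega}|u|^{r_2}\,\mathrm{d}\sigma\le C_6\|u\|^{r_2}$ since $r_1<p^*$, $r_2<p_*$.

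Combining these estimates gives $\int_\Omega F+\int_{\partial\Omega}G\le\varepsilon(C_3+C_4)\rho(u)^\zeta+C_\varepsilon C_5\|u\|^{r_1}+C_\varepsilon C_6\|u\|^{r_2}$. Choosing $\varepsilon$ so small that $\varepsilon(C_3+C_4)\le\frac{b}{2\zeta q^\zeta}$, the two $\rho(u)^\zeta$-contributions are absorbed into the lower bound for $\Phi[\Xi(u)]$, leaving
\[
\mathcal{E}(u)\ge\frac{b}{2\zeta q^\zeta}\rho(u)^\zeta-C_\varepsilon C_5\|u\|^{r_1}-C_\varepsilon C_6\|u\|^{r_2}.
\]
Finally, as $\|u\|\le1$, Proposition \ref{proposition_modular_properties2}(iii) gives $\rho(u)\ge\|u\|^q$, hence $\rho(u)^\zeta\ge\|u\|^{q\zeta}$, and the assertion follows with $\hat C=\frac{b}{2\zeta q^\zeta}$, $\tilde C_1=C_\varepsilon C_5$ and $\tilde C_2=C_\varepsilon C_6$.

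I expect the main obstacle to be precisely the absorption step in the degenerate regime $a=0$, where no coercive linear term $a\Xi(u)$ is available: since \eqref{H3}\eqref{H3iii} controls $f$ and $g$ near zero only at the exponent $p\zeta<q\zeta$, the naive bound $\int_\Omega|u|^{p\zeta}\le C\|u\|^{p\zeta}$ coming from $W^{1,\mathcal{H}}(\Omega)\hookrightarrow L^{p\zeta}(\Omega)$ produces a term $\|u\|^{p\zeta}$ which, for $\|u\|\le1$, dominates the positive $\|u\|^{q\zeta}$ and ruins the inequality. The decisive trick is to route the $L^{p\zeta}$-bound through the pure $p$-norm $\|u\|_{1,p}$ together with $\rho(u)\ge\|u\|_{1,p}^p$, so that $\int_\Omega|u|^{p\zeta}\le C\,\rho(u)^\zeta$ matches exactly the scaling $\Phi[\Xi(u)]\ge c\,\rho(u)^\zeta$ of the Kirchhoff term and can be absorbed.
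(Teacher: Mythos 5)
Your proof is correct and follows essentially the same route as the paper: the same growth estimates on $F,G$ from \eqref{H3}\eqref{H3i},\eqref{H3iii}, the same decisive step of routing the $|s|^{p\zeta}$-terms through the embeddings of $W^{1,p}(\Omega)$ into $L^{p\zeta}(\Omega)$ and $L^{p\zeta}(\partial\Omega)$ so that they scale like $\rho(u)^\zeta$ and can be absorbed into the Kirchhoff term, and the same final use of Proposition \ref{proposition_modular_properties2}(iii). The only (harmless) difference is cosmetic: you bound $\Xi(u)^\zeta\geq q^{-\zeta}\rho(u)^\zeta$ directly, whereas the paper first splits $\Xi(u)^\zeta$ superadditively into $p$- and $q$-parts, absorbs the $\varepsilon$-term into the $\|u\|_{1,p}^{p\zeta}$ piece, and then recombines via $2^{1-\zeta}(s+t)^\zeta\leq s^\zeta+t^\zeta$.
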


\begin{proof}
	With regard to assumptions \eqref{H3}\eqref{H3i} and \eqref{H3}\eqref{H3iii}, for a given $\varepsilon>0$, there exist constants $\hat{c}_1=\hat{c}_1(\varepsilon)$, $\hat{c}_2=\hat{c}_2(\varepsilon)>0$ such that
	\begin{align}
		F(x,s) &\leq \frac{\varepsilon}{p\zeta}|s|^{p\zeta} +\hat{c}_1|s|^{r_1} \quad \text{for a.a.\,}x\in \Omega,\label{l1}\\
		G(x,s) &\leq \frac{\varepsilon}{p\zeta}|s|^{p\zeta} +\hat{c}_2|s|^{r_2} \quad \text{for a.a.\,}x\in \partial\Omega\label{l2}
	\end{align}
	and for all $s\in\R$.
	Let $u \in  W^{1, \mathcal{H} }_0 ( \Omega )$ with $\|u\|\leq 1$. Then, by applying \eqref{l1}, \eqref{l2} and Proposition \ref{proposition_embeddings}(i), (ii), we obtain
	\begin{align*}
		\mathcal{E}(u)
		&=\Phi\l[\Xi(u)\r]-\int_\Omega  F(x,u)\,\mathrm{d} x-\int_{\partial \Omega}  G(x,u)\,\mathrm{d} \sigma\\
		&\geq \frac{b}{\zeta}\l(\frac{1}{p^\zeta}\| u\|_{1,p}^{p\zeta}+\frac{1}{q^\zeta}\|\nabla u\|_{q,\mu}^{q\zeta}\r)
		-\frac{\varepsilon}{p\zeta} \|u\|_{p\zeta}^{p\zeta} - \hat{c}_1 \|u\|_{r_1}^{r_1} \\
		&\quad -\frac{\varepsilon}{p\zeta} \|u\|_{p\zeta,\partial\Omega}^{p\zeta} - \hat{c}_2 \|u\|_{r_2,\partial\Omega}^{r_2}\\
		&\geq \l(\frac{b}{p^{\zeta}\zeta}-\varepsilon\left(\frac{C^{p\zeta}_\Omega+C^{p\zeta}_{\partial \Omega}}{p\zeta}\r)\r)\| u\|_{1,p}^{p\zeta}+\frac{b}{q^{\zeta}\zeta}\|\nabla u\|_{q,\mu}^{q\zeta} \\
		&\quad -\hat{c}_1 \l(C_\Omega^\mathcal{H}\r)^{r_1}\|u\|^{r_1}-\hat{c}_2 \l(C_{\partial \Omega}^\mathcal{H}\r)^{r_2}\|u\|^{r_2}\\
		& \geq \min \l\{\l(\frac{b}{p^{\zeta}\zeta}-\varepsilon\left(\frac{C^{p\zeta}_\Omega+C^{p\zeta}_{\partial \Omega}}{p\zeta}\r)\r),\frac{b}{q^{\zeta}\zeta}\r\} \frac{1}{2^{\zeta-1}} \left[ \rho(u) \right]^\zeta \\
		&\quad -\hat{c}_1 \l(C_\Omega^\mathcal{H}\r)^{r_1}\|u\|^{r_1}-\hat{c}_2 \l(C_{\partial \Omega}^\mathcal{H}\r)^{r_2}\|u\|^{r_2}
	\end{align*}
	with $C_\Omega$, $C_\Omega^{\mathcal{H}}$, $C_{\partial \Omega}$, $C_{\partial \Omega}^{\mathcal{H}}$ being the embedding constants of the continuous embeddings $W^{1,p}(\Omega)\to L^{p\zeta}(\Omega)$, $ W^{1, \mathcal{H} } ( \Omega ) \to L^{r_1}(\Omega)$, $W^{1,p}(\Omega)\to L^{p\zeta}(\partial\Omega)$ and $ W^{1, \mathcal{H} }( \Omega ) \to L^{r_2}(\partial\Omega)$, respectively, where we have used the inequality $2^{1-\zeta}(s+t)^\zeta \leq s^\zeta + t^\zeta$ for all $s,t \geq 0$ in the last step. Choosing $\varepsilon \in \left(0, \frac{b}{p^{\zeta-1}(C_\Omega^{p\zeta}+C_{\partial \Omega}^{p\zeta})}\right)$ and making use of Proposition \ref{proposition_modular_properties2}(iii) yields the required statement.
\end{proof}

Let $\omega=\inf\limits_{\mathcal{C}}  \mathcal{E}$.

\begin{proposition}\label{proposition_positive_infimum_1}
	Let hypotheses \eqref{H1}--\eqref{H3} be satisfied. Then $\omega>0$. In particular, the infimum is finite.
\end{proposition}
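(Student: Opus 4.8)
The plan is to deduce the positivity of $\omega$ from the quantitative estimate of Proposition \ref{proposition_auxiliary_result}, which is only valid on the ball $\{\|u\|\le 1\}$, by rescaling each element of $\mathcal{C}$ into that ball \emph{without increasing the energy}. The device that makes this possible is the maximality statement in Proposition \ref{proposition_nodal_unique-pair}, which says precisely that $(1,1)$ maximizes $(t_1,t_2)\mapsto\mathcal{E}(t_1u^+-t_2u^-)$ whenever $u\in\mathcal{C}$.

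First I would observe that $\mathcal{C}\neq\emptyset$, so that $\omega<+\infty$: starting from any $u\in W^{1,\mathcal{H}}(\Omega)$ with $u^{\pm}\neq 0$ (such functions clearly exist), Proposition \ref{proposition_nodal_unique-pair} produces positive numbers $(\alpha_u,\beta_u)$ with $\alpha_u u^+-\beta_u u^-\in\mathcal{C}$, and evaluating $\mathcal{E}$ there gives a finite upper bound for $\omega$. Next, invoking Remark \ref{remark-H2}, which guarantees $q\zeta<\min\{r_1,r_2\}$, I would fix a radius $\rho_0\in(0,1)$ small enough that
\[
	h(\rho_0):=\hat{C}\rho_0^{q\zeta}-\tilde{C}_1\rho_0^{r_1}-\tilde{C}_2\rho_0^{r_2}>0,
\]
with $\hat{C},\tilde{C}_1,\tilde{C}_2>0$ the constants of Proposition \ref{proposition_auxiliary_result}; this is possible since $h(\rho)/\rho^{q\zeta}\to\hat{C}>0$ as $\rho\to 0^+$, so the $q\zeta$-term dominates for small $\rho_0$.

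The core step is then the following. Given any $u\in\mathcal{C}$, set $t:=\rho_0/\|u\|>0$ (note $u\neq 0$, since $u^{\pm}\neq 0$). Applying the maximality part of Proposition \ref{proposition_nodal_unique-pair} with $t_1=t_2=t$ yields $\mathcal{E}(u)\ge\mathcal{E}(tu^+-tu^-)=\mathcal{E}(tu)$. Since $\|tu\|=\rho_0\le 1$, Proposition \ref{proposition_auxiliary_result} applies to $tu$ and gives $\mathcal{E}(tu)\ge h(\rho_0)$. Combining the two estimates, $\mathcal{E}(u)\ge h(\rho_0)>0$ for every $u\in\mathcal{C}$, whence $\omega=\inf_{\mathcal{C}}\mathcal{E}\ge h(\rho_0)>0$; together with $\omega<+\infty$ this also settles finiteness.

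The only genuine obstacle is conceptual rather than computational: one must recognize that the inequality $\mathcal{E}(u)\ge\mathcal{E}(tu)$ furnished by Proposition \ref{proposition_nodal_unique-pair} is exactly what allows one to leave the ``wrong'' region $\|u\|>1$ and still exploit the clean coercive-type bound of Proposition \ref{proposition_auxiliary_result}. The nonlocal coupling inside $\phi(\Xi(u))$ — which prevents a direct lower bound on $\|u^{\pm}\|$ over $\mathcal{C}$, because $\phi(\Xi(u))$ mixes the contributions of $u^+$ and $u^-$ — is precisely the difficulty that this rescaling trick circumvents, so no separate lower bound on the norm of elements of $\mathcal{C}$ is needed.
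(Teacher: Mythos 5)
Your proposal is correct and follows essentially the same route as the paper: both use the smallness estimate of Proposition \ref{proposition_auxiliary_result} together with $q\zeta<\min\{r_1,r_2\}$ to get a uniform positive lower bound on a small sphere $\|v\|=\tau\in(0,1)$, and then use the maximality statement of Proposition \ref{proposition_nodal_unique-pair} to transfer that bound to an arbitrary $u\in\mathcal{C}$ via $\mathcal{E}(u)\geq\mathcal{E}(t_1u^+-t_2u^-)$. Your explicit choice $t_1=t_2=\rho_0/\|u\|$ is just a concrete instance of the paper's choice of $\alpha_0,\beta_0$ with $\|\alpha_0u^+-\beta_0u^-\|=\tau$, and the added remark on finiteness via nonemptiness of $\mathcal{C}$ is a harmless supplement.
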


\begin{proof}
	Recall that $q\zeta < \min\{r_1,r_2\}$. Then from Proposition \ref{proposition_auxiliary_result} we know that
	\begin{align}
		\label{new-5465415}
		\mathcal{E}(u) \geq C>0 \quad\text{for all }u \in  W^{1,\mathcal{H}}(\Omega) \text{ with }\|u\|=\tau
	\end{align}
	for some $\tau\in (0,1)$ sufficiently small. Let $u \in \mathcal{C}$ and choose $\alpha_0, \beta_0>0$ such that $\|\alpha_0 u^+-\beta_0u^-\|=\tau$. Then, using \eqref{new-5465415} and Proposition \ref{proposition_nodal_unique-pair}, it follows that
	\begin{align*}
		0<C \leq \mathcal{E}(\alpha_0 u^+-\beta_0u^-) \leq \mathcal{E}(u).
	\end{align*}
	As $u \in \mathcal{C}$ was arbitrary, we conclude that $\omega>0$.
\end{proof}

In the next proposition we show that $\mathcal{E}$ restricted to the constraint set $\mathcal{C}$ is sequentially coercive, that is, $\mathcal{E}(u_n) \to \infty$ for any sequence $\{u_n\}_{n\in\N}\subseteq \mathcal{C}$ with $\|u_n\|\to+\infty$.

\begin{proposition}
	\label{proposition_nodal_positive-infimum}
	Let hypotheses \eqref{H1}--\eqref{H3} be satisfied. Then $\mathcal{E}|_{\mathcal{C}}$ is sequentially coercive.
\end{proposition}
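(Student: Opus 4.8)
The plan is to argue by contradiction. Suppose there is a sequence $\{u_n\}_{n\in\N}\subseteq\mathcal{C}$ with $\|u_n\|\to+\infty$ but $\mathcal{E}(u_n)\le M$ for some $M>0$ and all $n$. Setting $v_n=u_n/\|u_n\|$, so that $\|v_n\|=1$ and hence $\rho(v_n)=1$ by Proposition~\ref{proposition_modular_properties2}(ii), I would use the reflexivity of $W^{1,\mathcal{H}}(\Omega)$ together with the compact embeddings of Proposition~\ref{proposition_embeddings} to pass to a subsequence such that $v_n\rightharpoonup v$ in $W^{1,\mathcal{H}}(\Omega)$ and $v_n\to v$ strongly in $L^{r_1}(\Omega)$, $L^{r_2}(\partial\Omega)$, $L^{q\zeta}(\Omega)$, $L^{q\zeta}(\partial\Omega)$ (all admissible since $q\zeta<p_*<p^*$) and a.e.\ in $\Omega$ and on $\partial\Omega$. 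The argument then splits according to whether the weak limit $v$ vanishes.

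In the case $v\neq 0$ I expect to derive $\mathcal{E}(u_n)\to-\infty$, which contradicts $\mathcal{E}|_{\mathcal{C}}\ge\omega>0$ from Proposition~\ref{proposition_positive_infimum_1}. Using the superlinearity \eqref{superlinear-F} together with the growth bound \eqref{H3}\eqref{H3i}, I would first record that for every $L>0$ there is $C_L>0$ with $F(x,s)\ge L|s|^{q\zeta}-C_L$ and $G(x,s)\ge L|s|^{q\zeta}-C_L$ (obtained by combining \eqref{superlinear-F} on $\{|s|\ge R_L\}$ with the subcritical bound on $\{|s|\le R_L\}$). Since $\Phi[\Xi(u_n)]\le C_1\|u_n\|^{q\zeta}$ for $\|u_n\|\ge 1$, because $\Xi(u_n)\le\tfrac1p\rho(u_n)\le\tfrac1p\|u_n\|^q$ by Proposition~\ref{proposition_modular_properties2}(iv) and $\zeta\ge1$, dividing $\mathcal{E}(u_n)$ by $\|u_n\|^{q\zeta}$ yields $\mathcal{E}(u_n)/\|u_n\|^{q\zeta}\le C_1-L\int_\Omega|v_n|^{q\zeta}+o(1)$. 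Letting $n\to\infty$, using $\int_\Omega|v_n|^{q\zeta}\to\int_\Omega|v|^{q\zeta}>0$ and then choosing $L$ large, forces $\mathcal{E}(u_n)\to-\infty$, the desired contradiction.

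The case $v=0$ is where the constraint is indispensable, and I regard it as the main obstacle, since on the whole space $\mathcal{E}$ is \emph{not} coercive (the superlinear terms dominate the kinetic part of order $q\zeta$). The key is the maximality property in Proposition~\ref{proposition_nodal_unique-pair}: as $u_n\in\mathcal{C}$, choosing $t_1=t_2=R/\|u_n\|$ gives, for each fixed $R\ge 1$, the bound $\mathcal{E}(Rv_n)=\mathcal{E}(\tfrac{R}{\|u_n\|}u_n^+-\tfrac{R}{\|u_n\|}u_n^-)\le\mathcal{E}(u_n)\le M$. On the other hand, $v_n\to 0$ strongly in $L^{r_1}(\Omega)$ and $L^{r_2}(\partial\Omega)$ makes $\int_\Omega F(x,Rv_n)\to0$ and $\int_{\partial\Omega}G(x,Rv_n)\to0$ via \eqref{H3}\eqref{H3i}, while $\rho(v_n)=1$ and $p<q$ give $\Xi(Rv_n)\ge \tfrac{R^p}{q}$ and hence $\Phi[\Xi(Rv_n)]\ge\tfrac{b}{\zeta}(R^p/q)^\zeta$. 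Taking $\liminf_n$ therefore produces $\liminf_n\mathcal{E}(Rv_n)\ge\tfrac{b}{\zeta}(R^p/q)^\zeta$, and choosing $R$ so large that the right-hand side exceeds $M$ contradicts $\mathcal{E}(Rv_n)\le M$.

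Since both cases are impossible, no such sequence exists and $\mathcal{E}|_{\mathcal{C}}$ is sequentially coercive. The only delicate points are the uniform superlinear lower bounds for $F$ and $G$ and the careful use of the maximality relation from Proposition~\ref{proposition_nodal_unique-pair}, which here replaces the usual Nehari-manifold identity that is unavailable because of the nonlocal Kirchhoff term; compare \eqref{splitting-energy-functional}.
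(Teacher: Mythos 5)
Your proof is correct and follows essentially the same route as the paper: normalize the sequence, split on whether the weak limit of $v_n=u_n/\|u_n\|$ vanishes, rule out a nonzero limit via superlinearity together with $\omega>0$ from Proposition \ref{proposition_positive_infimum_1}, and in the vanishing case invoke the maximality property of Proposition \ref{proposition_nodal_unique-pair} with $t_1=t_2=R/\|u_n\|$ and let $R\to\infty$. The only cosmetic difference is that in the nonvanishing case you use the uniform lower bounds $F(x,s)\geq L|s|^{q\zeta}-C_L$, $G(x,s)\geq L|s|^{q\zeta}-C_L$ and strong convergence in $L^{q\zeta}(\Omega)$, whereas the paper applies Fatou's lemma on the sets where $y>0$ resp.\ $y<0$; both are valid under \eqref{superlinear-F}.
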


\begin{proof}
	Let $\{u_n\}_{n\in\N}\subseteq \mathcal{C}$ be a sequence satisfying
	\begin{align}\label{new-11}
		\|u_n\|\to+\infty \quad\text{as }n\to\infty
	\end{align}
	and let $y_n=\frac{u_n}{\|u_n\|}$. Then we have
	\begin{equation}
		\label{G16}
		\begin{aligned}
			 & y_n \rightharpoonup y \quad\text{in } W^{1,\mathcal{H}}(\Omega),\\
			 &y_n\to y \quad\text{in }L^{r_1}(\Omega) \text{ and a.e.\,in }\Omega, \\
			 &y_n\to y \quad\text{in }L^{r_2}(\partial\Omega) \text{ and a.e.\,in }\partial\Omega, \\
			 & y_n^{\pm} \rightharpoonup  y^{\pm} \quad\text{in } W^{1,\mathcal{H}}(\Omega),\\
			 &y_n^{\pm}\to y^{\pm} \quad\text{in }L^{r_1}(\Omega)\text{ and a.e.\,in }\Omega,\\
			 &y_n^{\pm}\to y^{\pm} \quad\text{in }L^{r_2}(\partial\Omega)\text{ and a.e.\,in }\partial\Omega,
		\end{aligned}
	\end{equation}
	for some $y=y^+-y^-\in  W^{1,\mathcal{H}}(\Omega)$. Let us suppose first that $y\neq 0$. Applying Proposition \ref{proposition_modular_properties2}(iv) and supposing $\|u_n\| > 1$ yields
	\begin{equation}
		\label{n14}
		\begin{aligned}
			\mathcal{E}(u_n)
			&=a\l(\frac{1}{p}\l\| u_n\r\|_{1,p}^p+\frac{1}{q}\l\|\nabla u_n\r\|_{q,\mu}^q\r)\\
			&\quad +\frac{b}{\zeta}\l(\frac{1}{p}\l\| u_n\r\|_{1,p}^p+\frac{1}{q}\l\|\nabla u_n\r\|_{q,\mu}^q\r)^\zeta\\
			& \quad -\int_\Omega  F(x,u_n)\,\mathrm{d} x-\int_{\partial \Omega}  G(x,u_n)\,\mathrm{d} \sigma\\
			&\leq \frac{a}{p}\|u_n\|^q+\frac{b}{\zeta p^\zeta}\|u_n\|^{q\zeta} -\int_\Omega  F(x,u_n)\,\mathrm{d} x-\int_{\partial \Omega}  G(x,u_n)\,\mathrm{d} \sigma.
		\end{aligned}
	\end{equation}

	Let
	\begin{align*}
		&\Omega^y_+=\left\{x\in \Omega \colon y(x)>0 \right\}
		\quad\text{and}\quad
		\Omega^y_-=\left\{x\in \Omega \colon y(x)<0 \right\}\\
		&\Gamma^y_+=\left\{x\in \partial\Omega \colon y(x)>0 \right\}
		\quad\text{and}\quad
		\Gamma^y_-=\left\{x\in \partial\Omega \colon y(x)<0 \right\}.
	\end{align*}
	At least one of $\Omega^y_+$ and $\Omega^y_-$ has a positive measure. Suppose $|\Omega^y_+|_N>0$. Then we have, because of \eqref{G16}, that
	\begin{align*}
		u_n^+(x) \to +\infty \quad \text{for a.a.\,}x\in\Omega^y_+.
	\end{align*}
	Using Fatou's Lemma, \eqref{new-11} and   \eqref{superlinear-F} we obtain
	\begin{align}
		\label{c10}
		\int_{\Omega^y_+} \frac{F(x,u_n^+)}{\l\|u_n^+\r\|^{q\vartheta}}\,\mathrm{d} x\to +\infty.
	\end{align}
	Taking the assumptions \eqref{H3}\eqref{H3i} and \eqref{H3}\eqref{H3ii} into account, we can find a positive constant $\hat{C}$ such that
	\begin{align}
		\label{new-12}
		F(x,s) \geq -\hat{C}  \quad \text{for a.a.\,}x\in\Omega \text{ and for all }s\in\R.
	\end{align}
	Using \eqref{new-12} it follows that
	\begin{align*}
		\int_{\Omega} \frac{F\l(x,u_n^+\r)}{\l\|u_n^+\r\|^{q\vartheta}}\,\mathrm{d} x
		& = \int_{\Omega^y_+} \frac{F\l(x,u_n^+\r)}{\l\|u_n^+\r\|^{q\vartheta}}\,\mathrm{d} x+\int_{\Omega\setminus \Omega^y_+} \frac{F\l(x,u_n^+\r)}{\l\|u_n^+\r\|^{q\vartheta}}\,\mathrm{d} x \\
		& \geq \int_{\Omega^y_+} \frac{F\l(x,u_n^+\r)}{\l\|u_n^+\r\|^{q\vartheta}}\,\mathrm{d} x - \frac{\hat{C}}{\l\|u_n^+\r\|^{q\vartheta}}|\Omega|_N.
	\end{align*}
	Combining this with \eqref{new-11} and \eqref{c10} leads to
	\begin{align}
		\label{c15}
		\int_{\Omega} \frac{F\l(x,u_n^+\r)}{\l\|u_n^+\r\|^{q\vartheta}}\,\mathrm{d} x\to +\infty.
	\end{align}
	If the measures of $\Omega^y_-$ and of $\Gamma^y_+, \Gamma^y_-$ are positive, we get the same as in \eqref{c10} for these measures while for $\Gamma^y_+, \Gamma^y_-$ we take the  Hausdorff surface measure. If one of these sets does not have a positive measure, then the corresponding limit is nonnegative as one can see from the treatment above. Altogether, we have
	\begin{equation}\label{new-13}
		\begin{aligned}
			\int_{\Omega} \frac{F\l(x,-u_n^-\r)}{\l\|u_n^-\r\|^{q\vartheta}}\,\mathrm{d} x\to +\infty
			\quad&\text{or}\quad
			\int_{\Omega} \frac{F\l(x,-u_n^-\r)}{\l\|u_n^-\r\|^{q\vartheta}}\,\mathrm{d} x\to C_1\geq 0\\
			\int_{\partial\Omega} \frac{G\l(x,u_n^+\r)}{\l\|u_n^+\r\|^{q\vartheta}}\,\mathrm{d} \sigma\to +\infty
			\quad&\text{or}\quad
			\int_{\partial\Omega} \frac{G\l(x,u_n^+\r)}{\l\|u_n^+\r\|^{q\vartheta}}\,\mathrm{d} \sigma\to C_2\geq 0\\
			\int_{\partial\Omega} \frac{G\l(x,-u_n^-\r)}{\l\|u_n^-\r\|^{q\vartheta}}\,\mathrm{d} \sigma\to +\infty
			\quad&\text{or}\quad
			\int_{\partial\Omega} \frac{G\l(x,-u_n^-\r)}{\l\|u_n^-\r\|^{q\vartheta}}\,\mathrm{d} \sigma\to C_3\geq 0.
	\end{aligned}
	\end{equation}
	Then, dividing \eqref{n14} by $\|u_n\|^{q\zeta}$, passing to the limit as $n\to \infty$ and applying \eqref{c15} as well as \eqref{new-13} it follows that $\frac{\mathcal{E}(u_n)}{\|u_n\|^{q\zeta}}\to -\infty$, a contradiction to $\mathcal{E}(u_n)\geq \omega>0$ for all $n \in \N$, see Proposition \ref{proposition_positive_infimum_1}. Therefore, $y=0$ and so $y^+=y^-=0$. Recall again that $u_n \in \mathcal{C}$. Using this fact along with Proposition \ref{proposition_nodal_unique-pair} and \eqref{G16}, we have for every pair $(t_1,t_2)\in (0,\infty)\times(0,\infty)$ with $0<t_1\leq t_2$ that
	\begin{align*}
		\mathcal{E}(u_n)
		 & \geq \mathcal{E}(t_1 y_n^+-t_2y_n^-)  \\
		 & =a\l(\frac{1}{p}\l\|t_1y_n^+-t_2y_n^-\r\|_{1,p}^p+\frac{1}{q}\l\|\nabla \l(t_1y_n^+-t_2y_n^-\r)\r\|_{q,\mu}^q\r) \\
		 & \quad +\frac{b}{\zeta}\l(\frac{1}{p}\l\|t_1y_n^+-t_2y_n^-\r\|_{1,p}^p+\frac{1}{q}\l\|\nabla \l(t_1y_n^+-t_2y_n^-\r)\r\|_{q,\mu}^q\r)^\zeta \\
		 & \quad -\int_\Omega  F\l(x,t_1y_n^+-t_2y_n^-\r)\,\mathrm{d} x
		 -\int_{\partial \Omega}  G\l(x,t_1y_n^+-t_2y_n^-\r)\,\mathrm{d} \sigma \\
		 & \geq \frac{b}{q^\zeta \zeta}\min \l\{t_1^{p\zeta},t_1^{q\zeta}\r\} \left[ \rho\l(y_n\r) \right] ^\zeta
		-\int_\Omega  F\l(x,t_1y_n^+\r)\,\mathrm{d} x
		-\int_\Omega  F\l(x,-t_2y_n^-\r)\,\mathrm{d} x\\
		&\quad -\int_{\partial \Omega}  G\l(x,t_1y_n^+\r)\,\mathrm{d} \sigma -\int_{\partial \Omega}  G\l(x,-t_2y_n^-\r)\,\mathrm{d} \sigma\to \frac{b}{q^\zeta \zeta}\min \l\{t_1^{p\zeta},t_1^{q\zeta}\r\},
	\end{align*}
	where we have used Proposition \ref{proposition_modular_properties2}(ii) in the last step since $\|y_n\|=1$ which is equivalent to $\rho\l(y_n\r)=1$. Hence, for any given $K>0$ we take $t_1>0$ large enough and then for $n \geq n_0=n_0(t_1)$ we have that $\mathcal{E}(u_n)>K$.
\end{proof}

\begin{proposition}
	\label{proposition_bound_elements_nehari}
	Let hypotheses \eqref{H1}--\eqref{H3} be satisfied. Then there exists $\Psi>0$ such that $\|u^{\pm}\|\geq \Psi>0$ for all $u\in\mathcal{C}$.
\end{proposition}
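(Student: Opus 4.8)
The plan is to read off the membership $u\in\mathcal{C}$ as the Nehari-type identity $\l\langle\mathcal{E}'(u),u^+\r\rangle=0$, that is,
\begin{equation*}
	\phi(\Xi(u))\,\rho(u^+)=\int_\Omega f(x,u^+)u^+\,\mathrm{d}x+\int_{\partial\Omega}g(x,u^+)u^+\,\mathrm{d}\sigma,
\end{equation*}
with $\rho(u^+)=\|u^+\|_{1,p}^p+\|\nabla u^+\|_{q,\mu}^q$, and to estimate both sides in terms of $\rho(u^+)$. For the left-hand side I would discard the negative part and use $p<q$ from \eqref{H1} to get $\Xi(u)\ge\tfrac1q\rho(u^+)$; since $\phi(s)=a+bs^{\zeta-1}\ge bs^{\zeta-1}$ (recall $a\ge0$), this yields the uniform bound $\phi(\Xi(u))\,\rho(u^+)\ge \frac{b}{q^{\zeta-1}}\rho(u^+)^{\zeta}$. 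Crucially this holds in the degenerate case $a=0$ as well, so no case distinction in $a$ is needed.

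For the right-hand side I would combine \eqref{H3}\eqref{H3iii} and \eqref{H3}\eqref{H3i} into a pointwise estimate $f(x,s)s\le \varepsilon|s|^{p\zeta}+C_\varepsilon|s|^{r_1}$ (and the analogue for $g$ with $r_2$), valid for all $s$ and each fixed $\varepsilon>0$, and then integrate. The decisive point — and the main obstacle — is how to treat the $\varepsilon$-terms. A bare Sobolev estimate $\|u^+\|_{p\zeta}\le C\|u^+\|$ produces $\varepsilon\|u^+\|^{p\zeta}$, whose exponent $p\zeta$ is strictly smaller than the $q\zeta$ governing the left-hand side; near the origin this term would dominate and the argument would collapse. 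Instead I would route these terms through the $W^{1,p}$-norm: since $p\zeta<q\zeta<p_*<p^*$, Proposition \ref{proposition_embeddings} gives $\|u^+\|_{p\zeta}\le C_\Omega\|u^+\|_{1,p}$ and $\|u^+\|_{p\zeta,\partial\Omega}\le C_{\partial\Omega}\|u^+\|_{1,p}$, and then $\|u^+\|_{1,p}^{p\zeta}=(\|u^+\|_{1,p}^p)^{\zeta}\le\rho(u^+)^{\zeta}$. Thus every $\varepsilon$-term is bounded by a constant times $\varepsilon\,\rho(u^+)^{\zeta}$, matching exactly the homogeneity of the left-hand side.

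Choosing $\varepsilon$ small enough to absorb these into the left-hand side then leaves, using $W^{1,\mathcal H}(\Omega)\hookrightarrow L^{r_1}(\Omega)$ and $W^{1,\mathcal H}(\Omega)\hookrightarrow L^{r_2}(\partial\Omega)$,
\begin{equation*}
	\frac{b}{2q^{\zeta-1}}\,\rho(u^+)^{\zeta}\le C_\varepsilon\l(\l(C_\Omega^{\mathcal H}\r)^{r_1}\|u^+\|^{r_1}+\l(C_{\partial\Omega}^{\mathcal H}\r)^{r_2}\|u^+\|^{r_2}\r).
\end{equation*}
If $\|u^+\|\ge1$ there is nothing to prove, so I assume $\|u^+\|<1$; then Proposition \ref{proposition_modular_properties2}(iii) gives $\rho(u^+)^{\zeta}\ge\|u^+\|^{q\zeta}$, and dividing by $\|u^+\|^{q\zeta}>0$ produces
\begin{equation*}
	\frac{b}{2q^{\zeta-1}}\le C\l(\|u^+\|^{r_1-q\zeta}+\|u^+\|^{r_2-q\zeta}\r).
\end{equation*}

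Since $q\zeta<\min\{r_1,r_2\}$ by Remark \ref{remark-H2}, the right-hand side tends to $0$ as $\|u^+\|\to0^+$; hence there is $\Psi_+\in(0,1)$ below which this inequality fails, forcing $\|u^+\|\ge\Psi_+$ for every $u\in\mathcal{C}$. Running the identical computation with $\l\langle\mathcal{E}'(u),-u^-\r\rangle=0$ and $\Xi(u)\ge\tfrac1q\rho(u^-)$ gives a corresponding bound $\Psi_->0$ for $u^-$, and $\Psi=\min\{\Psi_+,\Psi_-\}$ finishes the proof. In summary, the only genuine difficulty is the exponent gap between the $p\zeta$-rate at which the nonlinearities vanish near zero and the $q\zeta$-rate of coercivity supplied by the Kirchhoff term; the resolution is to estimate the $L^{p\zeta}$-mass via $\|u^+\|_{1,p}^{p\zeta}\le\rho(u^+)^{\zeta}$, which upgrades the $\varepsilon$-term to the correct $\zeta$-homogeneity and permits the absorption.
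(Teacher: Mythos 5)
Your argument is correct and follows essentially the same route as the paper: test with $u^{\pm}$, bound the Kirchhoff term below by $\frac{b}{q^{\zeta-1}}\rho(u^{\pm})^{\zeta}$, split the nonlinearities via \eqref{H3}\eqref{H3i} and \eqref{H3}\eqref{H3iii}, absorb the $\varepsilon|s|^{p\zeta}$-contributions through the $W^{1,p}$-embeddings using $\|u^{\pm}\|_{1,p}^{p\zeta}\le\rho(u^{\pm})^{\zeta}$, and conclude from $\rho(u^{\pm})^{\zeta}\ge\|u^{\pm}\|^{q\zeta}$ together with $q\zeta<\min\{r_1,r_2\}$. The only cosmetic deviation is that you absorb the $\varepsilon$-terms directly against $\rho(u^{\pm})^{\zeta}$ while the paper absorbs them against the $\|u^{\pm}\|_{1,p}^{p\zeta}$ component and then recombines via $2^{1-\zeta}(s+t)^{\zeta}\le s^{\zeta}+t^{\zeta}$; both are equivalent.
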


\begin{proof}
	Let $u\in \mathcal{C}$ such that $\|u^{\pm}\| <1$.  From the definition of $\mathcal{C}$ we know that
	\begin{equation}
		\label{prop_bound_1}
		\begin{aligned}
			 & \left(a+b\left(\frac{1}{p}\left\|u^+-u^-\right\|_{1,p}^p+\frac{1}{q}\left\|\nabla \left(u^+-u^-\right)\right\|_{q,\mu}^q\right)^{\zeta-1}\right) \\
			 & \quad\times\left(\left\|u^+ \right\|_{1,p}^p+\left\|\nabla u^+ \right\|_{q,\mu}^q\right)\\
			 & = \int_\Omega f(x,u^+)u^+ \,\mathrm{d} x+\int_{\partial \Omega} g(x,u^+)u^+ \,\mathrm{d} \sigma.
		\end{aligned}
	\end{equation}
	With regard to \eqref{H3}\eqref{H3i} and \eqref{H3}\eqref{H3iii}, for a given $\varepsilon>0$, there exist $C_{1,\varepsilon}, C_{2,\varepsilon}>0$ such that
	\begin{equation}\label{prop_bound_3}
		\begin{aligned}
			|f(x,s)| &\leq \varepsilon |s|^{p\zeta-1} +C_{1,\varepsilon}|s|^{r_1-1}\quad\text{for a.a.\,}x\in\Omega, \\
			|g(x,s)| &\leq \varepsilon |s|^{p\zeta-1} +C_{2,\varepsilon}|s|^{r_2-1}\quad\text{for a.a.\,}x\in\partial\Omega
		\end{aligned}
	\end{equation}
	and for all $s\in \R$. Applying \eqref{prop_bound_3} in \eqref{prop_bound_1} together  with the continuous embeddings $W^{1,p}(\Omega)\to L^{p\zeta}(\Omega)$, $ W^{1, \mathcal{H} } ( \Omega ) \to L^{r_1}(\Omega)$, $W^{1,p}(\Omega)\to L^{p\zeta}(\partial\Omega)$ and $ W^{1, \mathcal{H} }( \Omega ) \to L^{r_2}(\partial\Omega)$ and related embedding constants $C_\Omega$, $C_\Omega^{\mathcal{H}}$, $C_{\partial \Omega}$, $C_{\partial \Omega}^{\mathcal{H}}$, respectively, we get
	\begin{align*}
		&\frac{b}{q^{\zeta-1}}\left(\left\|u^+ \right\|_{1,p}^{p\zeta}+\left\|\nabla u^+ \right\|_{q,\mu}^{q \zeta}\right)\\
		&\leq \frac{b}{q^{\zeta-1}}\left(\left\|u^+ \right\|_{1,p}^{p}+\left\|\nabla u^+ \right\|_{q,\mu}^{q}\right)^\zeta\\
		&\leq \varepsilon \left\|u^+\right\|_{p\zeta}^{p\zeta}+C_{1,\varepsilon} \left\|u^+\right\|_{r_1}^{r_1}+\varepsilon \left\|u^+\right\|_{p\zeta,\partial\Omega}^{p\zeta}+C_{2,\varepsilon} \left\|u^+\right\|_{r_1,\partial\Omega}^{r_1}\\
		&\leq \varepsilon \l(C_\Omega^{p\zeta}+C_{\partial \Omega}^{p\zeta}\r) \left\| u^+\right\|_{1,p}^{p\zeta}+C_{1,\varepsilon} \l(C_\Omega^{\mathcal{H}}\r)^{r_1}\left\|u^+\right\|^{r_1}+C_{2,\varepsilon} \l(C_{\partial \Omega}^{\mathcal{H}}\r)^{r_2}\left\|u^+\right\|^{r_2}.
	\end{align*}
	Now we choose $\varepsilon \in \left( 0,\frac{b}{q^{\zeta-1}\l(C_\Omega^{p\zeta}+C_{\partial \Omega}^{p\zeta}\r)}\right)$ and apply the inequality $2^{1-\zeta}(s+t)^\zeta \leq s^\zeta + t^\zeta$ for all $s,t \geq 0$ as well as Proposition \ref{proposition_modular_properties2}(iii) which results in
	\begin{align*}
		\Psi \left\|u^+\right\|^{q\zeta}
		\leq \Psi \left[ \rho(u^+) \right] ^\zeta
		\leq C_1\left\|u^+\right\|^{r_1}+C_2\left\|u^+\right\|^{r_2}
	\end{align*}
	for some $\Psi, C_1, C_2>0$. Since $q\zeta<\min\{r_1,r_2\}$, see Remark \ref{remark-H2}, the result follows for $u^+$. A similar treatment can be done for the  assertion for $u^-$ which finishes the proof of the proposition.
\end{proof}

Next we are able to prove that the infimum of $\mathcal{E}$ restricted to the constraint set $\mathcal{C}$ is achieved.

\begin{proposition}
	\label{proposition_infimum_achieved}
	Let hypotheses \eqref{H1}--\eqref{H3} be satisfied. Then there exists $y_0\in\mathcal{C}$ such that $\mathcal{E}(y_0)=\omega$.
\end{proposition}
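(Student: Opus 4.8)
The plan is to run the direct method on a minimizing sequence and then project the weak limit back into $\mathcal{C}$ by means of the fibering pair from Proposition \ref{proposition_nodal_unique-pair}.

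First I would choose a minimizing sequence $\{u_n\}_{n\in\N}\subseteq\mathcal{C}$ with $\mathcal{E}(u_n)\to\omega$. Since $\omega$ is finite (Proposition \ref{proposition_positive_infimum_1}) while $\mathcal{E}|_{\mathcal{C}}$ is sequentially coercive (Proposition \ref{proposition_nodal_positive-infimum}), the sequence $\{u_n\}$ must be bounded in $W^{1,\mathcal{H}}(\Omega)$, for an unbounded subsequence would force $\mathcal{E}(u_n)\to+\infty$, contradicting $\mathcal{E}(u_n)\to\omega$. By reflexivity of $W^{1,\mathcal{H}}(\Omega)$ and the compact embeddings of Proposition \ref{proposition_embeddings} (note $r_1<p^*$, $r_2<p_*$ and $p\zeta<q\zeta<p_*<p^*$ by \eqref{H1} and \eqref{H2}), I would pass to a subsequence, still denoted $\{u_n\}$, such that $u_n\rightharpoonup w$ and $u_n^{\pm}\rightharpoonup w^{\pm}$ in $W^{1,\mathcal{H}}(\Omega)$, while $u_n^{\pm}\to w^{\pm}$ strongly in $L^{r_1}(\Omega)$, $L^{r_2}(\partial\Omega)$, $L^{p\zeta}(\Omega)$, $L^{p\zeta}(\partial\Omega)$ and a.e.\ in $\Omega$ and on $\partial\Omega$, for some $w=w^+-w^-\in W^{1,\mathcal{H}}(\Omega)$.

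The crucial step, which I expect to be the main obstacle, is to show that $w^+\neq 0$ and $w^-\neq 0$, so that Proposition \ref{proposition_nodal_unique-pair} applies to $w$. Suppose to the contrary that $w^+=0$. Since $u_n\in\mathcal{C}$, the identity \eqref{prop_bound_1} holds for $u_n^+$, and its right-hand side is controlled through the growth estimate \eqref{prop_bound_3} together with the strong convergences $u_n^+\to 0$ in $L^{p\zeta}(\Omega)$, $L^{r_1}(\Omega)$, $L^{p\zeta}(\partial\Omega)$, $L^{r_2}(\partial\Omega)$, whence that right-hand side tends to $0$. On the other hand, Proposition \ref{proposition_bound_elements_nehari} gives $\|u_n^+\|\geq\Psi>0$, so by Proposition \ref{proposition_modular_properties2} we have $\rho(u_n^+)\geq c_0>0$; since $\|u_n^+\|_{1,p}^p+\|\nabla u_n^+\|_{q,\mu}^q=\rho(u_n^+)$ and $\Xi(u_n)\geq\tfrac{1}{q}\rho(u_n)\geq\tfrac{1}{q}\rho(u_n^+)\geq\tfrac{c_0}{q}$, the left-hand side of \eqref{prop_bound_1} stays bounded below by a positive constant for all $n$ (both when $a>0$ and in the degenerate case $a=0$, using $b>0$). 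This contradiction forces $w^+\neq 0$, and the symmetric argument gives $w^-\neq 0$.

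Finally, with $w^{\pm}\neq 0$ in hand, Proposition \ref{proposition_nodal_unique-pair} furnishes a unique pair $(\alpha,\beta)\in(0,\infty)\times(0,\infty)$ with $y_0:=\alpha w^+-\beta w^-\in\mathcal{C}$, whence $\mathcal{E}(y_0)\geq\omega$. For the reverse inequality I would use the sequential weak lower semicontinuity of $\mathcal{E}$: the map $u\mapsto\Phi[\Xi(u)]$ is convex and continuous, hence weakly lower semicontinuous, while $u\mapsto\int_\Omega F(x,u)\,\mathrm{d}x$ and $u\mapsto\int_{\partial\Omega}G(x,u)\,\mathrm{d}\sigma$ are weakly continuous by the subcritical growth \eqref{H3}\eqref{H3i} and the compact embeddings. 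Because $\alpha u_n^+-\beta u_n^-\rightharpoonup y_0$, this yields $\mathcal{E}(y_0)\leq\liminf_{n\to\infty}\mathcal{E}(\alpha u_n^+-\beta u_n^-)$. Since each $u_n\in\mathcal{C}$, the maximality statement in Proposition \ref{proposition_nodal_unique-pair} gives $\mathcal{E}(\alpha u_n^+-\beta u_n^-)\leq\mathcal{E}(u_n)$, so $\mathcal{E}(y_0)\leq\liminf_{n\to\infty}\mathcal{E}(u_n)=\omega$. Combining the two inequalities yields $\mathcal{E}(y_0)=\omega$ with $y_0\in\mathcal{C}$, which completes the proof.
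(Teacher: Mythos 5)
Your proposal is correct, and the first three quarters of it (minimizing sequence, boundedness via the sequential coercivity of Proposition \ref{proposition_nodal_positive-infimum}, extraction of weakly/strongly convergent subsequences, and the contradiction argument showing $w^{\pm}\neq 0$ by playing the vanishing right-hand side of \eqref{prop_bound_1} against the lower bound $\|u_n^{\pm}\|\geq\Psi$ from Proposition \ref{proposition_bound_elements_nehari}) coincide with the paper's proof. Where you genuinely diverge is the final identification $\mathcal{E}(y_0)=\omega$. The paper does not argue via weak lower semicontinuity of $\mathcal{E}$; instead it first shows, using weak lower semicontinuity of $\|\cdot\|_{1,p}$ and $\|\cdot\|_{q,\mu}$, that $\langle \mathcal{E}'(w),\pm w^{\pm}\rangle\leq 0$, invokes Proposition \ref{proposition_nodal_unique-pair-less-one} to place the fibering pair in $(0,1]^2$, and then runs a long estimate on the quantity $\mathcal{E}(v)-\frac{1}{q\zeta}\langle\mathcal{E}'(v),v\rangle$ at $v=\alpha_{w}w^+-\beta_{w}w^-$, using the monotonicity hypothesis \eqref{H3}\eqref{H3iv}, to conclude that in fact $\alpha_{w}=\beta_{w}=1$. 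Your route instead observes that $\Phi\circ\Xi$ is convex (since $\Xi$ is convex and $\Phi$ is convex and nondecreasing, as $a\geq 0$, $b>0$, $\zeta\geq 1$) and strongly continuous, hence weakly lower semicontinuous, while the $F$- and $G$-integrals are weakly continuous by subcritical growth and the compact embeddings of Proposition \ref{proposition_embeddings}; combined with the maximality statement of Proposition \ref{proposition_nodal_unique-pair} applied to each $u_n\in\mathcal{C}$, this closes the argument in two lines. Your version is shorter and, notably, does not use Proposition \ref{proposition_nodal_unique-pair-less-one} or hypothesis \eqref{H3}\eqref{H3iv} at this stage at all; the paper's version is the device standard in the Kirchhoff literature, remains applicable when the nonlocal energy fails to be convex, and yields the additional information that the weak limit $w=w^+-w^-$ itself, rather than an a priori nontrivial rescaling $\alpha w^+-\beta w^-$, is the minimizer. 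For the statement as formulated either conclusion suffices.
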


\begin{proof}
	Let $\{y_n\}_{n\in\N}\subseteq \mathcal{C}$ be a minimizing sequence of $\mathcal{E}$ over $\mathcal{C}$, that is,
	\begin{align*}
		\mathcal{E}(y_n) \searrow \omega.
	\end{align*}
	From Proposition \ref{proposition_nodal_positive-infimum} it is clear that the sequence $\{y_n\}_{n\in\N}$ is bounded in $W^{1,\mathcal{H}}(\Omega)$. This implies that $\{y_n^+\}_{n\in\N}, \{y_n^-\}_{n\in\N}$ are bounded in $W^{1,\mathcal{H}}(\Omega)$ as well (see Proposition \ref{proposition_modular_properties2} \textnormal{(vi)}). Then we can assume, for not relabeled subsequences, that
	\begin{equation}
		\label{prop_inf_achieved_1}
		\begin{aligned}
			y_n^+ &\rightharpoonup y_0^+ \quad\text{in }W^{1,\mathcal{H}}(\Omega),  \quad y_0^+ \geq 0, \\
			y_n^- &\rightharpoonup y_0^- \quad\text{in }W^{1,\mathcal{H}}(\Omega),  \quad y_0^- \geq 0, \\
			y_n^+&\to y_0^+ \quad\text{in }L^{r_1}(\Omega), L^{r_2}(\partial\Omega), \text{ a.e.\,in }\Omega \text{ and a.e.\,in } \partial\Omega, \\
			y_n^-&\to y_0^- \quad\text{in }L^{r_1}(\Omega), L^{r_2}(\partial\Omega), \text{ a.e.\,in }\Omega \text{ and a.e.\,in } \partial\Omega.
		\end{aligned}
	\end{equation}
	From \eqref{prop_inf_achieved_1} and assumption \eqref{H3}\eqref{H3i} we know that, for all $\alpha, \beta >0$,
	\begin{equation}
		\label{prop_inf_achieved_2}
		\begin{aligned}
			\int_\Omega f(x,\alpha y_n^+)\alpha y_n^+\,\mathrm{d} x
			 & \to \int_\Omega f(x,\alpha y_0^+)\alpha y_0^+\,\mathrm{d}x, \\
			 \int_{\partial \Omega} g(x,\alpha y_n^+)\alpha y_n^+\,\mathrm{d} \sigma
			 & \to \int_{\partial \Omega} g(x,\alpha y_0^+)\alpha y_0^+\,\mathrm{d}\sigma, \\
			\int_\Omega f(x,-\beta y_n^-)(-\beta y_n^-)\,\mathrm{d} x
			 & \to \int_\Omega f(x,-\beta y_0^-)(-\beta y_0^-)\,\mathrm{d}x,\\
			\int_{\partial \Omega} g(x,-\beta y_n^-)(-\beta y_n^-)\,\mathrm{d} \sigma
			& \to \int_{\partial \Omega} g(x,-\beta y_0^-)(-\beta y_0^-)\,\mathrm{d}\sigma
		\end{aligned}
	\end{equation}
	as $n\to \infty$ and also
	\begin{equation}
		\label{prop_inf_achieved_3}
		\begin{aligned}
			\int_\Omega F(x,\alpha y_n^+)\,\mathrm{d} x
			 & \to \int_\Omega F(x,\alpha y_0^+)\,\mathrm{d}x,  \\
			 \int_{\partial \Omega} G(x,\alpha y_n^+)\,\mathrm{d} \sigma
			 & \to \int_{\partial \Omega} G(x,\alpha y_0^+)\,\mathrm{d}\sigma,  \\
			\int_\Omega F(x,-\beta y_n^-)\,\mathrm{d} x
			 & \to \int_\Omega F(x,-\beta y_0^-)\,\mathrm{d}x,\\
			 \int_{\partial \Omega} G(x,-\beta y_n^-)\,\mathrm{d} \sigma
			 & \to \int_{\partial \Omega} G(x,-\beta y_0^-)\,\mathrm{d}\sigma
		\end{aligned}
	\end{equation}
	as $n\to \infty$.

	{\bf Claim:} $y_0^+\neq 0 \neq y_0^-$

	We prove the Claim via contradiction and assume that $y_0^+=0$. By assumption, $y_n\in \mathcal{C}$, hence
	\begin{align*}
		0
		& =\l\langle \mathcal{E}'\l(y_n\r), y_n^+\r\rangle \\
		& =\l(a+b\l(\frac{1}{p}\l\| y_n \r\|_{1,p}^p+\frac{1}{q}\l\|\nabla y_n \r\|_{q,\mu}^q\r)^{\zeta-1}\r) \\
		& \quad \times \l(\l\| y_n^+\r\|_{1,p}^p+\l\|\nabla y_n^+ \r\|_{q,\mu}^q\r) -\int_\Omega  f\l(x,y_n^+\r)y_n^+\,\mathrm{d} x  -\int_{\partial \Omega}  g\l(x,y_n^+\r)y_n^+\,\mathrm{d} \sigma\\
		& \geq \frac{b}{q^{\zeta-1}}\l(\l\| y_n^+\r\|_{1,p}^p+\l\|\nabla y_n^+ \r\|_{q,\mu}^q\r)^\zeta-\int_\Omega  f\l(x, y_n^+\r) y_n^+\,\mathrm{d} x-\int_{\partial \Omega}  g\l(x,y_n^+\r)y_n^+\,\mathrm{d} \sigma.
	\end{align*}
	Using the convergence properties in  \eqref{prop_inf_achieved_2} we conclude from the inequality above that
	\begin{align*}
		\frac{b}{q^{\zeta-1}} \left[ \rho(y_n^+) \right] ^\zeta
		\leq \int_\Omega  f\l(x, y_n^+\r) y_n^+\,\mathrm{d} x +\int_{\partial \Omega}  g\l(x,y_n^+\r)y_n^+\,\mathrm{d} \sigma\to 0
	\end{align*}
	as $n\to \infty$. Thus, $\rho(y_n^+)\to 0$ as $n\to\infty$ which is equivalent to  $y_n^+\to 0$ in $W^{1,\mathcal{H}}(\Omega)$, see Proposition \ref{proposition_modular_properties2}(v). On the other hand, we know from Proposition \ref{proposition_bound_elements_nehari} that $\|y_n^+\|\geq \Psi>0$ which is a contradiction. Similarly, we can prove that $y_0^-\neq 0$. The Claim is proved.

	From the Claim and Proposition \ref{proposition_nodal_unique-pair} there exist unique $\alpha_{y_0}, \beta_{y_0}>0$ such that $\alpha_{y_0}y_0^+ -\beta_{y_0}y_0^-\in \mathcal{C}$. Moreover, using the convergence properties in \eqref{prop_inf_achieved_2} along with the weak lower semicontinuity of the norm $\|\cdot\|_{1,p}$ and the seminorm $\|\cdot\|_{q,\mu}$, we obtain
	\begin{align*}
		& \left\langle \mathcal{E}'(y_0),\pm y_0^{\pm} \right\rangle  \\
		& = \l(a+b\l(\frac{1}{p}\l\|y_0 \r\|_{1,p}^p+\frac{1}{q}\l\|\nabla y_0 \r\|_{q,\mu}^q\r)^{\zeta-1}\r) \l(\l\|\pm y_0^{\pm} \r\|_{1,p}^p+\l\|\nabla \left(\pm y_0^{\pm}\right) \r\|_{q,\mu}^q\r) \\
		& \quad -\int_\Omega  f\l(x, \pm y_0^{\pm} \r) \left(\pm y_0^{\pm}\right)\,\mathrm{d} x-\int_{\partial \Omega}  g\l(x, \pm y_0^{\pm} \r) \left(\pm y_0^{\pm}\right)\,\mathrm{d} \sigma  \\
		& \leq \liminf_{n\to \infty}\l(a+b\l(\frac{1}{p}\l\|y_n \r\|_{1,p}^p+\frac{1}{q}\l\|\nabla y_n \r\|_{q,\mu}^q\r)^{\zeta-1}\r)  \\
		& \qquad\qquad\quad\times\l(\l\|\pm y_n^{\pm} \r\|_{1,p}^p+\l\|\nabla \left(\pm y_n^{\pm}\right) \r\|_{q,\mu}^q\r) \\
		& \quad -\lim_{n\to \infty}\int_\Omega  f\l(x, \pm y_n^{\pm} \r) \left(\pm y_n^{\pm}\right)\,\mathrm{d} x-\lim_{n\to \infty}\int_{\partial \Omega}  g\l(x, \pm y_n^{\pm} \r) \left(\pm y_n^{\pm}\right)\,\mathrm{d} \sigma \\
		& =\liminf_{n\to \infty}\left\langle \mathcal{E}'(y_n),\pm y_n^{\pm} \right\rangle=0,
	\end{align*}
	since $y_n \in \mathcal{C}$. This allows us the usage of Proposition \ref{proposition_nodal_unique-pair-less-one} which shows that $ \alpha_{y_0}, \beta_{y_0} \in (0,1]$. From this fact and assumption \eqref{H3}\eqref{H3iv} we conclude that
	\begin{equation}
		\label{prop_inf_achieved_4}
		\begin{aligned}
			 &\frac{1}{q\zeta}f(x,\alpha_{y_0} y_0^+)\alpha_{y_0} y_0^+-F(x,\alpha_{y_0} y_0^+) \leq \frac{1}{q\zeta}f(x,y_0^+) y_0^+-F(x, y_0^+),   \\
			&\frac{1}{q\zeta}f(x,-\beta_{y_0} y_0^-)(-\beta_{y_0} y_0^-)-F(x,-\beta_{y_0} y_0^-)\\
			& \leq \frac{1}{q\zeta}f(x,-y_0^-) (-y_0^-)-F(x, -y_0^-)
		\end{aligned}
	\end{equation}
	for a.a.\,$x\in\Omega$ and
	\begin{equation}
		\label{prop_inf_achieved_4-boundary}
		\begin{aligned}
			&\frac{1}{q\zeta}g(x,\alpha_{y_0} y_0^+)\alpha_{y_0} y_0^+-G(x,\alpha_{y_0} y_0^+) \leq \frac{1}{q\zeta}g(x,y_0^+) y_0^+-G(x, y_0^+),   \\
			&\frac{1}{q\zeta}g(x,-\beta_{y_0} y_0^-)(-\beta_{y_0} y_0^-)-G(x,-\beta_{y_0} y_0^-)\\
			& \leq \frac{1}{q\zeta}g(x,-y_0^-) (-y_0^-)-G(x, -y_0^-)
		\end{aligned}
	\end{equation}
	for a.a.\,$x\in\partial\Omega$. Now, from $\alpha_{y_0}y_0^+ -\beta_{y_0}y_0^-\in \mathcal{C}$, $ \alpha_{y_0}, \beta_{y_0} \in (0,1]$, \eqref{prop_inf_achieved_2}, \eqref{prop_inf_achieved_3}, \eqref{prop_inf_achieved_4}, \eqref{prop_inf_achieved_4-boundary} and $y_n\in\mathcal{C}$ we derive that
	\begin{align*}
		\omega
		 & \leq \mathcal{E}\left(\alpha_{y_0}y_0^+-\beta_{y_0}y_0^-\right)
		-\frac{1}{q\zeta} \left\langle \mathcal{E}'\left(\alpha_{y_0}y_0^+-\beta_{y_0}y_0^-\right),\alpha_{y_0}y_0^+-\beta_{y_0}y_0^- \right\rangle   \\
		 & =a\l(\frac{\alpha_{y_0}^p}{p}\l\| y_0^+\r\|_{1,p}^p+\frac{\beta_{y_0}^p}{p}\l\| y_0^-\r\|_{1,p}^p+\frac{\alpha_{y_0}^q}{q}\l\|\nabla y_0^+\r\|_{q,\mu}^q+\frac{\beta_{y_0}^q}{q}\l\|\nabla y_0^-\r\|_{q,\mu}^q\r)  \\
		 & \quad +\frac{b}{\zeta}\l(\frac{\alpha_{y_0}^p}{p}\l\| y_0^+\r\|_{1,p}^p+\frac{\beta_{y_0}^p}{p}\l\| y_0^-\r\|_{1,p}^p+\frac{\alpha_{y_0}^q}{q}\l\|\nabla y_0^+\r\|_{q,\mu}^q+\frac{\beta_{y_0}^q}{q}\l\|\nabla y_0^-\r\|_{q,\mu}^q\r)^{\zeta} \\
		 & \quad -\int_\Omega  F\l(x,\alpha_{y_0} u^+\r)\,\mathrm{d} x-\int_\Omega  F\l(x,-\beta_{y_0} u^-\r)\,\mathrm{d} x  \\
		 & \quad -\int_{\partial \Omega}  G\l(x,\alpha_{y_0} u^+\r)\,\mathrm{d} \sigma-\int_{\partial\Omega}  G\l(x,-\beta_{y_0} u^-\r)\,\mathrm{d} \sigma  \\
		 & \quad -a\left(\frac{\alpha_{y_0}^p}{q\zeta}\l\| y_0^+\r\|_{1,p}^p+\frac{\beta_{y_0}^p}{q\zeta}\l\| y_0^-\r\|_{1,p}^p+\frac{\alpha_{y_0}^q}{q\zeta}\l\|\nabla y_0^+\r\|_{q,\mu}^q+\frac{\beta_{y_0}^q}{q\zeta}\l\|\nabla y_0^-\r\|_{q,\mu}^q\right)   \\
		 & \quad -\frac{b}{q\zeta}\l(\frac{\alpha_{y_0}^p}{p}\l\| y_0^+\r\|_{1,p}^p+\frac{\beta_{y_0}^p}{ p}\l\| y_0^-\r\|_{1,p}^p+\frac{\alpha_{y_0}^q}{q}\l\|\nabla y_0^+\r\|_{q,\mu}^q+\frac{\beta_{y_0}^q}{ q}\l\|\nabla y_0^-\r\|_{q,\mu}^q\r)^{\zeta-1} \\
		 & \qquad\times\left(\alpha_{y_0}^p\l\| y_0^+\r\|_{1,p}^p+\beta_{y_0}^p\l\| y_0^-\r\|_{1,p}^p+\alpha_{y_0}^q\l\|\nabla y_0^+\r\|_{q,\mu}^q+\beta_{y_0}^q\l\|\nabla y_0^-\r\|_{q,\mu}^q\right)   \\
		 & \quad+\frac{1}{q\zeta}\int_\Omega f\left(x,\alpha_{y_0} y_0^+\right)\alpha_{y_0} y_0^+\,\mathrm{d} x
		+\frac{1}{q\zeta}\int_\Omega f\left(x,-\beta_{y_0} y_0^-\right)(-\beta_{y_0} y_0^-)\,\mathrm{d} x  \\
		& \quad+\frac{1}{q\zeta}\int_{\partial \Omega} g\left(x,\alpha_{y_0} y_0^+\right)\alpha_{y_0} y_0^+\,\mathrm{d} \sigma
		+\frac{1}{q\zeta}\int_{\partial \Omega} g\left(x,-\beta_{y_0} y_0^-\right)(-\beta_{y_0} y_0^-)\,\mathrm{d} \sigma \\
		 & =a \left(\frac{1}{p}-\frac{1}{q\zeta}\right) \left(\alpha_{y_0}^p\l\| y_0^+\r\|_{1,p}^p+\beta_{y_0}^p\l\| y_0^-\r\|_{1,p}^p \right) \\
		 & \quad +a \left(\frac{1}{q}-\frac{1}{q\zeta}\right) \left(\alpha_{y_0}^q\l\|\nabla y_0^+\r\|_{q,\mu}^q+\beta_{y_0}^q\l\|\nabla y_0^-\r\|_{q,\mu}^q \right) \\
		 & \quad +b\l(\frac{\alpha_{y_0}^p}{p}\l\| y_0^+\r\|_{1,p}^p+\frac{\beta_{y_0}^p}{ p}\l\| y_0^-\r\|_{1,p}^p+\frac{\alpha_{y_0}^q}{q}\l\|\nabla y_0^+\r\|_{q,\mu}^q+\frac{\beta_{y_0}^q}{ q}\l\|\nabla y_0^-\r\|_{q,\mu}^q\r)^{\zeta-1}  \\
		 & \quad \times \Bigg [ \left(\frac{1}{p\zeta}-\frac{1}{q\zeta}\right) \left(\alpha_{y_0}^p\l\| y_0^+\r\|_{1,p}^p+\beta_{y_0}^p\l\|y_0^-\r\|_{1,p}^p \right)  \\
		 & \qquad \quad  +\left(\frac{1}{q\zeta}-\frac{1}{q\zeta}\right) \left(\alpha_{y_0}^q\l\|\nabla y_0^+\r\|_{q,\mu}^q+\beta_{y_0}^q\l\|\nabla y_0^-\r\|_{q,\mu}^q \right)\Bigg]  \\
		 & \quad +\int_\Omega \left(\frac{1}{q\zeta}f(x,\alpha_{y_0} y_0^+)\alpha_{y_0} y_0^+-F(x,\alpha_{y_0} y_0^+)\right)\,\mathrm{d} x  \\
		 & \quad +\int_\Omega \left(\frac{1}{q\zeta}f(x,-\beta_{y_0} y_0^-)(-\beta_{y_0} y_0^-)-F(x,-\beta_{y_0} y_0^-)\right)\,\mathrm{d} x   \\
		 & \quad +\int_{\partial \Omega} \left(\frac{1}{q\zeta}g(x,\alpha_{y_0} y_0^+)\alpha_{y_0} y_0^+-G(x,\alpha_{y_0} y_0^+)\right)\,\mathrm{d} \sigma  \\
		 & \quad +\int_{\partial \Omega} \left(\frac{1}{q\zeta}g(x,-\beta_{y_0} y_0^-)(-\beta_{y_0} y_0^-)-G(x,-\beta_{y_0} y_0^-)\right)\,\mathrm{d} \sigma   \\
		 & \leq a \left(\frac{1}{p}-\frac{1}{q\zeta}\right) \left(\l\| y_0^+\r\|_{1,p}^p+\l\|y_0^-\r\|_{1,p}^p \right)    \\
		 & \quad +a \left(\frac{1}{q}-\frac{1}{q\zeta}\right) \left(\l\|\nabla y_0^+\r\|_{q,\mu}^q+\l\|\nabla y_0^-\r\|_{q,\mu}^q \right)   \\
		 & \quad +b\l(\frac{1}{p}\l\|y_0^+\r\|_{1,p}^p+\frac{1}{ p}\l\|y_0^-\r\|_{1,p}^p+\frac{1}{q}\l\|\nabla y_0^+\r\|_{q,\mu}^q+\frac{1}{ q}\l\|\nabla y_0^-\r\|_{q,\mu}^q\r)^{\zeta-1}  \\
		 & \quad \times \Bigg [ \left(\frac{1}{p\zeta}-\frac{1}{q\zeta}\right) \left(\l\| y_0^+\r\|_{1,p}^p+\l\| y_0^-\r\|_{1,p}^p \right)  \\
		 & \qquad \quad  +\left(\frac{1}{q\zeta}-\frac{1}{q\zeta}\right) \left(\l\|\nabla y_0^+\r\|_{q,\mu}^q+\l\|\nabla y_0^-\r\|_{q,\mu}^q \right)\Bigg]  \\
		 & \quad +\int_\Omega \left(\frac{1}{q\zeta}f(x,y_0^+)y_0^+-F(x,y_0^+)\right)\,\mathrm{d} x   \\
		 & \quad
		+\int_\Omega \left(\frac{1}{q\zeta}f(x,-y_0^-)(-y_0^-)-F(x,- y_0^-)\right)\,\mathrm{d} x   \\
		& \quad +\int_{\partial \Omega} \left(\frac{1}{q\zeta}g(x,y_0^+)y_0^+-G(x,y_0^+)\right)\,\mathrm{d} \sigma   \\
		& \quad
		+\int_{\partial \Omega} \left(\frac{1}{q\zeta}g(x,-y_0^-)(-y_0^-)-G(x,- y_0^-)\right)\,\mathrm{d} \sigma   \\
		 & \leq \liminf_{n\to \infty} \left(\mathcal{E}\left(y_n^+-y_n^-\right)
		-\frac{1}{q\zeta} \left\langle \mathcal{E} '\left(y_n^+-y_n^-\right),y_n^+-y_n^- \right\rangle \right)=\omega,
	\end{align*}
	whereby we have used in the last step the weak lower semicontinuity of the norm $\|\cdot\|_{1,p}$ and the seminorm $\|\cdot\|_{q,\mu}$ along with \eqref{prop_inf_achieved_2} as well as \eqref{prop_inf_achieved_3} and then rearrange the terms inside the limit. Note that if we assume $\alpha_{y_0} < 1$ or $\beta_{y_0} < 1$, the inequality above is strict and this is a contradiction. Hence, we conclude that $\alpha_{y_0}=\beta_{y_0}=1$ which implies that the infimum $\omega$ is achieved by the function $y_0^+-y_0^-$.
\end{proof}

Now we are in the position to give the proof of Theorem \ref{theorem_main_result}.

\begin{proof}[Proof of Theorem \ref{theorem_main_result}]
	Let $y_0\in\mathcal{C}$ with $\mathcal{E}(y_0)=\omega$ be the function obtained in Proposition \ref{proposition_infimum_achieved}. We are going to prove that $y_0$ is a critical point of the functional $\mathcal{E}$ which implies that it is a least energy sign-changing solution of problem \eqref{problem}.

	Arguing by contradiction, we assume that $\mathcal{E} '(y_0)\neq 0$. Then we can find two numbers $\lambda, \delta_0 > 0$ such that
	\begin{align*}
		\|\mathcal{E}'(u)\|_{*} \geq \lambda, \quad
		\text{for all } u \in W^{1,\mathcal{H}}(\Omega) \text{ with } \|u-y_0\| < 3 \delta_0.
	\end{align*}
	Denoting by $C_p$ an embedding constant for $W^{1,\mathcal{H}}(\Omega) \to L^{p}(\Omega)$, we have for any function $v \in  W^{1,\mathcal{H}}(\Omega)$
	\begin{align*}
		\|y_0-v\|\geq C_p^{-1} \|y_0-v\|_p
		\geq
		\begin{cases}
			C_p^{-1} \|y_0^-\|_p, & \text{if } v^- = 0,  \\
			C_p^{-1} \|y_0^+\|_p, & \text{if } v^+ = 0,
		\end{cases}
	\end{align*}
	as $y_0^+ \neq 0 \neq y_0^-$. Next we can choose a number $\delta_1$ such that
	\begin{align*}
		\delta_1 \in \left(0,\min \left\{C_p^{-1} \|y_0^-\|_p,C_p^{-1} \|y_0^+\|_p\right\}\right).
	\end{align*}
	This implies that for any $v \in  W^{1,\mathcal{H}}(\Omega)$ with $\|y_0-v\|< \delta_1$ we have $v^+ \neq 0 \neq v^-$. Now we take $\delta = \min \{ \delta_0, \delta_1 / 2\}$. Because the mapping $(\alpha,\beta) \mapsto \alpha y_0^+ - \beta y_0^-$ is continuous from $[0,\infty)\times [0,\infty)$ into $ W^{1,\mathcal{H}}(\Omega)$, we can find $\tau \in (0,1)$ such that for all $\alpha,\beta \geq 0$ with $\max \{|\alpha - 1|, |\beta - 1| \} < \tau$, it holds
	\begin{align*}
		\left\|\alpha y_0^+ - \beta y_0^- - y_0 \right\| < \delta.
	\end{align*}
	Let $D = ( 1 - \tau, 1 + \tau) \times ( 1 - \tau, 1 + \tau)$ and note that, for any $\alpha,\beta  \geq 0$ with $(\alpha,\beta) \neq  (1,1)$, we get that
	\begin{align}
		\label{Eq:ParametersEstimatedByInfimum}
		\mathcal{E}(\alpha y_0^+ - \beta y_0^-)
		<\mathcal{E}(y_0^+ - y_0^-)=\inf_{u \in \mathcal{C}} \mathcal{E}(u),
	\end{align}
	due to Proposition \ref{proposition_nodal_unique-pair}. This implies that
	\begin{align*}
		\varrho = \max_{(\alpha,\beta) \in \partial D} \mathcal{E}(\alpha y_0^+ - \beta y_0^-)
		<\mathcal{E}(y_0^+ - y_0^-)=\inf_{u \in \mathcal{C}} \mathcal{E}(u).
	\end{align*}

	Next we apply Lemma \ref{Le:DeformationLemma} with the choices
	\begin{align*}
		S = B (y_0, \delta), \quad c = \inf_{u \in \mathcal{C}} \mathcal{E}(u), \quad \varepsilon = \min \left\lbrace \frac{c - \varrho}{4}, \frac{\lambda \delta}{8} \right\rbrace
	\end{align*}
	and $\delta$ is a above. It should be mentioned that $S_{2 \delta} = B (y_0, 3 \delta)$ and by the choice of $\varepsilon$, the hypotheses of Lemma \ref{Le:DeformationLemma} are fulfilled, which guarantees that a mapping $\eta$ with the properties given in the lemma exists. From the choice of $\varepsilon$ it follows that
	\begin{align}
		\label{Eq:2epsAtBoundary}
		\mathcal{E}(\alpha y_0^+ - \beta y_0^-)
		\leq \varrho + c - c
		< c - \left( \frac{c - \varrho}{2} \right)
		\leq c - 2 \varepsilon
	\end{align}
	for all $(\alpha,\beta) \in \partial D$.

	Now we introduce the mappings $\Pi\colon [0,\infty)\times [0,\infty)\to  W^{1,\mathcal{H}}(\Omega)$, $\Upsilon\colon [0,\infty)\times [0,\infty)\to\R^2$ defined by
	\begin{align*}
		\Pi(\alpha,\beta)    & =\eta\l(1,\alpha y_0^+-\beta y_0^-\r)   \\
		\Upsilon(\alpha,\beta ) & =\l(\l\langle \mathcal{E}'\l(\Pi(\alpha,\beta)\r),\Pi^+(\alpha,\beta)\r\rangle,\l\langle \mathcal{E}'\l(\Pi(\alpha,\beta)\r),-\Pi^-(\alpha,\beta)\r\rangle\r).
	\end{align*}
	The continuity of $\eta$ implies the continuity of $\Pi$ and the differentiability of $\mathcal{E}$ guarantees that $\Upsilon$ is continuous. Taking Lemma \ref{Le:DeformationLemma} \textnormal{(i)} and \eqref{Eq:2epsAtBoundary} into account, we know that $\Pi(\alpha,\beta) = \alpha y_0^+ - \beta y_0^-$ for all $(\alpha,\beta) \in \partial D$ and
	\begin{align*}
		\Upsilon(\alpha,\beta) = \left( \langle \mathcal{E}'(\alpha y_0^+-\beta y_0^-) , \alpha y_0^+ \rangle , \langle \mathcal{E}'(\alpha y_0^+-\beta y_0^-) , - \beta y_0^- \rangle \right).
	\end{align*}
	From the sign information in (i)--(iv) of Proposition \ref{proposition_nodal_unique-pair}, we derive the componentwise inequalities, for $\Upsilon=(\Upsilon_1,\Upsilon_2)$,
	\begin{align*}
		 & \Upsilon_1 (1 - \tau,t) > 0 > \Upsilon_1 (1 + \tau,t),  \\
		 & \Upsilon_2 (t, 1 - \tau) > 0 > \Upsilon_2 (t,1 + \tau) \quad
		\text{for all } t \in [1 - \tau, 1 + \tau].
	\end{align*}
	Now we are able to apply Lemma \ref{lemma-poincare-miranda} with $\varphi(\alpha,\beta) = - \Upsilon(1 + \alpha, 1 + \beta)$. This yields a pair $(\alpha_0,\beta_0) \in D$ satisfying $\Upsilon(\alpha_0,\beta_0) = 0$, that is,
	\begin{align*}
		\langle \mathcal{E}'(\Pi(\alpha_0 , \beta_0)) , \Pi^+(\alpha_0 , \beta_0) \rangle = 0 = \langle \mathcal{E}'(\Pi(\alpha_0 , \beta_0)) , -\Pi^- (\alpha_0 , \beta_0) \rangle.
	\end{align*}
	 Lemma \ref{Le:DeformationLemma} \textnormal{(iv)} and the choice of $\tau$ leads to
	\begin{align*}
		\left\|\Pi(\alpha_0,\beta_0)-y_0 \right\| \leq 2 \delta \leq \delta_1
	\end{align*}
	and the choice of $\delta_1$ gives us
	\begin{align*}
		\Pi^+(\alpha_0 , \beta_0) \neq 0 \neq - \Pi^-(\alpha_0 , \beta_0).
	\end{align*}
	This means that $\Pi(\alpha_0,\beta_0) \in \mathcal{C}$. However, by Lemma \ref{Le:DeformationLemma} \textnormal{(ii)}, the choice of $\tau$ and \eqref{Eq:ParametersEstimatedByInfimum}, we have that $\mathcal{E}(\Pi(\alpha_0,\beta_0) ) \leq c - \varepsilon$, which is a contradiction. Therefore, $y_0$ turns out to be a critical point of $\mathcal{E}$. The proof is finished.
\end{proof}

\section*{Acknowledgment}

Marcos T.O. Pimenta is partially supported by FAPESP 2023/05300-4, 2023/06617-1 and 2022/16407-1, CNPq 304765/2021-0, Brazil. Marcos T.O. Pimenta thanks the University of Technology Berlin for the kind hospitality during a research stay in February/March 2024. Marcos T.O. Pimenta and Patrick Winkert were financially supported by TU Berlin-FAPESP Mobility Promotion.

\end{document}